\renewcommand{\PrintDOI}[1]{\doi{#1}}
\numberwithin{equation}{section}
\newtheorem{maintheorem}{Theorem}
\newtheorem{theorem}{Theorem}[section]
\newtheorem{lemma}[theorem]{Lemma}
\newtheorem{proposition}[theorem]{Proposition}
\newtheorem{corollary}[theorem]{Corollary}
\theoremstyle{definition}
\newtheorem{definition}[theorem]{Definition}
\theoremstyle{remark}
\newtheorem{remark}[theorem]{Remark}
\newtheorem{example}[theorem]{Example}
\newcommand{\cK}{{\mathcal K}}
\newcommand{\cM}{{\mathcal M}}
\newcommand{\e}{\varepsilon}
\newcommand{\g}{\gamma}
\newcommand{\de}{\delta}
\newcommand{\la}{\lambda}
\newcommand{\olt} {\overline t}
\newcommand{\olz} {\overline z}
\newcommand{\olx} {\overline x}
\newcommand{\La}{\Delta}
\newcommand{\Ga}{\Gamma}
\newcommand{\D}{\nabla}
\newcommand{\ra}{\rightarrow}
\newcommand{\pa}{\partial}
\newcommand{\supp}{\operatorname{supp}}
\newcommand{\dv}{\operatorname{div}}
\newcommand{\R}{\mathbb{R}}
\newcommand{\vp}{\varphi}
\newcommand{\al}{\alpha}
\newcommand{\be}{\beta}
\newcommand{\ka}{\kappa}
\newcommand{\si}{\sigma}
\newcommand{\sm}{\setminus}
\newcommand{\Om}{\Omega}
\newcommand{\N}{\widetilde{N}}
\newcommand{\wDu}{\widehat{\D u}}
\newcommand{\wu}{\widetilde{u}}
\newcommand{\wv}{\widetilde{v}}
\newcommand{\ww}{\widetilde{w}}
\newcommand{\we}{\widetilde{\varepsilon}}
\newcommand{\loc}{\mathrm{loc}}
\newcommand\reallywidehat[1]{\ThisStyle{%
  \setbox0=\hbox{$\SavedStyle#1$}%
  \stackengine{-1.0\ht0+.5pt}{$\SavedStyle#1$}{%
    \stretchto{\scaleto{\SavedStyle\mkern.15mu\char'136}{2.6\wd0}}{1.4\ht0}%
  }{O}{c}{F}{T}{S}%
}}
\newcommand{\mean}[1]{\langle{#1}\rangle}
\renewcommand{\Re}{\operatorname{Re}}
\newcommand{\fs}{\mathcal{F}}
\newcommand{\fso}{\mathcal{F}_0}
\newcommand{\fsz}{\mathcal{F}_{z_0}}
\def\Xint#1{\mathchoice
  {\XXint\displaystyle\textstyle{#1}}%
  {\XXint\textstyle\scriptstyle{#1}}%
  {\XXint\scriptstyle\scriptscriptstyle{#1}}%
  {\XXint\scriptscriptstyle\scriptscriptstyle{#1}}%
  \!\int}
\def\XXint#1#2#3{{\setbox0=\hbox{$#1{#2#3}{\int}$}
    \vcenter{\hbox{$#2#3$}}\kern-.5\wd0}}
\def\dashint{\Xint-}
\author{Seongmin Jeon}
\address{Department of Mathematics Education, Hanyang University, 222 Wangsimni-ro, Seongdong-gu, Seoul 04763, Republic of Korea}
\email{seongminjeon@hanyang.ac.kr}
\author{Arshak Petrosyan}
\address{Department of Mathematics, Purdue University, West Lafayette,
  IN 47907, USA}
\email{arshak@purdue.edu}
\thanks{S. Jeon was supported by the Academy of Finland grant 347550 and the research fund of Hanyang University(HY-202400000003278).}
\title[Free boundary for Almost minimizers of parabolic Signorini problem]{Free boundary regularity for almost minimizers of the parabolic Signorini problem}
\subjclass[2020]{Primary 35R35}
\keywords{Almost minimizers, parabolic thin obstacle (or Signorini) problem, regular set, Weiss-type monotonicity formula, Almgren-type frequency, epiperimetric inequality}
\begin{document}
\begin{abstract}
In this paper, we study the regularity of the ``regular'' part of the free boundary for almost minimizers in the parabolic Signorini problem with zero thin obstacle. This work is a continuation of our earlier research on the regularity of almost minimizers. We first establish the Weiss-type monotonicity formula by comparing almost minimizers with parabolically homogeneous replacements and utilizing conformal self-similar coordinates. Subsequently, by deriving the Almgren-type frequency formula and applying the epiperimetric inequality, we obtain the optimal growth near regular free boundary points and achieve the regularity of the regular set.
\end{abstract}
\maketitle
\tableofcontents
\section{Introduction}

\subsection{Solutions of parabolic Signorini problem}
Let $\Omega$ be a domain in $\R^n$, $n\ge2$, and $\mathcal M$ be a smooth $(n-1)$-dimensional manifold that divides $\Omega$ into two parts: $\Omega\setminus\mathcal{M}=\Omega^+\cup \Omega^-$. For $T>0$, we set $\Omega_T:=\Omega\times(0,T]$, $\cM_T:=\cM\times(0,T]$ (\emph{the thin space}), and $(\pa\Omega)_T:=\pa\Omega\times(0,T]$. Let also $\vp:\cM_T\to\R$ (\emph{the thin obstacle}), $\vp_0:\Omega\times\{0\}\to\R$ (\emph{the initial value}), and $g:(\pa\Omega)_T\to\R$ (\emph{the boundary value}) be prescribed functions satisfying the compatibility conditions: $\vp_0\ge\vp$ on $\cM\times\{0\}$, $g\ge\vp$ on $(\cM\cap\pa\Omega)\times(0,T]$, and $g=\vp_0$ on $\pa\Omega\times\{0\}$.

We then say that a function $u\in W^{1,0}_2(\Omega_T)$ (see Subsection~\ref{subsec:not} for notations) is a solution of the \emph{parabolic thin obstacle} (or \emph{Signorini}) \emph{problem} in $\Omega_T$, if it satisfies the variational inequality 
\begin{align*}
    &\int_{\Omega_T}\D u\D(v-u)+\pa_tu(v-u)\ge0\quad\text{for any }v\in \cK,\\
    &u\in \mathcal{K},\ \pa_tu\in L^2(\Omega_T),\ u(\cdot,0)=\vp_0 \text{ on } \Omega,
\end{align*}
where $\mathcal K=\{v\in W^{1,0}_2(\Omega_T):v\ge\vp\text{ on }\cM_T,\ v=g\text{ on }(\pa\Omega)_T\}$. It is known that the solution $u$ satisfies 
\begin{align*}
    \Delta u-\pa_tu=0 & \text{ in }\Omega_T\setminus\cM_T,\\
    u\ge\vp,\ \pa_{\nu^+}u+\pa_{\nu^-}u\ge0,\ (u-\vp)(\pa_{\nu^+}u+\pa_{\nu^-}u)=0&\text{ on }\cM_T,\\
    u=g&\text{ on }(\pa\Omega)_T,\\
    u(\cdot,0)=\vp_0&\text{ on }\Omega\times\{0\},
\end{align*}
to be understood in a certain weak sense, where $\nu^\pm$ are the outer unit normal to $\Omega^\pm$ on $\cM$. 

In the study of the above problem, the main interests are
\begin{enumerate}[label=$\circ$]
    \item the regularity of the solution $u$,
    \item the regularity and structure of the \emph{free boundary}
    $$
    \Gamma(u)=\partial_{\cM_T}\{(x,t)\in \cM_T\,:\, u(x,t)>\vp(x,t)\}.
    $$
\end{enumerate}

A comprehensive examination of both regularity of the solution and the properties of the free boundary was conducted in \cite{DanGarPetTo17} by the second author, Danielli, Garofalo and To, under the condition that the thin manifold $\cM$ is flat (cf. refer to \cite{AryShi24} for the variable coefficients parabolic Signorini problem). Specifically, they established a generalized frequency formula, and employed it to achieve the optimal $H^{3/2,3/4}$ regularity of the solution and classified the free boundary points according to their frequency limits. \cite{DanGarPetTo17} dealt with two special types of subsets of the free boundary:  the \emph{regular set} and the \emph{singular set}.

The regular set is defined as the set of the free boundary points with minimal frequency $3/2$. Similarly to the elliptic Signorini problem \cites{AthCafSal08, CafSalSil08, PetShaUra12}, \cite{DanGarPetTo17} showed that there is a cone of spatial direction in which $u-\vp$ is monotone. Combining this with the fact that the blowups at regular points are time independent, they obtained the Lipschitz regularity of the regular set in the space variables. Moreover, by applying the parabolic boundary Harnack principles with thin Lipschitz complement, they proved that the regular set is given locally as a graph with $H^{\al,\al/2}$ regular gradient.

The singular set corresponds to the free boundary points with frequency $2m$, $m\in \mathbb{N}$, which have the characterization that the coincidence set $\{u=\vp\}$ has zero $H^n$-density in the thin manifold $\cM_T$. Following the approach in \cite{GarPet09} by the second author and Garofalo, \cite{DanGarPetTo17} established Weiss- and Monneau-type monotonicity formulas and applied the parabolic version of the Whitney's extension theorem to achieve the $C^1$ structure of the singular set.

\subsection{Almost minimizers}
In this paper we investigate the almost minimizers concerning the parabolic Signori problem described above. It serves as a continuation of \cite{JeoPet23}, where the authors previously explored the regularity of almost minimizers. For technical reasons, we consider two different notions of almost minimizers: ``unweighted'' almost minimizers and ``weighted'' almost minimizers. We first introduce unweighted almost minimizers, which correspond to those studied in \cite{JeoPet23}. 

We let $\vp$ be the thin obstacle on $Q'_1$. Given $r_0>0$, we say that $\eta:[0,r_0)\to[0,\infty)$ is a \emph{gauge function} or a modulus of continuity if $\eta$ is monotone nondecreasing and $\eta(0+)=0$. Here and henceforth we use notations from
Subsection~\ref{subsec:not}.

\begin{definition}[unweighted version]
Let $z_0=(x_0,t_0)\in Q_1$. We say that a function $u\in W^{1,1}_2(Q_1)$ satisfies the \emph{unweighted almost parabolic Signorini property} at $z_0$ if $u\ge\vp$ on $Q'_1$ and for any parabolic cylinder $Q_r(z_0)\Subset Q_1$ with $0<r<r_0$, we have
$$
\int_{Q_r(z_0)}(1-\eta(r))|\D u|^2+2\pa_tu(u-v)\le (1+\eta(r))\int_{Q_r(z_0)}|\D v|^2
$$
for any $v\in W^{1,0}_2(Q_r(z_0))$ with $v\ge \vp$ on $Q'_r(z_0)$ and $v-u\in L^2(t_0-r^2,t_0;W^{1,2}_0(B_r(x_0)))$.

We say that $u\in W^{1,1}_2(Q_1)$ is an \emph{unweighted almost minimizer for the parabolic Signorini problem} in $Q_1$ if $u\ge\vp$ on $Q_1'$ and $u$ satisfies the almost parabolic Signorini property at every $z_0\in Q_1$.
\end{definition}

\medskip
Next, we define the weighted version of almost minimizers. To this aim, we observe that if $u$ is a solution of the parabolic Signorini problem in the strip $S_1$, then
\begin{align}\label{eq:par-Sig}
    \int_{S_1}\left[(-t)|\D u|^2+(-x\cdot\D u-2t\partial_tu)(u-w)\right]G\,dxdt\le \int_{S_1}(-t)|\D w|G\,dxdt
\end{align}
for every proper competitor $w$. This motivates the following definition of weighted almost minimizers.

\begin{definition}[weighted version]
Let $z_0=(x_0,t_0)\in Q'_1$. We say that a function $u\in \fsz$ satisfies the \emph{weighted almost parabolic Signorini property} at $z_0$ if $u\ge\vp$ on $S'_1$ and \begin{align}\label{eq:par-alm-min-def}\begin{split}
    &\begin{multlined}\int_{S_r(t_0)\sm S_\rho(t_0)}\big[ (1-\eta(r))(t_0-t)|\D u|^2\\
    +(-(x-x_0)\cdot \D u-2(t-t_0)\pa_tu)(u-w)\big]G_{z_0}\,dxdt\end{multlined}\\
    &\qquad\begin{multlined}\le (1+\eta(r))\int_{S_r(t_0)\sm S_\rho(t_0)} (t_0-t)|\D w|^2G_{z_0}\,dxdt\\
    +\eta(r)\int_{S_r(t_0)\sm S_\rho(t_0)}(u-w)^2G_{z_0}\,dxdt+\|u\|_{\fsz}^2e^{-\frac1r}.\end{multlined}\end{split}\end{align}
for any $0\le \rho<r<r_0$ with $-1<t_0-r^2$, and $w\in L^2(t_0-r^2,t_0-\rho^2;W^{1,2}(\R^n,G_{z_0}))$ with $w\ge \vp$ on $S'_r(t_0)\sm S'_\rho(t_0)$ and $u-w\in L^2(t_0-r^2,t_0-\rho^2;W^{1,2}_0(\R^n,G_{z_0}))$.

We say that a function $u\in \fs$ is a \emph{weighted almost minimizer for the parabolic Signori problem} on $Q'_1$ if $u\ge\vp$ on $S'_1$ and $u$ satisfies the weighted almost parabolic Signorini property at every $z_0\in Q'_1$.
\end{definition}

The readers might be intrigued by the presence of the exponential term $\|u\|_{\fsz}^2e^{-\frac1r}$ in \eqref{eq:par-alm-min-def}. We incorporated this term since we discovered that solutions of some perturbed parabolic Signorini problems exhibit characteristics of almost minimizers, with the inclusion of the exponential error, see Appendix~\ref{sec:appen-ex}.

\begin{definition}
We say that a function $u\in \fs$ is an \emph{almost minimizer for the parabolic Signori problem} in $Q_1$ if it is both an unweighted almost minimizer in $Q_1$ and a weighted almost minimizer on $Q'_1$.
\end{definition}

The notion of a weighted almost minimizer is crucial for establishing monotonicity formulas, which are significant ingredients in our analysis of the free boundary. We will verify in Appendix~\ref{sec:appen-ex} that solutions of some perturbed parabolic Signorini problems, multiplied by a standard cutoff function, satisfy the weighted almost parabolic Signorini property.

For background information and relevant literature concerning almost minimizers, we refer to \cite{JeoPet23} and references therein.

The time-independent almost minimizers for the Signorini problem were comprehensively treated by the authors in \cite{JeoPet21}. This paper extends specific results from the elliptic to the parabolic setting by employing similar energy methods. However, the parabolic case presents significant new challenges compared to the elliptic setting, primarily because we have to work with energy functionals involving singular weights.

\subsection{Main results}
This paper focuses on the local regularity results for free boundaries. Thus we assume that the domain $\Omega_T\subset \R^n\times\R$ is the parabolic cylinder $Q_1$. Given the technical nature of the problem, we specifically examine the scenario where the thin space $\cM_T$ is $Q'_1$ (flat thin space), the thin obstacle $\vp$ is identically zero (zero thin obstacle), and the gauge function $\eta(r)=r^\al$ for some $0<\al<1$ with $r_0=1$.

\medskip

Our first central result of this paper concerns the Weiss-type monotonicity formula.

\begin{maintheorem}
    \label{mainthm:par-Weiss}
    Fix $\ka_0>2$, $0<\de<2$ and $0<\e\le\al<1$. For $z_0\in\Gamma(u)\cap  Q_{1/2}'$, let $u\in \fsz$ satisfy the weighted almost parabolic Signorini property at $z_0$. For $0<\ka<\ka_0$, we set
    \begin{multline*}
W_{\ka,\al,\e,\de}(r,u,z_0)\\
\begin{multlined}:=\frac{e^{ar^\al}}{r^{2\ka+2}}\Bigg(\int_{S_r(t_0)}\left(2(t_0-t)|\D u|^2-\ka(1-br^\e)u^2\right)G_{z_0}\,dxdt+\|u\|_{\fsz}^2e^{-\frac1{r}}r^{-\de}\Bigg),\end{multlined}
\end{multline*}
where $a=a(\ka,\al)>0$ and $b=b(\ka,\e)>0$ are as in Theorem~\ref{thm:par-weiss}. Then $W_{\ka,\al,\e,\de}(r,u,z_0)$ is nondecreasing in $r$ for $0<r<r_0=r_0(\ka_0,\e)$.
\end{maintheorem}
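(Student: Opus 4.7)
The plan is to establish monotonicity by differentiating $W_{\ka,\al,\e,\de}(r,u,z_0)$ in $r$ and testing the weighted almost minimality inequality \eqref{eq:par-alm-min-def} against a $\ka$-parabolically homogeneous replacement of $u$. By translation invariance I take $z_0=0$ and write $W(r)$, $G:=G_0$, $S_r:=S_r(0)$. The guiding framework is the conformal self-similar change of coordinates $(y,\tau)=(x/\sqrt{-t},\,-\tfrac12\log(-t))$ announced in the abstract: under it, the Gaussian integrand $(-t)|\D u|^2 G\,dxdt$ becomes a Dirichlet-type integrand on the cylinder $\R^n\times(\tau_r,\infty)$ with the stationary Gaussian weight $e^{-|y|^2/4}$, and parabolically $\ka$-homogeneous functions become $\tau$-independent once the factor $e^{-\ka\tau}$ is divided out. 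This makes the natural competitor transparent: the $\tau$-constant extension of the trace at $\tau=\tau_r$, which in original coordinates reads
\[
w(x,t):=(-t/r^2)^{\ka/2}\,u\!\left(xr/\sqrt{-t},\,-r^2\right),
\]
and is admissible in \eqref{eq:par-alm-min-def} with $\rho=0$ because $u\ge 0$ on $S'_r$ forces $w\ge 0$ on $S'_r$.

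Differentiating $W$ directly yields
\begin{align*}
\frac{d}{dr}W(r)&=\left(a\al r^{\al-1}-\tfrac{2\ka+2}{r}\right)W(r)+\frac{e^{ar^\al}}{r^{2\ka+2}}\,\frac{d}{dr}\!\int_{S_r}\!\!\bigl[2(-t)|\D u|^2-\ka(1-br^\e)u^2\bigr]G\,dxdt\\
&\quad+\frac{e^{ar^\al}}{r^{2\ka+2}}\,\frac{d}{dr}\!\left(\|u\|_{\fso}^2\,e^{-1/r}r^{-\de}\right).
\end{align*}
Plugging $w$ into \eqref{eq:par-alm-min-def} and using the Euler identity $x\cdot\D w+2t\pa_tw=\ka w$ for the competitor, the cross term $(-x\cdot\D u-2t\pa_tu)(u-w)$ is rearranged via integration by parts against $G$---the key Gaussian identity is $\D G=-\tfrac{x}{2(-t)}G$---so that it combines with the Dirichlet term to produce, on the slice $\{t=-r^2\}$, a nonnegative boundary quadratic form in $u$ and $w$. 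This is the parabolic analogue of the Weiss perfect-square-at-the-sphere miracle, and once compared with the slice contribution from $\frac{d}{dr}\int_{S_r}$ it gives, up to error terms, the required lower bound on $W'(r)$.

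There are three classes of errors to absorb. The multiplicative factors $(1\pm\eta(r))=(1\pm r^\al)$ in \eqref{eq:par-alm-min-def} produce corrections of order $r^\al\cdot(\text{Dirichlet energy})$; these are precisely killed by the term $a\al r^{\al-1}W(r)$ provided $a=a(\ka,\al)$ is large enough. The $\eta(r)\int(u-w)^2 G$ error expands a piece proportional to $r^\al\int u^2 G$, which is absorbed by the new positive term $\ka b\e r^{\e-1}\int u^2 G$ coming from differentiating $(1-br^\e)$; using $\e\le\al$ this fixes $b=b(\ka,\e)$ large enough. Finally, the additive error $\|u\|_{\fso}^2 e^{-1/r}$ is dominated by $\frac{d}{dr}(\|u\|_{\fso}^2 e^{-1/r}r^{-\de})=\|u\|_{\fso}^2 e^{-1/r}(r^{-\de-2}-\de r^{-\de-1})$, which for small $r$ beats $e^{-1/r}$ comfortably.

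The main technical obstacle I expect is the rearrangement step: verifying that the cross term in \eqref{eq:par-alm-min-def}, once manipulated by Gaussian integration by parts, combined with the slice contribution from $\frac{d}{dr}\int_{S_r}$, and compared against the Euler identity for the homogeneous competitor, genuinely reassembles into a nonnegative perfect square on $\{t=-r^2\}$ modulo the tractable $\eta$-errors. This is where the Gaussian weight and parabolic scaling have to conspire, and it is the delicate bookkeeping point---the parabolic counterpart of Weiss's classical elliptic computation. Once this identity is in hand, choosing $a$, $b$, and a small $r_0=r_0(\ka_0,\e)$ so that all absorptions are uniform in $\ka<\ka_0$ yields $W'(r)\ge 0$.
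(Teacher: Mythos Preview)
Your overall strategy matches the paper's: differentiate, test the weighted almost minimality against the $\ka$-homogeneous replacement $w(x,t)=(\sqrt{-t}/r)^{\ka}u(rx/\sqrt{-t},-r^2)$, handle the cross term via the conformal self-similar coordinates, and absorb the three error classes through $a$, $b$, and the exponential correction. But your expectation for the key rearrangement step is wrong in a way that would derail the computation if taken literally.

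You anticipate that the cross term $\int_{S_r}(-x\cdot\D u-2t\pa_tu)(u-w)G$ combines with the slice contribution from $\frac{d}{dr}\int_{S_r}$ to produce ``a nonnegative boundary quadratic form on $\{t=-r^2\}$'', by analogy with the elliptic Weiss perfect square $\int_{\pa B_r}(\pa_\nu u-\ka u/r)^2$. That is not the parabolic mechanism. The conformal change of variables instead yields the \emph{bulk} identity
\[
\int_{S_r}(\ka u-x\cdot\D u-2t\pa_tu)(u-w)\,G\;=\;(\ka+1)\int_{S_r}(u-w)^2\,G
\]
(the paper's Lemma~\ref{lem:par-(u-w)-transf}); there is no slice perfect square. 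The slice terms from $\frac{d}{dr}\int_{S_r}$ are handled separately, by converting them to $\int_{S_r}(-2t)|\D w|^2G$ and $\int_{S_r}w^2G$ via the explicit homogeneity of $w$. After these reductions one is left with three bulk pieces: a positive $\int_{S_r}w^2G$ with coefficient $\sim \ka br^\e$, a negative $\int_{S_r}u^2G$ with a \emph{slightly smaller} coefficient $\sim \ka br^\e$, and a positive $\int_{S_r}(u-w)^2G$ of order one. Closure is not by absorbing a small $r^\al\int u^2$ into $\ka b\e r^{\e-1}\int u^2$ as you write; it is by Young's inequality $\int u^2\le(1+\mu)\int w^2+(1+1/\mu)\int(u-w)^2$ with $\mu\sim\e/\ka$, where the large order-one $\int(u-w)^2$ term eats the $1/\mu$ loss (for $r<r_0(\ka_0,\e)$), and the carefully engineered $\e$-gap between the two $br^\e$ coefficients---manufactured by the auxiliary inequalities of Lemma~\ref{lem:par-weiss-lem}---makes the $w^2$ versus $u^2$ balance nonnegative. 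So the role of $b$ is not to absorb a lower-order error but to create this gap; your absorption scheme as stated would not close.
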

Since almost minimizers do not satisfy a partial differential equations, we prove Theorem~\ref{mainthm:par-Weiss} by comparing them with appropriate homogeneous replacements, as done in the elliptic counterpart \cite{JeoPet21}. However, in our parabolic case, we encounter new technical challenges, making the proof significantly more complicated. This is mainly due to the formulation of the Weiss-type energy, which is defined in the unbounded strip and involves the singular weight. It turns out that we have to employ conformal self-similar coordinates.

By making use of the above one-parameter family of Weiss-type monotonicity formulas, we derive the Almgren-type frequency formula. For caloric functions, the monotonicity of the following frequency was established in \cite{Poo96}:
$$
r\longmapsto N(r,u,z_0)=\frac{r^2\int_{\R^n\times\{t_0-r^2\}}|\D u|^2G_{z_0}dx}{\int_{\R^n\times\{t_0-r^2\}}u^2G_{z_0}dx}.
$$
Recently, its averaged version was considered in \cite{DanGarPetTo17} for the study of the parabolic Signorini problem. Regarding almost minimizers, we show that a modification of those quantities is monotone. To describe it, we denote
$$
 N_\de(r,u,z_0):=\frac{\int_{S_r(t_0)}2(t_0-t)|\D u|^2G_{z_0}+\|u\|_{\fsz}^2e^{-\frac1{r}}r^{-\de}}{\int_{S_r(t_0)}u^2G_{z_0}}.
$$
\begin{maintheorem}[Almgren-type monotonicity formula]\label{mainthm:par-almgren}
Let $\ka_0,\delta,\e,\al,z_0,u,$ and $b$ be as in Theorem~\ref{mainthm:par-Weiss}. Then $\widehat{N}_{\ka_0,\e,\de}(r,u,z_0):=\min\{\frac1{1-br^\e}N_\de(r,u,z_0),\ka_0\}$ is nondecreasing in $0<r<r_0=r_0(\ka_0,\e)$. Moreover, if $u$ is even-symmetric in $x_n$-variable, then we have either
$$
\widehat{N}_{\ka_0,\de}(0+,u,z_0)\footnote{From the monotonicity of $\widehat{N}_{\ka_0,\e,\delta}$ and $\lim_{r\to0}(1-br^\e)=1$, we see that the limit $\widehat{N}_{\ka_0,\de}(0+,u,z_0)=\lim_{r\to 0}\widehat{N}_{\ka_0,\e,\de}(r,u,z_0)$ exists and its value is independent of $\e$.}=3/2\quad\text{or}\quad \widehat{N}_{\ka_0,\de}(0+,u,z_0)\ge2.
$$
\end{maintheorem}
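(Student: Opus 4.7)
The plan is to deduce the monotonicity of $\widehat{N}_{\ka_0,\e,\de}$ from the one-parameter family of Weiss monotonicity formulas in Theorem~\ref{mainthm:par-Weiss} by an elementary case analysis, and to establish the frequency gap by a blowup argument that reduces matters to the known classification of parabolically homogeneous solutions of the parabolic Signorini problem. Set
\[
H(r) := \int_{S_r(t_0)} u^2 G_{z_0}\,dxdt, \qquad D(r) := \int_{S_r(t_0)} 2(t_0-t)|\D u|^2 G_{z_0}\,dxdt + \|u\|_{\fsz}^2 e^{-1/r} r^{-\de},
\]
so that $N_\de(r) = D(r)/H(r)$ and
\[
W_{\ka,\al,\e,\de}(r) = \frac{e^{ar^\al}}{r^{2\ka+2}}\bigl[D(r) - \ka(1-br^\e)H(r)\bigr].
\]

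Fix $0 < r_1 < r_2 < r_0$ and write $f(r) := N_\de(r)/(1-br^\e)$. If $\widehat{N}_{\ka_0,\e,\de}(r_1) < \ka_0$, then the minimum defining $\widehat{N}$ is attained by $f(r_1)$; choosing $\ka := f(r_1) \in (0,\ka_0)$ in Theorem~\ref{mainthm:par-Weiss} yields $W_\ka(r_1) = 0 \le W_\ka(r_2)$, which rewrites as $f(r_2) \ge f(r_1)$ and hence $\widehat{N}_{\ka_0,\e,\de}(r_2) \ge \widehat{N}_{\ka_0,\e,\de}(r_1)$. If instead $\widehat{N}_{\ka_0,\e,\de}(r_1) = \ka_0$, then $f(r_1) \ge \ka_0$, so for every $\ka \in (0,\ka_0)$ we have $W_\ka(r_1) > 0$ and the Weiss monotonicity forces $W_\ka(r_2) > 0$, i.e.\ $f(r_2) > \ka$; sending $\ka \nearrow \ka_0$ gives $\widehat{N}_{\ka_0,\e,\de}(r_2) = \ka_0$. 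The existence and $\e$-independence of $\widehat{N}_{\ka_0,\de}(0+,u,z_0)$ then follow from the monotonicity together with $(1-br^\e) \to 1$.

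For the gap under the even symmetry in $x_n$, set $\ka := \widehat{N}_{\ka_0,\de}(0+,u,z_0)$ and consider the Almgren rescalings $\tilde u_r(x,t) := u(x_0 + rx,\, t_0 + r^2 t)/h(r)^{1/2}$ with the normalizer $h(r)$ chosen so that $\int_{S_1} \tilde u_r^2 G \equiv 1$. Combining the Weiss monotonicity with the interior regularity estimates from our prior work \cite{JeoPet23}, we extract a subsequential limit $u_0 \not\equiv 0$ that is even in $x_n$, is parabolically $\ka$-homogeneous (by the scale invariance of the Almgren frequency together with $N_\de(r,u) \to \ka$ as $r\to 0$), and is a genuine global solution of the parabolic Signorini problem with zero thin obstacle, since the error factor $\eta(r) = r^\al$ and the exponential error $\|u\|_{\fsz}^2 e^{-1/r}$ in \eqref{eq:par-alm-min-def} vanish under the rescaling. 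The classification of even-symmetric parabolically $\ka$-homogeneous Signorini solutions from \cite{DanGarPetTo17} excludes any $\ka \in (3/2, 2)$, giving the dichotomy $\ka = 3/2$ or $\ka \ge 2$.

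The main technical hurdle is the passage from the almost minimizer $u$ to the Signorini solution $u_0$: compactness of $\{\tilde u_r\}$ in a topology strong enough to pass to the limit in the weighted variational inequality \eqref{eq:par-alm-min-def}, nondegeneracy of the limit (provided by the Weiss formula together with the normalization $\int_{S_1} \tilde u_r^2 G \equiv 1$), and the verification that the almost-minimizer error terms do not survive rescaling. Once these are in hand, the frequency gap is a pure consequence of the existing classification, with no new PDE analysis required for almost minimizers.
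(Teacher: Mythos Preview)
Your monotonicity argument is essentially the paper's (Theorem~\ref{thm:par-almgren}), just run in the forward direction rather than by contrapositive; the content is identical.

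For the frequency gap your blowup outline matches the paper's strategy, but two concrete steps are missing or misstated. First, to conclude that the blowup $u_0$ is parabolically $\ka$-homogeneous you need $N^0(\rho r,u,z_0)\to\ka$, since it is $N^0$ (not $N_\de$) that enjoys the scaling identity $N^0(\rho,\tilde u_r)=N^0(\rho r,u)$. From $N_\de(0+)=\ka$ you only get $\limsup N^0\le\ka$; the gap is precisely $\|u\|_{\fsz}^2 e^{-1/r}r^{-\de}/H(r)$, and showing this vanishes is not the same as your claim that the error ``vanishes under rescaling,'' because you have no a priori lower bound on $H(r)$ at this stage. The paper treats this separately in Lemma~\ref{lem:par-Almgren} via a contradiction argument that exploits the freedom to simultaneously increase $\de$ and $\ka_0$. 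Second, the classification in \cite{DanGarPetTo17} that you invoke only yields the dichotomy once one knows $\ka>1$, which in turn requires that the origin be a free boundary point for $u_0$, i.e.\ $u_0(0)=|\reallywidehat{\D u_0}(0)|=0$. In the paper this comes from the complementarity condition for the almost minimizer (Lemma~\ref{lem:par-compl-cond}) passed through the $C^{1,0}_{\loc}$ convergence; your sketch omits it, and without it nothing rules out $\ka<3/2$. Finally, the paper does not pass to the limit directly in the weighted variational inequality as you propose, but instead compares $\tilde u_r$ to the parabolic Signorini replacement of a mollification $\tilde u_r\ast\vp_\mu$ (Remark~\ref{rmk:conv-alm-min}, Corollary~\ref{cor:conv-diff-est}) and uses the compactness from \cite{DanGarPetTo17} for genuine solutions.
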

In contrast to the Almgren frequency utilized for solutions to the parabolic Signorini problem \cite{DanGarPetTo17}, the Almgren-type frequencies we work with for almost minimizers include the extra exponential term $\|u\|_{\fsz}^2e^{-\frac1r}r^{-\de}$. Yet, we will show that this term is unsubstantial (see Lemma~\ref{lem:par-Almgren}) and derive the same minimal frequency and frequency gap as presented in \cite{DanGarPetTo17} (see Lemma~\ref{lem:par-min-freq}).

Next, we consider the subset of the free boundary
\begin{align*}
    \mathcal{R}(u)=\{z_0\in \Gamma(u)\cap Q'_{1/2}\,:\, \widehat{N}_{\ka_0,\delta}(0+,u,z_0)=3/2\text{ for some $\ka_0>2$, $0<\delta<2$}\},
\end{align*}
the set of all free boundary points with the minimal frequency $3/2$, known as the \emph{regular set}.

\begin{maintheorem}[Optimal growth near regular free boundary]\label{mainthm:par-opt-growth-est}
Fix $\ka_0>2$. Suppose that an even-symmetric funtion $u\in \fsz$ satisfies the almost parabolic Signorini property at $z_0\in\mathcal{R}(u)$. Then, 
\begin{align*}
    \int_{S_r(t_0)}u^2G_{z_0}\,dxdt &\le C(\ka_0,n,\al)\|u\|_{\fsz}^2r^{5},
\end{align*}
for $0<r<r_0=r_0(\ka_0,n,\al)$.
\end{maintheorem}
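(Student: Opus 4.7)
The plan is to derive a differential inequality for
\[H(r):=\int_{S_r(t_0)} u^2 G_{z_0}\,dxdt\]
which, combined with the Almgren-type lower bound $\widehat N_{\ka_0,\de}(r,u,z_0)\ge 3/2$ from Theorem~B, will force $H(r)\le C\|u\|_{\fsz}^2 r^5$ after a simple integration. Setting $h(\tau):=\int_{\R^n}u^2(\cdot,t_0-\tau)G_{z_0}(\cdot,t_0-\tau)\,dx$, one has $H(r)=\int_0^{r^2}h(\tau)\,d\tau$ and $H'(r)=2r\,h(r^2)$. For a caloric function, the classical Poon identity $h'(\tau)=2\int_{\R^n}|\D u|^2 G_{z_0}\,dx$ together with integration by parts in $\tau$ yields the clean relation $r^2h(r^2)=H(r)+D(r)$ with $D(r):=\int_{S_r(t_0)}2(t_0-t)|\D u|^2 G_{z_0}\,dxdt$, equivalently $rH'(r)=2(1+N(r))H(r)$ where $N=D/H$.

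For the weighted almost minimizer with zero obstacle, the first and main task is to establish the perturbed version
\[r H'(r) \;\ge\; 2\bigl(1+N_\de(r)\bigr)H(r) \;-\; C\,\|u\|_{\fsz}^2\, r^{5+\mu}\]
for some $\mu>0$ and $0<r<r_0(\ka_0,n,\al)$. I would derive this by testing the weighted almost Signorini property \eqref{eq:par-alm-min-def} on each thin time-shell $S_r(t_0)\sm S_\rho(t_0)$ against a suitable competitor (e.g., a caloric or parabolically homogeneous replacement of $u$, in the spirit of the proof of Theorem~A), and carrying out weighted integration by parts against the backward Gaussian $G_{z_0}$. The modulus $\eta(r)=r^\al$ and the exponential term $\|u\|_{\fsz}^2 e^{-1/r}$ in \eqref{eq:par-alm-min-def} produce the polynomial remainder of order $r^{5+\mu}$, while the zero obstacle $\vp\equiv 0$ ensures that the Signorini complementarity boundary contributions vanish in the integration by parts.

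By Theorem~B, $N_\de(r)\ge (3/2)(1-br^\e)$ for $0<r<r_0(\ka_0,n,\al)$, so the perturbed identity becomes $rH'(r)\ge(5-3br^\e)H(r)-C\|u\|_{\fsz}^2 r^{5+\mu}$. Dividing by $rH(r)$ and integrating from $r$ to a fixed reference scale $r_1<r_0$, the $r^\e$ correction integrates to a finite constant while the higher-order error stays controllable as long as $H$ does not collapse below order $r^{5+\mu}$ (an alternative that can be handled by an iteration or dichotomy argument). Exponentiating then gives $H(r)\le C\cdot (H(r_1)/r_1^5)\cdot r^5\le C(\ka_0,n,\al)\|u\|_{\fsz}^2 r^5$, using the trivial bound $H(r_1)\le C\|u\|_{\fsz}^2$ at the fixed scale $r_1$. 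The main obstacle is proving the perturbed identity in the second display above: since $u$ does not solve the heat equation pointwise, the textbook derivation of $h'(\tau)=2\int|\D u|^2 G_{z_0}\,dx$ via $\pa_t u=\Delta u$ and integration by parts is unavailable, and must be replaced by a variational argument rooted in \eqref{eq:par-alm-min-def}, with the almost-minimality errors carefully tracked to be of order $r^{5+\mu}$ with $\mu>0$.
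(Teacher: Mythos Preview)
Your strategy differs substantially from the paper's. The paper proves Theorem~C through Lemmas~\ref{lem:par-weak-growth-est}--\ref{lem:par-opt-growth-est}: it first obtains a weak growth estimate with an $\tilde\e$-loss, then invokes the parabolic \emph{epiperimetric inequality} of \cite{Shi20} to prove the decay $W_{3/2,\al,\e,\de}(r)\le Cr^{\min\{\sigma,3\e/4\}}$ (Lemma~\ref{lem:par-weiss-bound}), and finally feeds this decay into a dyadic argument on $m(r)=H(r)/r^5$. Your ODE approach, if it worked, would bypass the epiperimetric inequality entirely.

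The gap is the ``perturbed Poon identity'' $rH'(r)\ge 2(1+N_\de(r))H(r)-C\|u\|_{\fsz}^2 r^{5+\mu}$. Testing \eqref{eq:par-alm-min-def} against a homogeneous replacement ``in the spirit of Theorem~A'' reproduces exactly \eqref{eq:par-weiss-deriv-2}, i.e.\ the bound $|m'(r)|\le Cr^{-\e/2}\frac{d}{dr}W_{3/2}$; this yields only the weak growth unless one also knows $W_{3/2}(r)\le Cr^\sigma$, which is precisely the epiperimetric step. Testing against a caloric replacement gives the comparison estimates of Lemma~\ref{lem:par-sig-alm-min-diff-est-weight}, not your identity. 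And the bare assertion that the almost-minimality errors are $O(r^{5+\mu})$ presupposes a growth bound on $D(r)$ that is essentially what you are trying to prove. A competitor that \emph{does} yield your inequality is $w=(1-\epsilon)u$ with $\epsilon=r^{\al/2}$: inserting it into \eqref{eq:par-alm-min-def} and dividing by $\epsilon$ gives $\int_{S_r}u(x\cdot\D u+2t\pa_t u)G\ge (1-Cr^{\al/2})D(r)-Cr^{3\al/2}H(r)-C\|u\|^2_{\fsz}e^{-1/r}r^{-\al/2}$; combined with the pure differentiation identity $rH'(r)=2H(r)+2\int_{S_r}u(x\cdot\D u+2t\pa_t u)G$ and the lower bound $D(r)\ge(3/2-Cr^\e)H(r)-o(r^N)$ from Theorem~B, one obtains $rH'(r)\ge(5-Cr^\nu)H(r)-C\|u\|^2_{\fsz}e^{-1/r}r^{-2}$, which integrates to $H(r)\le Cr^5$. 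This is simpler than the paper's route, but it is not the argument you wrote.
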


In the elliptic counterpart \cite{JeoPet21}, an analogous result was derived using the epiperimetric inequality. Regarding the parabolic Signorini problem, Shi \cite{Shi20} obtained a similar result by introducing the parabolic epiperimetric inequality. In our case, we adopt similar approaches. It is worth noting that while the application of these inequalities is rather immediate or standard in \cites{Shi20, JeoPet21}, it is considerably more complicated for the parabolic almost minimizers (see Lemmas \ref{lem:par-weiss-bound}-\ref{lem:par-opt-growth-est}).

Finally, the main result concerning the regularity of the regular set is as follows.

\begin{maintheorem}[Regularity of the regular set]\label{mainthm:par-Clg-reg-set}
Let $u\in \fs$ be a symmetric almost minimizer for the parabolic Signorini problem in $Q_1$. Then $\mathcal{R}(u)$ can be represented locally as an $(n-2)$-dimensional graph of a function, which has Hölder continuous spatial derivatives.
\end{maintheorem}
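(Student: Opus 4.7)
The plan is to follow the standard strategy for regular set regularity established in \cite{DanGarPetTo17} for genuine solutions and adapted in \cite{JeoPet21} to the elliptic almost minimizer setting, now integrating the three ingredients Theorems~\ref{mainthm:par-Weiss}, \ref{mainthm:par-almgren}, and \ref{mainthm:par-opt-growth-est} proved above. The first step is a blow-up analysis at regular points. Starting from the optimal growth estimate of Theorem~\ref{mainthm:par-opt-growth-est} together with the companion non-degeneracy at regular points (which follows from the Almgren monotonicity of Theorem~\ref{mainthm:par-almgren} and the frequency gap $3/2<2$), we show that the family of Gaussian rescalings
$$
\widetilde{u}_r^{z_0}(x,t):=\frac{u(x_0+rx,t_0+r^2t)}{r^{3/2}}
$$
at a point $z_0\in \mathcal{R}(u)$ is precompact in the natural $\fsz$-topology and that every blow-up $u_0$ is parabolically $3/2$-homogeneous by Theorem~\ref{mainthm:par-Weiss}. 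Using backward-parabolic smoothing together with the same weighted Weiss functional, we argue that $u_0$ is time-independent, so the elliptic classification forces $u_0$ to be a positive multiple of the standard half-space $3/2$-homogeneous Signorini profile in some tangential direction $e_{z_0}\in \mathbb{S}^{n-2}$.

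The next ingredients are uniqueness of the blow-up at a fixed $z_0$ and continuous dependence of the direction $e_{z_0}$ on $z_0\in \mathcal{R}(u)$. Uniqueness follows from the monotonicity of $W_{3/2,\al,\e,\de}$ via the rotation argument of \cite{JeoPet21}; the algebraic factor $(1-br^\e)$ and the exponential term $\|u\|_{\fsz}^2e^{-1/r}r^{-\de}$ are absorbed as the analogous errors were handled in the elliptic case. Joint continuity $z_0\mapsto e_{z_0}$ on $\mathcal{R}(u)$ is obtained from uniform bounds on the Weiss energy together with the upper-semicontinuity of the truncated frequency $\widehat{N}_{\ka_0,\e,\de}$; combined with the frequency gap of Theorem~\ref{mainthm:par-almgren}, this also shows that $\mathcal{R}(u)$ is relatively open in $\Gamma(u)\cap Q'_{1/2}$. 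With this information in hand, the central geometric step is a directional monotonicity lemma: there exist a modulus $\sigma(r)\to 0$ and a radius $r_0>0$ such that for every unit vector $\tau$ in the tangential cone $\{\tau\in \mathbb{S}^{n-2}:\tau\cdot e_{z_0}\ge\cos\theta\}$ one has $\pa_\tau u\ge -\sigma(r)$ on $Q'_r(z_0)$ for $r<r_0$. It is proved by comparing $u$ with its $\tau$-translates, using the optimal growth of Theorem~\ref{mainthm:par-opt-growth-est} in place of a PDE estimate and the quantitative closeness of $u$ on dyadic scales to the half-space blow-up coming from Step~1. Directional monotonicity in an open tangential cone forces $\mathcal{R}(u)$ to coincide locally with $\pa\{u(\cdot,t)>0\}\cap Q'_{r_0}(z_0)$, which is a Lipschitz graph in the $e_{z_0}$-direction.

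To upgrade Lipschitz regularity to a Hölder-continuous spatial gradient, we invoke the parabolic boundary Harnack principle for domains with thin Lipschitz complement proved in \cite{DanGarPetTo17}. Each tangential derivative $\pa_{\tau}u$ and the normal derivative $\pa_{e_{z_0}}u$ satisfies a heat-type equation on the (spatially Lipschitz) noncoincidence set up to an almost-minimality error decaying at the correct rate, and both vanish on the coincidence set; hence their ratio is $H^{\al,\al/2}$ in space-time, which transfers to $H^{\al,\al/2}$ regularity of the spatial gradient of the defining graph function.

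The main obstacle throughout this program is not any isolated step but the systematic management of the two error mechanisms built into the almost-minimality condition --- the algebraic $r^\e$ factor and the exponential $\|u\|_{\fsz}^2e^{-1/r}r^{-\de}$ term --- inside the blow-up, uniqueness, and directional-monotonicity arguments. These errors obstruct the direct transplantation of the PDE-based proofs of \cite{DanGarPetTo17} and force all comparisons with homogeneous and translated profiles to be carried out through the weighted-monotonicity machinery of Theorems~\ref{mainthm:par-Weiss} and \ref{mainthm:par-almgren} together with the optimal growth of Theorem~\ref{mainthm:par-opt-growth-est}. A secondary subtlety is the dual role of the unweighted and weighted notions of almost minimality: the weighted property is what delivers the monotonicity formulas, while the unweighted property is needed to pass from $L^2(G_{z_0})$-closeness to pointwise conclusions used in the directional monotonicity step.
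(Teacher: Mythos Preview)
Your blow-up analysis, uniqueness-of-blowup step, relative openness of $\mathcal{R}(u)$, and passage to a Lipschitz graph are broadly aligned with the paper (Sections~6--8, following the template of \cite{JeoPet21}*{Theorem~9.7}). The genuine gap is in your final upgrade from Lipschitz to H\"older-continuous spatial gradient. You propose to invoke the parabolic boundary Harnack principle of \cite{DanGarPetTo17} applied to ratios $\pa_\tau u/\pa_{e_{z_0}}u$, asserting that each tangential derivative ``satisfies a heat-type equation \ldots\ up to an almost-minimality error.'' This fails: almost minimizers do not solve any PDE, and their derivatives satisfy no equation whatsoever---almost minimality is a variational comparison condition, not a perturbation of the heat equation. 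There is no mechanism to produce the sub/supersolution structure boundary Harnack requires, and no boundary Harnack principle for almost minimizers is available.

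The paper avoids boundary Harnack entirely. The epiperimetric inequality (Theorem~\ref{thm:epi-ineq}) yields polynomial decay $W_{3/2,\al,\e,\de}(r,u,z_0)\le Cr^{\sigma}$ (Lemma~\ref{lem:par-weiss-bound}); this feeds a quantitative rotation estimate (Lemma~\ref{lem:par-rot-est}), which in turn gives H\"older continuity of $z_0\mapsto (c_{z_0},e_{z_0})$ (Lemmas~\ref{lem:par-blowup-est}--\ref{lem:par-blowup-hol}). Since $e_{z_0}$ is the inward spatial normal to the coincidence set at $z_0$, H\"older continuity of $e_{z_0}$ directly gives the H\"older continuity of $\D''g$, exactly as in Steps~4--5 of \cite{JeoPet21}*{Theorem~9.7}. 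Your outline never invokes the epiperimetric inequality; mere monotonicity of $W_{3/2,\al,\e,\de}$ gives uniqueness of blowups but not the \emph{H\"older} dependence of $e_{z_0}$ on $z_0$ that the gradient regularity requires.
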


\subsubsection{Proofs of Theorems~\ref{mainthm:par-Weiss}--\ref{mainthm:par-Clg-reg-set}} Although we
do not provide formal proofs of Theorems~\ref{mainthm:par-Weiss}--\ref{mainthm:par-Clg-reg-set} in
the main body of the paper, they can be deduced from the combination of
results there. To be more precise,
\begin{enumerate}[label=$\circ$,leftmargin=*,labelindent=\parindent]
\item Theorem~\ref{mainthm:par-Weiss} is contained in Theorem~\ref{thm:par-weiss}.
\item Theorem~\ref{mainthm:par-almgren} follows by combining
  Theorem~\ref{thm:par-almgren} and Lemma~\ref{lem:par-min-freq}.
\item The statement of Theorem~\ref{mainthm:par-opt-growth-est} is contained in that of Lemma~\ref{lem:par-opt-growth-est}.
\item The statement of Theorem~\ref{mainthm:par-Clg-reg-set} is contained in that of
  Theorem~\ref{thm:par-Clg-reg-set}.
\end{enumerate}

%%%%%%%%%%%%%%%%%%%%%%%%%%%%%%%%%%%%%%%%%%%%%%%%

\section{Notation and Preliminaries}

\subsection{Notation}\label{subsec:not}
We use the following notations throughout the paper.

For a function $u$, a set $\Om\subset \R^{n+1}$, a constant $\e\in(0,1)$, and a point $z_0=(x_0,t_0)$, we denote
\begin{align*}
    Q_r(z_0)&=B_r(x_0)\times(t_0-r^2,t_0]\\
    Q_{r,\rho}^\e(z_0)&=B_{r^\e}(x_0)\times (t_0-r^2,t_0-\rho^2]\\
    \pa_pQ_r(z_0)&=\left(\pa B_r(x_0)\times[t_0-r^2,t_0]\right)\cup\left(B_r(x_0)\times\{t_0-r^2\}\right)\,:\, \text{parabolic boundary}\\
    S_\rho(t_0)&=\R^n\times(t_0-\rho^2,t_0]\\
    \Om'&=\Om\cap \{x_n=0\}\\
    u_\Om&=\dashint_\Om u\\
    u_{z_0,r}&=u_{Q_r(z_0)}=\dashint_{Q_r(z_0)}u\\
    \|z_0\|&=\left(|x_0|^2+|t_0|\right)^{1/2}\,:\, \text{parabolic norm}\\
    \Ga(u)&=\pa_{Q'_1}\{(x',t)\in Q_1'\,:\, u(x',0,t)=0\}\,:\,\text{free boundary}
\end{align*}

Given $l=k+\gamma$ with $k\in \mathbb{N}\cup\{0\}$ and $0<\gamma\le1$, we use standard notations for parabolic H\"older spaces of functions $H^{l,l/2}$. For $1\le q\le \infty$, we denote $W^{1,0}_q$ and $W^{1,1}_q$ by standard parabolic Sobolev spaces of functions.
We refer to \cites{DanGarPetTo17, JeoPet23} for detailed definition.

We denote the backward heat kernel by
$$
G(x,t)=\begin{cases}
        (-4\pi t)^{-n/2}e^{\frac{|x|^2}{4t}},&t<0\\
        0,&t\ge0,
    \end{cases} 
$$
and write its translations 
$$
G_{z_0}=G(\cdot-x_0,\cdot-t_0).
$$
Given $z_0=(x_0,t_0)\in Q_1'$ and $0<r<1$, we let
\begin{align*}
&\|u\|_{W^{1,0}_2(S_r(t_0),G_{z_0})}:=\left[\int_{S_r(t_0)}\left(u^2+(t_0-t)|\D u|^2\right)G_{z_0}\,dxdt\right]^{1/2},\\
&\|u\|_{W^{1,1}_2(S_r(t_0),G_{z_0})}:=\left[\int_{S_r(t_0)}\left(u^2+(t_0-t)\left(|\D u|^2+(\partial_tu)^2\right)\right)G_{z_0}\,dxdt\right]^{1/2}.
\end{align*}
We say that $u\in \fsz$ if $u\in W^{1,1}_2(\R^n\times(-1,t_0),G_{z_0})\cap W^{1,1}_2(B_1\times(-1,t_0))\cap L^\infty(\R^n\times(-1,t_0))$. We define the associated norm by
$$
\|u\|_{\fsz}:=\|u\|_{W^{1,1}_2(\R^n\times(-1,t_0),G_{z_0})}+\|u\|_{W^{1,1}_2(B_1\times(-1,t_0))}+\|u\|_{L^\infty(\R^n\times(-1,t_0))}.
$$
In addition, we say that $u\in \fs$ if $u\in \fsz$ for every $z_0\in Q'_1$ and
$$
\|u\|_{\fs}:=\sup_{z_0\in Q'_1}\|u\|_{W^{1,1}_2(\R^n\times(-1,t_0),G_{z_0})}+\|u\|_{W^{1,1}_2(Q_1)}+\|u\|_{L^\infty(S_1)}<\infty.
$$

%%%%%%%%%%%%%%%%%%%%%%%%%%%%%%%%%%%%%%%%%%%

\subsection{Preliminaries}

The following regularity result for unweighted almost minimizers was proved in \cite{JeoPet23}.

\begin{theorem}\label{thm:par-alm-min-grad-holder}
Let $u$ be an unweighted almost minimizer for the parabolic Signorini problem in $Q_1$. Then 
\begin{enumerate}
    \item $u\in H^{\sigma,\sigma/2}(Q_1)$ for every $0<\sigma<1$;
    \item $\D u\in H^{\be,\be/2}(Q_1^\pm\cup Q'_1)$ for some $\be=\be(n,\al)>0$.
\end{enumerate}
\end{theorem}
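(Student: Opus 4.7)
The plan is to exploit the fact that, although an almost minimizer $u$ does not solve a parabolic PDE, on each small cylinder $Q_r(z_0)$ it is close (in an $L^2$-Dirichlet sense) to the unique solution $v$ of the parabolic Signorini problem with zero thin obstacle sharing the same parabolic boundary data as $u$. The gauge $\eta(r)=r^\al$ measures this closeness quantitatively, and the regularity of $v$ from \cite{DanGarPetTo17} then propagates to $u$ through a Campanato-type iteration.

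For part~(1) I would first establish a Caccioppoli-type inequality. Plugging $v$ (extended by $u$ outside $Q_r(z_0)$) into the almost-minimizing variational inequality, using the Signorini variational inequality for $v$ to absorb the cross term $\int\D(u-v)\cdot\D v$, and using $(u-v)|_{t=t_0-r^2}=0$ to obtain a favorable sign on $\int\pa_t(u-v)(u-v)$, one arrives at
\begin{equation*}
\int_{Q_r(z_0)}|\D(u-v)|^2\,dz \le C\,r^\al\int_{Q_r(z_0)}|\D u|^2\,dz.
\end{equation*}
Next, since $v\ge 0$ on $Q'_r(z_0)$ is a genuine Signorini solution, the regularity bounds of \cite{DanGarPetTo17} yield, after a standard scaling, the Campanato decay
\begin{equation*}
\int_{Q_\rho(z_0)}|v-v_{z_0,\rho}|^2 \le C\bigl(\tfrac{\rho}{r}\bigr)^{n+2+2\sigma}\int_{Q_r(z_0)}|v-v_{z_0,r}|^2
\end{equation*}
for every $0<\sigma<1$. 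Writing $u=v+(u-v)$ and combining the two estimates via a parabolic Poincar\'e inequality produces a perturbed decay for $\Phi(r):=\int_{Q_r(z_0)}|u-u_{z_0,r}|^2$ of the form $\Phi(\rho)\le C[(\rho/r)^{n+2+2\sigma}+r^\al]\,\Phi(r)$, and a standard Giaquinta--Giusti-type iteration then gives $u\in H^{\sigma',\sigma'/2}$ for every $\sigma'<1$.

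For part~(2) I would run the same scheme on $\D u$. On each one-sided cylinder $Q^\pm_r(z_0)\cup Q'_r(z_0)$, the parabolic Signorini theory of \cite{DanGarPetTo17} provides H\"older continuity of $\D v$ with some exponent $\gamma>0$, hence the Campanato decay for $\int|\D v-(\D v)_{z_0,\rho}|^2$. Combined with the $L^2$-estimate on $\D(u-v)$ above, this yields a perturbed decay for the $L^2$ mean oscillation of $\D u$ on one-sided cylinders, and iteration produces $\D u\in H^{\be,\be/2}(Q_1^\pm\cup Q_1')$ with $\be=\be(n,\al)>0$.

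The main obstacle is the two-sided balance in the perturbed Campanato iteration: the geometric factor $(\rho/r)^{n+2+2\gamma}$ from the Signorini comparison must dominate the additive defect $r^\al$ at every scale, which pins $\be$ down as an explicit function of $\gamma$ and $\al$. A secondary difficulty is producing the comparison solution $v$ on each cylinder with quantitative bounds depending only on $\|u\|_{L^\infty}$: this requires solvability of the parabolic Signorini problem with rough lateral and initial traces obtained from $u$, and a preliminary local boundedness estimate for $u$, which one obtains via a Moser-type argument applied directly to the almost-minimizing inequality.
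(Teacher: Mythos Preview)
The paper does not prove this theorem; it is quoted in the Preliminaries section with the sentence ``The following regularity result for unweighted almost minimizers was proved in \cite{JeoPet23},'' and no argument is given here. So there is no in-paper proof to compare against.

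That said, your outline is the standard comparison-plus-Campanato strategy for almost minimizers and is, by every indication, the approach of \cite{JeoPet23}: it mirrors the elliptic version carried out in \cite{JeoPet21}, and the core comparison estimate you write down,
\[
\int_{Q_r(z_0)}|\D(u-v)|^2 \le C\,r^\al\int_{Q_r(z_0)}|\D u|^2,
\]
is exactly the unweighted analogue of Lemma~\ref{lem:par-sig-alm-min-diff-est-weight} in the present paper. The two-step bootstrap (first Morrey/Campanato growth for $u$ itself, then for $\D u$ on one-sided cylinders using the $C^{1}$-up-to-$Q'$ regularity of the Signorini replacement) is the expected route.

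One minor caution: the ``Moser-type argument applied directly to the almost-minimizing inequality'' for preliminary local boundedness is not how this is done, and it is not clear such an iteration is available since $u$ satisfies no equation or Caccioppoli inequality in the usual sense. In practice one does not need $L^\infty$ a priori: the comparison estimate together with the energy bound $\int_{Q_{r/2}}|\D v|^2\le C\int_{Q_r}|\D u|^2$ already yields Morrey decay of $\int_{Q_\rho}|\D u|^2$, from which H\"older continuity of $u$ follows by Campanato, and the gradient step then proceeds with $u$ already continuous. This is a matter of ordering the steps rather than a genuine gap in your plan.
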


Moreover, the authors showed in \cite{JeoPet23} that if $u$ is an almost caloric function, then a stronger result than $(2)$ in Theorem~\ref{thm:par-alm-min-grad-holder} holds:
$$
\D u\in H^{\al/2,\al/4}(Q_1).
$$
Here, an almost caloric function essentially is an unweighted almost minimizer without the obstacle condition; we refer to \cite{JeoPet23}*{Definitions~2.1-2.2} for its precise definition and \cite{JeoPet23}*{Theorem~2.8} for its regularity result.

By using Theorem~\ref{thm:par-alm-min-grad-holder} and the above Hölder continuity of spatial gradients of almost caloric functions across the thin space $Q'_1$, we can follow the argument in \cite{JeoPet21}*{Lemma~4.7} to derive the following complementarity condition.

\begin{lemma}[Complementarity condition]\label{lem:par-compl-cond}
Let $u$ be an unweighted almost minimizer for the parabolic Signorini problem in $Q_1$, even in $x_n$-variable. Then $u$ satisfies the following complementarity condition $$
u\pa_{x_n}^+u=0\quad\text{on }Q'_1.
$$
In addition, we define
$$
\wDu(x',x_n,t):=\begin{cases}
    \D u(x',x_n,t),&x_n\ge0,\\
    \D u(x',-x_n,t),&x_n<0,
\end{cases}
$$
the even extension of $\D u$ from $Q_1^+$ to $Q_1$. If $z_0\in \Ga(u)$, then $$
u(z_0)=0\quad\text{and}\quad |\wDu(z_0)|=0.
$$
\end{lemma}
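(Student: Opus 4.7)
The argument is to adapt the elliptic template of \cite{JeoPet21}*{Lemma~4.7} to the parabolic setting, with the regularity inputs supplied by Theorem~\ref{thm:par-alm-min-grad-holder} together with the Hölder continuity across $Q'_1$ of spatial gradients of almost caloric functions recalled from \cite{JeoPet23}*{Theorem~2.8}. At a point $z_0\in Q'_1$ with $u(z_0)>0$, the plan is to exhibit a small cylinder on which $u$ is in fact an almost caloric function, so that $\D u$ is continuous across the thin space; combined with the evenness of $u$ in $x_n$, this will force $\pa_{x_n}^+u(z_0)=0$.

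For the complementarity identity $u\pa_{x_n}^+u=0$ on $Q'_1$, the case $u(z_0)=0$ is trivial, so fix $z_0$ with $u(z_0)>0$. By Theorem~\ref{thm:par-alm-min-grad-holder}(1), $u\ge c>0$ on some $Q_r(z_0)$. The obstacle constraint is inactive there: for any competitor $v$ with $v-u\in L^2(t_0-r^2,t_0;W^{1,2}_0(B_r(x_0)))$, the truncation $\max(v,0)$ is an admissible obstacle-respecting competitor, and its discrepancy with $v$ is supported on $\{v<0\}\subset\{|v-u|>c\}$, whose measure and relevant Sobolev norms are controlled by $\|\D(v-u)\|_{L^2}^2$ via parabolic Sobolev--Poincaré. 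Using this to compare $\max(v,0)$ with $v$ in both the Dirichlet and the parabolic terms of the almost minimization inequality, one sees that $u$ is an almost caloric function on $Q_{r/2}(z_0)$. Then \cite{JeoPet23}*{Theorem~2.8} gives $\D u\in H^{\al/2,\al/4}(Q_{r/2}(z_0))$, and since $u$ is even in $x_n$, $\pa_{x_n}u$ is odd in $x_n$ and continuous, forcing $\pa_{x_n}u(x_0',0,t_0)=0$ and hence $\pa_{x_n}^+u(z_0)=0$.

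For $z_0\in\Ga(u)$, Hölder continuity yields $u(z_0)=0$ since $z_0$ lies in the closure of the coincidence set $\{u=0\}\cap Q'_1$. By Theorem~\ref{thm:par-alm-min-grad-holder}(2), $\wDu$ is continuous on $Q'_1$. The tangential components $\pa_{x_i}u$ for $i<n$ vanish on the relative interior of $\{u=0\}\cap Q'_1$ (where $u$ is locally $\equiv 0$) and hence, by continuity on $Q'_1$, at the boundary point $z_0$. The normal component $\pa_{x_n}^+u$ vanishes on the open set $\{u>0\}\cap Q'_1$ by the first part, and again continuity of $\wDu$ on $Q'_1$ gives $\pa_{x_n}^+u(z_0)=0$. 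Combining these yields $|\wDu(z_0)|=0$.

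The principal new obstacle relative to the elliptic case is the truncation step that upgrades the almost Signorini inequality to the almost caloric inequality: in the parabolic energy one must additionally control the term $2\pa_tu(u-v)$ when $v$ is replaced by $\max(v,0)$, using $\pa_tu\in L^2$ together with the Sobolev--Poincaré control on $|\{v<0\}|$. Beyond this step, the remainder is a direct parabolic transcription of \cite{JeoPet21}*{Lemma~4.7}.
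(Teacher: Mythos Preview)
Your plan is precisely the route the paper indicates: the paper gives only one sentence, citing \cite{JeoPet21}*{Lemma~4.7} together with the almost-caloric gradient regularity from \cite{JeoPet23}, so your overall strategy matches and is already more detailed than the paper's own argument.

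There is, however, a genuine gap in your justification of $|\wDu(z_0)|=0$. You deduce that the tangential derivatives $\pa_{x_i}u$, $i<n$, vanish at $z_0\in\Ga(u)$ by saying they vanish on the relative interior of $\{u=0\}\cap Q'_1$ and then invoking continuity. But $z_0$ need not lie in the closure of that relative interior: nothing so far rules out a coincidence set with empty interior in $Q'_1$, in which case your continuity step is vacuous. The direct argument is simpler and robust: since $u\ge 0$ on $Q'_1$ with $u(z_0)=0$, the point $(x_0',t_0)$ is an interior minimum of the $C^1$-in-$x'$ function $u(\cdot,0,t_0)$ (regularity from Theorem~\ref{thm:par-alm-min-grad-holder}(2)), so first-order optimality gives $\pa_{x_i}u(z_0)=0$ for $i<n$. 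A secondary caution concerns your truncation step: replacing $v$ by $\max(v,0)$ produces the extra term $2\int_{Q_r}\pa_tu\,v^-$, which after Poincar\'e is an \emph{additive} error of order $r\|\pa_tu\|_{L^2(Q_r)}\|\D(u-v)\|_{L^2}$, not obviously absorbable into a multiplicative gauge. If the definitions in \cite{JeoPet23} do not accommodate this, you can bypass truncation altogether by observing that the gradient-regularity proof compares $u$ with its caloric replacement $h$ sharing $u$'s parabolic-boundary data; since $u\ge c>0$ on $\pa_pQ_r$, the maximum principle gives $h\ge c>0$, so $h$ is already an admissible Signorini competitor and no truncation is needed.
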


%%%%%%%%%%%%%%%%%%%%%%%%%%%%%%%%%%%%%%%%

\section{Weiss- and Almgren-type monotonicity formulas}
The purpose of this section is to establish monotonicity formulas of Weiss- and Almgren-type. They will play a crucial role in the analysis of the free boundary.

We first prove the Weiss-type monotonicity formula, which represents one of the most technical aspects of this paper. In its elliptic counterpart \cite{JeoPet21}, the authors derived the formula by comparing almost minimizers and homogeneous replacements, inspired by the approach in \cite{Wei99}. In the current parabolic case, we compare almost minimizers and parabolically homogeneous replacements, and utilize conformal self-similar coordinates. For its proof, we need the following auxiliary results.

\begin{lemma}\label{lem:par-weiss-lem}
Fix $\ka_0>2$ and $0<\e\le\al<1$. For $0<\ka<\ka_0$ and $0\le\rho<r$, let 
$$
\Phi(r):=\Phi_{\rho,\ka,\al}(r)=\frac{e^{ar^\al}}{r^{2\ka+2}-\rho^{2\ka+2}},\quad \Psi(r):=\Psi_{\rho,\ka,\ka_0,\al,\e}(r)=\frac{(1-br^\e)e^{ar^\al}}{r^{2\ka+2}-\rho^{2\ka+2}}
$$
with
$$   
a=\frac{8(\ka+1)}{\al},\quad b=\frac{128(\ka_0+1)}{\e}.
$$
Then, there is a small constant $r_0=r_0(\ka_0,\e)=\frac{r_0(\e)}{\ka_0^{2/\e}}>0$ such that for $0\le\rho<r<r_0$ with $\rho/r\le 1/\sqrt{2}$,
\begin{align}
    \label{eq:par-weiss-lem-1}
    &\Phi'(r)\le 0,\\
    \label{eq:par-weiss-lem-2}
    & \frac{\Phi'(r)}{1-r^\al}-\Psi'(r)\ge -\frac{\left(2\ka+2-\e/4\right)b\Phi(r)r^{2\ka+1+\e}}{r^{2\ka+2}-\rho^{2\ka+2}},\\
    \label{eq:par-weiss-lem-3}
    &\frac{1+r^\al}{1-r^\al}\Phi'(r)+\frac{2(\ka+1)r^{2\ka+1}}{r^{2\ka+2}-\rho^{2\ka+2}}\Phi(r)\ge 0,\\
    \label{eq:par-weiss-lem-4}
    &-\frac{\Phi'(r)}{1-r^\al}-\frac{2(\ka+1)r^{2\ka+1}}{r^{2\ka+2}-\rho^{2\ka+2}}\Psi(r)\ge \frac{\left(2\ka+2-\e/8\right)b\Phi(r)r^{2\ka+1+\e}}{r^{2\ka+2}-\rho^{2\ka+2}}.
\end{align}
\end{lemma}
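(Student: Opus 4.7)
These are four algebraic/calculus estimates on the explicit functions $\Phi$ and $\Psi$, and my plan is to handle them by direct computation in the order \eqref{eq:par-weiss-lem-1}, \eqref{eq:par-weiss-lem-3}, \eqref{eq:par-weiss-lem-2}, \eqref{eq:par-weiss-lem-4}. The starting point is the logarithmic derivative
\[
\frac{\Phi'(r)}{\Phi(r)} = a\al r^{\al-1} - \frac{(2\ka+2)r^{2\ka+1}}{r^{2\ka+2}-\rho^{2\ka+2}},
\]
together with $\Psi'(r) = (1-br^\e)\Phi'(r) - b\e r^{\e-1}\Phi(r)$ obtained from $\Psi=(1-br^\e)\Phi$. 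The assumption $\rho/r\leq 1/\sqrt{2}$ yields $\rho^{2\ka+2}\leq 2^{-(\ka+1)}r^{2\ka+2}\leq r^{2\ka+2}/2$, so that $r^{2\ka+2}/(r^{2\ka+2}-\rho^{2\ka+2})\in[1,2]$; this is the key bound used throughout.

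For \eqref{eq:par-weiss-lem-1}, the choice $a\al=8(\ka+1)$ combined with $\frac{(2\ka+2)r^{2\ka+1}}{r^{2\ka+2}-\rho^{2\ka+2}}\geq \frac{2(\ka+1)}{r}$ reduces $\Phi'(r)\leq 0$ to the condition $r^\al\leq 1/4$. For \eqref{eq:par-weiss-lem-3}, substituting the $\Phi'$ formula and factoring out $\Phi(r)r^{\al-1}/(1-r^\al)$ produces a bracket $(1+r^\al)a\al-\frac{(4\ka+4)r^{2\ka+2}}{r^{2\ka+2}-\rho^{2\ka+2}}$ which, by the key bound and $a\al=8(\ka+1)$, is nonnegative; hence \eqref{eq:par-weiss-lem-3} holds without any further smallness of $r$.

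For \eqref{eq:par-weiss-lem-2} and \eqref{eq:par-weiss-lem-4} I would start from the identity
\[
\frac{\Phi'(r)}{1-r^\al}-\Psi'(r) = \Phi'(r)\Bigl[\tfrac{r^\al}{1-r^\al}+br^\e\Bigr]+b\e r^{\e-1}\Phi(r)
\]
and its analogue for the LHS of \eqref{eq:par-weiss-lem-4}. After substituting the $\Phi'$ formula and regrouping by the scalings $r^\al$, $br^\e$, $(1-r^\al)^{-1}$, the dominant $(2\ka+2)br^\e$-contribution on the left cancels against $(2\ka+2-\e/4)br^\e$ (respectively $(2\ka+2-\e/8)br^\e$) on the right, thanks to the small margins $\e/4$ and $\e/8$ that are deliberately sacrificed in the exponents; the surviving deficit reduces to inequalities of the form $\tfrac{4(\ka+1)r^{\al-\e}}{1-r^\al}\leq b\e/2$, which, by $b\e=128(\ka_0+1)$ and $\al\geq \e$, hold uniformly in $\ka\in(0,\ka_0)$ once $r^\al$ is sufficiently small.

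The main obstacle is purely the bookkeeping: each inequality decomposes into several terms of slightly different scalings in $r$ and $\ka$, and the argument hinges on the margins built into $a\al=8(\ka+1)$ (a factor $4$ above the critical value $2(\ka+1)$ from $\Phi$-differentiation) and $b\e=128(\ka_0+1)$ (a factor $4$ above the corresponding critical value $32(\ka_0+1)$) providing enough slack to absorb every lower-order cross term uniformly in $\ka<\ka_0$, so producing the stated $r_0=r_0(\ka_0,\e)$.
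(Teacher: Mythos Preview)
Your proposal is correct and follows essentially the same approach as the paper: direct computation via the logarithmic derivative $\Phi'/\Phi$, the relation $\Psi'=(1-br^\e)\Phi'-b\e r^{\e-1}\Phi$, and the key bound $r^{2\ka+2}/(r^{2\ka+2}-\rho^{2\ka+2})\in[1,2]$ from $\rho/r\le 1/\sqrt{2}$. The only difference is organizational---for \eqref{eq:par-weiss-lem-2} and \eqref{eq:par-weiss-lem-4} you start from the combined identity $\frac{\Phi'}{1-r^\al}-\Psi'=\Phi'\bigl[\frac{r^\al}{1-r^\al}+br^\e\bigr]+b\e r^{\e-1}\Phi$, whereas the paper multiplies through by $r^{2\ka+2}-\rho^{2\ka+2}$ and bounds the $\Phi'$- and $\Psi'$-contributions separately before combining; the resulting term-by-term cancellations are the same.
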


\begin{proof}
We first prove \eqref{eq:par-weiss-lem-1}. By using $0<\e<\al$, we simply compute
\begin{align*}
    \Phi'(r)&=\left(a\al r^\al-\frac{(2\ka+2)r^{2\ka+2}}{r^{2\ka+2}-\rho^{2\ka+2}}\right)\frac{\Phi(r)}r\\
    &\le ((8\ka+8)r^\al-(2\ka+2))\frac{\Phi(r)}r\le 0,\quad r<r_0(\e).
\end{align*}
For \eqref{eq:par-weiss-lem-2}, we note $r^{2\ka+2}-\rho^{2\ka+2}\ge \left(1-\left(\rho/r\right)^2\right)r^{2\ka+2}\ge \frac12r^{2\ka+2}$ and get
\begin{align*}
\left(r^{2\ka+2}-\rho^{2\ka+2}\right)\Phi'(r)&=\left(\left(r^{2\ka+2}-\rho^{2\ka+2}\right)a\al r^\al-(2\ka+2)r^{2\ka+2}\right)\frac{\Phi(r)}r\\
&\ge \left(1/2a\al r^\al-(2\ka+2)\right)\Phi(r)r^{2\ka+1}.
\end{align*}
Moreover, using $r^{2\ka+2}-\rho^{2\ka+2}\ge \frac12r^{2\ka+2}$ again along with $b\e r^\e\ge a\al r^\al\ge (1-br^\e)a\al r^\al$, we find
\begin{align*}
    &\left(r^{2\ka+2}-\rho^{2\ka+2}\right)\Psi'(r)\\
    &=\left(r^{2\ka+2}-\rho^{2\ka+2}\right)\left(-b\e r^{\e-1}\Phi(r)+(1-br^\e)\Phi'(r)\right)\\
    &=\left(\left(r^{2\ka+2}-\rho^{2\ka+2}\right)(-b\e r^\e+(1-br^\e)a\al r^\al)-(1-br^\e)(2\ka+2)r^{2\ka+2}\right)\frac{\Phi(r)}r\\
    &\le\left(1/2(-b\e r^\e+(1-br^\e)a\al r^\al)-(1-br^\e)(2\ka+2)\right)\Phi(r)r^{2\ka+1}. 
\end{align*}
Thus, we have \begin{align*}
    &(r^{2\ka+2}-\rho^{2\ka+2})\left(\frac{\Phi'(r)}{1-r^\al}-\Psi'(r)\right)\\
    &\begin{multlined}[t]\ge\bigg(1/2a\al r^\al-(2\ka+2)+\frac{1/2a\al}{1-r^\al}r^{2\al}-\frac{2\ka+2}{1-r^\al}r^\al\\
    +1/2b\e r^\e-1/2(1-br^\e)a\al r^\al+(2\ka+2)-(2\ka+2)br^\e\bigg)\Phi(r)r^{2\ka+1}\end{multlined}\\
    &\ge -\left((2\ka+2-\e/2)br^\e+\frac{2\ka+2}{1-r^\al}r^\al\right)\Phi(r)r^{2\ka+1}\\
    &\ge -\left(2\ka+2-\e/4\right)br^\e\Phi(r)r^{2\ka+1},
\end{align*}
where the last inequality follows from $\frac{2\ka+2}{1-r^\al}\le16(\ka+1)\le b\e/8$ and $r^\al\le r^\e$.
Regarding \eqref{eq:par-weiss-lem-3}, we use $r^{2\ka+2}-\rho^{2\ka+2}\ge\frac12r^{2\ka+2}$ once again to obtain
\begin{align*}
    &\frac{1+r^\al}{1-r^\al}\Phi'(r)+\frac{2(\ka+1)r^{2\ka+1}}{r^{2\ka+2}-\rho^{2\ka+2}}\Phi(r)\\
    &=\left(\frac{1+r^\al}{1-r^\al}\left(a\al r^\al-\frac{2(\ka+1)r^{2\ka+2}}{r^{2\ka+2}-\rho^{2\ka+2}}\right)+\frac{2(\ka+1)r^{2\ka+2}}{r^{2\ka+2}-\rho^{2\ka+2}}\right)\frac{\Phi(r)}r\\
    &=\left((1+r^\al)a\al r^\al-\frac{4(\ka+1)r^{2\ka+2+\al}}{r^{2\ka+2}-\rho^{2\ka+2}}\right)\frac{\Phi(r)}{(1-r^\al)r}\\
    &\ge \left((1+r^\al)a\al-8(\ka+1)\right)\frac{\Phi(r)}{(1-r^\al)r^{1-\al}}\ge 0.
\end{align*}
Finally, we prove \eqref{eq:par-weiss-lem-4}.
\begin{align*}
    &(r^{2\ka+2}-\rho^{2\ka+2})\left(-\frac{\Phi'(r)}{1-r^\al}-\frac{2(\ka+1)r^{2\ka+1}}{r^{2\ka+2}-\rho^{2\ka+2}}\Psi(r)\right)\\
    &=\left(-\frac{a\al}{1-r^\al}(r^{2\ka+2}-\rho^{2\ka+2})r^\al+\frac{2\ka+2}{1-r^\al}r^{2\ka+2}-(1-br^\e)(2\ka+2)r^{2\ka+2}\right)\frac{\Phi(r)}r\\
    &\ge\left(-\frac{a\al}{1-r^\al}r^\al+(2\ka+2)br^\e\right)\Phi(r)r^{2\ka+1}\ge \left(2\ka+2-\e/8\right)br^\e\Phi(r)r^{2\ka+1},
\end{align*}
where the last step follows from $\frac{a\al}{1-r^\al}\le 16(\ka+1)\le b\e/8$ and $r^\al\le r^\e$.
\end{proof}

As previously mentioned, we will make use of conformal self-similar coordinates. Given constants $0<r<1$ and $\ka>0$ and a function $u$ defined in $S_r$, we define
\begin{align}
    \label{eq:chan-coor}
    \wu(y,\tau)=\wu_\ka(y,\tau):=e^{\ka\tau/2}u\left(2e^{-\tau/2}y,-e^{-\tau}\right), \quad (y,\tau)\in\R^n\times(-2\ln r,\infty).
\end{align}
In addition, we let
\begin{align}
    \label{eq:hom-rep}
    w(x,t):=\left(\frac{\sqrt{-t}}{r}\right)^\ka u\left(\frac{r}{\sqrt{-t}}x,-r^2\right),\quad (x,t)\in S_r
\end{align}
be the parabolically $\ka$-homogeneous replacement of $u$ in $S_r$. From its construction, it is easily seen that $w$ satisfies the homogeneity
\begin{align}
    \label{eq:hom-rep-hom}
    \ka w-x\cdot\D w-2t\partial_tw=0.
\end{align}
Then, $\ww(y,\tau):=e^{\ka\tau/2}w\left(2e^{-\tau/2}y,-e^{-\tau}\right)$ satisfies
\begin{align}
    \label{eq:hom-rep-time-deriv}
    \pa_\tau\ww(y,\tau)=0\quad\text{for } (y,\tau)\in\R^n\times(-2\ln r,\infty),
\end{align}
which implies that $\ww(y)=\ww(y,\tau)$ is independent of $\tau$-variable. This, along with the fact that $w(x,-r^2)=u(x,-r^2)$ for $x\in\R^n$, yields
\begin{align}
    \label{eq:hom-rep-iden}
    \ww(y)=\wu(y,-2\ln r),\quad y\in\R^n.
\end{align}

\begin{lemma}\label{lem:par-(u-w)-transf}
Let $u\in \fso$. Then, for $\ka>0$ and $0\le\rho<r<1$, 
\begin{align}
    \label{eq:par-(u-w)-transf-1}\begin{split}
    &\int_{S_r\sm S_\rho}(\ka u-x\cdot\D u-2t\pa_tu)(u-w)G\\
    &=\frac{\rho^{2\ka+2}}{\pi^{n/2}}\int_{\R^n}(\wu(y,-2\ln\rho)-\wu(y,-2\ln r))^2e^{-|y|^2}\,dy+(\ka+1)\int_{S_r\sm S_\rho}(u-w)^2G,
\end{split}\end{align}
where $\wu$ and $w$ are as in \eqref{eq:chan-coor} and \eqref{eq:hom-rep}, respectively. In particular, \begin{align}
    \label{eq:par-(u-w)-transf-2}
    \int_{S_r}(\ka u-x\cdot\D u-2t\pa_tu)(u-w)G=(\ka+1)\int_{S_r}(u-w)^2G.
\end{align}

\end{lemma}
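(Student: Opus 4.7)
The plan is to change to the conformal self-similar coordinates $(y,\tau)$ defined by $x=2e^{-\tau/2}y$, $t=-e^{-\tau}$ (so that $\tau=-\ln(-t)$ and $y=x/(2\sqrt{-t})$), in which the scaling vector field becomes a pure $\tau$-translation. A direct computation gives the Jacobian $dx\,dt = 2^{n}e^{-(n/2+1)\tau}\,dy\,d\tau$ and
\[
G(x,t)\,dx\,dt \;=\; \pi^{-n/2}e^{-\tau}e^{-|y|^{2}}\,dy\,d\tau.
\]
The crucial operator identity is obtained by differentiating $\wu(y,\tau)=e^{\ka\tau/2}u(2e^{-\tau/2}y,-e^{-\tau})$ in $\tau$ and substituting back $x=2e^{-\tau/2}y$, $t=-e^{-\tau}$, which yields
\[
\pa_\tau \wu(y,\tau)\;=\;\tfrac{1}{2}e^{\ka\tau/2}\bigl(\ka u-x\cdot\D u-2t\pa_t u\bigr),
\]
so the first factor of the integrand on the left-hand side transforms to $2e^{-\ka\tau/2}\pa_\tau\wu$.

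Next, by \eqref{eq:hom-rep-iden} the homogeneous replacement satisfies $\ww(y,\tau)=\wu(y,-2\ln r)$ for all $\tau$, so setting
\[
v(y,\tau):=\wu(y,\tau)-\wu(y,-2\ln r),
\]
we have $u-w=e^{-\ka\tau/2}v$ and $\pa_\tau v=\pa_\tau \wu$. Multiplying the two transformed factors together, the integrand reorganizes neatly as
\[
(\ka u-x\cdot\D u-2t\pa_tu)(u-w)\,G\,dx\,dt\;=\;\pi^{-n/2}e^{-(\ka+1)\tau}\pa_\tau(v^{2})\,e^{-|y|^{2}}\,dy\,d\tau.
\]
The domain $S_r\setminus S_\rho$ corresponds to $\tau\in[-2\ln r,-2\ln\rho]$. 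After a one-dimensional integration by parts in $\tau$, using that $v(y,-2\ln r)\equiv 0$ kills one boundary term while the other produces the factor $e^{-(\ka+1)(-2\ln\rho)}=\rho^{2\ka+2}$, we obtain
\[
\pi^{-n/2}\rho^{2\ka+2}\!\int_{\R^{n}}(\wu(y,-2\ln\rho)-\wu(y,-2\ln r))^{2}e^{-|y|^{2}}\,dy + (\ka+1)\,\pi^{-n/2}\!\!\int\!\!\int e^{-(\ka+1)\tau}v^{2}e^{-|y|^{2}}\,d\tau\,dy,
\]
and the last integral converts back, via $e^{-(\ka+1)\tau}v^{2}=e^{-\tau}(u-w)^{2}$, to $(\ka+1)\int_{S_r\setminus S_\rho}(u-w)^{2}G$, giving \eqref{eq:par-(u-w)-transf-1}.

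For the special case \eqref{eq:par-(u-w)-transf-2}, I would send $\rho\to 0^{+}$ in \eqref{eq:par-(u-w)-transf-1}. The non-trivial point is to kill the first term, but expanding the square and using $\rho^{2\ka+2}\wu(y,-2\ln\rho)^{2}=\rho^{2}u(2\rho y,-\rho^{2})^{2}\le \rho^{2}\|u\|_{L^\infty}^{2}$, which is admissible since $u\in\fso$ enforces $u\in L^{\infty}$, the boundary term vanishes in the limit; similarly $\rho^{2\ka+2}\int \wu(y,-2\ln r)^{2}e^{-|y|^{2}}dy\to 0$, and Cauchy--Schwarz handles the cross term. The main (essentially only) obstacle is the bookkeeping of the change of variables and verifying the operator identity $2e^{-\ka\tau/2}\pa_\tau\wu=\ka u-x\cdot\D u-2t\pa_tu$; after that the rest is a one-line integration by parts.
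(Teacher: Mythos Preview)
Your proof is correct and follows essentially the same route as the paper: pass to the self-similar coordinates $(y,\tau)$, recognize the integrand as $\pi^{-n/2}e^{-(\ka+1)\tau}\pa_\tau\bigl((\wu-\ww)^{2}\bigr)e^{-|y|^{2}}$, integrate by parts in $\tau$, and use $\wu=\ww$ at $\tau=-2\ln r$ together with the $L^\infty$ bound on $u$ to handle the $\rho\to0$ limit. The only cosmetic difference is that the paper first invokes the homogeneity $\ka w-x\cdot\D w-2t\pa_t w=0$ to replace $u$ by $u-w$ in the scaling operator before changing variables, whereas you reach the same point via the identity $\pa_\tau v=\pa_\tau\wu$ (equivalently $\pa_\tau\ww=0$).
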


\begin{proof}
By using \eqref{eq:hom-rep-hom} and \eqref{eq:hom-rep-iden}, we obtain \eqref{eq:par-(u-w)-transf-1}:
\begin{align*}
    &\int_{S_r\sm S_\rho}(\ka u-x\cdot \D u-2t\pa_tu)(u-w)G\,dxdt\\
    &=\int_{S_r\sm S_\rho}(\ka(u-w)-x\cdot\D(u-w)-2t\pa_t(u-w))(u-w)G\,dxdt\\
    &=\frac12\int_{S_r\sm S_\rho}(2\ka(u-w)^2-x\cdot\D((u-w)^2)-2t\pa_t((u-w)^2))G\,dxdt\\
    &=\frac1{\pi^{n/2}}\int_{\R^n\times(-2\ln r,-2\ln\rho)}\pa_\tau((\wu-\ww)^2)e^{-|y|^2}e^{-(\ka+1)\tau}\,dyd\tau\\
    &=\frac1{\pi^{n/2}}\int_{\R^n\times\{-2\ln\rho\}}(\wu-\ww)^2e^{-|y|^2}\rho^{2\ka+2}\,dy\\
    &\qquad+\frac{\ka+1}{\pi^{n/2}}\int_{\R^n\times(-2\ln r,-2\ln\rho)}(\wu-\ww)^2 e^{-|y|^2}e^{-(\ka+1)\tau}\,dyd\tau\\
    &=\frac{\rho^{2\ka+2}}{\pi^{n/2}}\int_{\R^n}(\wu(y,-2\ln\rho)-\wu(y,-2\ln r))^2e^{-|y|^2}\,dy+(\ka+1)\int_{S_r\sm S_\rho}(u-w)^2G.
\end{align*}
Moreover, \eqref{eq:par-(u-w)-transf-2} follows from \eqref{eq:par-(u-w)-transf-1} by taking $\rho\to0$ with the observation 
\begin{equation*}
\lim_{\rho\to 0}\rho^{2\ka+2}(\wu(y,-2\ln\rho)-\wu(y,-2\ln r))^2\le \lim_{\rho\to 0}\rho^{2\ka+2}((\rho^{-\ka}+r^{-\ka})\|u\|_{L^\infty(S_r)})^2=0.
\end{equation*}
\end{proof}

We now prove the Weiss-type monotonicity formula with the help of Lemmas~\ref{lem:par-weiss-lem} and \ref{lem:par-(u-w)-transf}. We note that for any $\ka>0$, the weighted almost parabolic Signorini property \eqref{eq:par-alm-min-def} is equivalent to
\begin{align}\label{eq:par-alm-min-def-kappa}\begin{split}
    &\begin{multlined}\int_{S_r(t_0)\sm S_\rho(t_0)}\big[ 2(1-\eta(r))(t_0-t)|\D u|^2-\ka u^2\\
    +2(\ka u-(x-x_0)\cdot \D u-2(t-t_0)\pa_tu)(u-w)\big]G_{z_0}\,dxdt\end{multlined}\\
    &\qquad\begin{multlined}\le \int_{S_r(t_0)\sm S_\rho(t_0)}\big[ 2(1+\eta(r))(t_0-t)|\D w|^2-\ka w^2\\+\left(\ka+2\eta(r)\right)(u-w)^2\big]G_{z_0}\,dxdt+2\|u\|_{\fsz}^2e^{-\frac1r}.\end{multlined}
\end{split}\end{align}

\begin{theorem}[Weiss-type monotonicity formula]\label{thm:par-weiss} Fix $\ka_0>2$, $0<\de<2$ and $0<\e\le\al<1$. Suppose that for $z_0=(x_0,t_0)\in Q'_{1/2}$, $u\in \fsz$ satisfies the weighted almost parabolic Signorini property at $z_0$. For $0<\ka<\ka_0$, set 
\begin{multline*}
W_{\ka,\al,\e,\de,\rho}(r,u,z_0)\\
\begin{multlined}:=\frac{e^{ar^\al}}{r^{2\ka+2}-\rho^{2\ka+2}}\Bigg(\int_{S_r(t_0)\sm S_\rho(t_0)}\left(2(t_0-t)|\D u|^2-\ka(1-br^\e)u^2\right)G_{z_0}\,dxdt\\+\|u\|_{\fsz}^2e^{-\frac1{r}}r^{-\de}\Bigg),\end{multlined}
\end{multline*}
where constants $a,b$ are as in Lemma~\ref{lem:par-weiss-lem}
$$ 
a=\frac{8(\ka+1)}{\al},\quad b=\frac{128(\ka_0+1)}{\e}.
$$
\begin{enumerate}[label=\textup{(\roman*)},leftmargin=*,widest=ii,align=left]
    \item For $0<\rho<r<r_0=r_0(\ka_0,\e)=\frac{r_0(\e)}{\ka_0^{2/\e}}$ with $\rho/r\le 1/\sqrt{2}$, \begin{multline}
        \label{eq:par-weiss-deriv-1}
        \frac{d}{dr}W_{\ka,\al,\e,\de,\rho}(r,u,z_0)\\
        \ge \frac{(4\ka+2)r^{2\ka+1}\rho^{2\ka+2}}{\pi^{n/2}(r^{2\ka+2}-\rho^{2\ka+2})^2}\int_{\R^n}(\wu(y,-2\ln\rho)-\wu(y,-2\ln r))^2e^{-|y|^2}\,dy,
    \end{multline}
    where $\wu=\wu_\ka$ is as in \eqref{eq:chan-coor}.
    \item When $\rho=0$, for $W_{\ka,\al,\e,\de}=W_{\ka,\al,\e,\de,0}$ and $0<r<r_0=r_0(\ka_0,\al,\e)=\frac{r_0(\al,\e)}{\ka_0^{2/\e}}$,
    \begin{multline}
        \label{eq:par-weiss-deriv-2}
        \frac{d}{dr}W_{\ka,\al,\e,\de}(r,u,z_0)\\
        \ge \frac\ka{2r^{2\ka+3-\e/2}}\left|\int_{S_r}(\ka u-(x-x_0)\cdot\D u-2(t-t_0)\pa_tu)uG_{z_0}\right|.
    \end{multline}
\end{enumerate}
\end{theorem}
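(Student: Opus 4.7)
Following the elliptic prototype in~\cite{JeoPet21}, I would compare the almost minimizer $u$ with its parabolically $\ka$-homogeneous replacement $w$ defined by \eqref{eq:hom-rep}, passing through the conformal self-similar coordinates \eqref{eq:chan-coor} to tame the singular Gaussian weight on the unbounded strip $S_r(t_0)$. By translation invariance, assume $z_0=0$. One first verifies that $w$ is an admissible competitor in \eqref{eq:par-alm-min-def-kappa} on $S_r\sm S_\rho$: the inequality $w\ge 0$ on the thin space is inherited from $u\ge 0$ via the nonnegative prefactor $(\sqrt{-t}/r)^\ka$, while the weighted boundary matching follows from $w(\cdot,-r^2)=u(\cdot,-r^2)$ together with the control provided by $u\in\fsz$.

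\textbf{Comparison inequality.} Applying \eqref{eq:par-alm-min-def-kappa} with $v=w$ produces an upper bound for $\int_{S_r\sm S_\rho}[2(1-\eta(r))(t_0-t)|\D u|^2-\ka u^2]G_{z_0}+2\int(\ka u-x\cdot\D u-2t\pa_t u)(u-w)G_{z_0}$ in terms of the analogous $w$-quantities. The cross term is processed via Lemma~\ref{lem:par-(u-w)-transf}: by \eqref{eq:par-(u-w)-transf-1} it equals $2(\ka+1)\int(u-w)^2 G_{z_0}$ plus the nonnegative homogeneity-defect $\tfrac{2\rho^{2\ka+2}}{\pi^{n/2}}\int_{\R^n}(\wu(y,-2\ln\rho)-\wu(y,-2\ln r))^2 e^{-|y|^2}\,dy$, which is precisely the positive quantity appearing on the right of \eqref{eq:par-weiss-deriv-1}. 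The $w$-side integrals on $S_r\sm S_\rho$ are then collapsed via \eqref{eq:chan-coor}: the $\tau$-independence \eqref{eq:hom-rep-time-deriv} of $\ww$ reduces the $\tau$-integration to the explicit factor $\int_{-2\ln r}^{-2\ln\rho}e^{-(\ka+1)\tau}\,d\tau=(r^{2\ka+2}-\rho^{2\ka+2})/(\ka+1)$ multiplied by a boundary functional of $\wu(\cdot,-2\ln r)$, i.e., of $u$ on the slice $\{t=t_0-r^2\}$.

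\textbf{Differentiation and Lemma~\ref{lem:par-weiss-lem}.} I would then differentiate $W_{\ka,\al,\e,\de,\rho}(r)$ in $r$: the outer prefactor generates the combinations $\Phi'(r)$ and $\Psi'(r)$, and the annular Leibniz rule produces slice contributions at $\{t=t_0-r^2\}$. Combining the comparison inequality with these derivatives, the four algebraic inequalities in Lemma~\ref{lem:par-weiss-lem} serve distinct roles: \eqref{eq:par-weiss-lem-1} allows dropping a positive slice integral of $|\D u|^2$; \eqref{eq:par-weiss-lem-3} handles the slice contribution from the $(1-br^\e)u^2$ term; \eqref{eq:par-weiss-lem-2} and \eqref{eq:par-weiss-lem-4} absorb the $\eta(r)=r^\al$-errors into $br^\e$-corrections via $\e\le\al$. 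The tail $\|u\|_{\fsz}^2 e^{-1/r}r^{-\de}$ with $\de<2$ leaves room to absorb the $2\|u\|_{\fsz}^2 e^{-1/r}$-error from \eqref{eq:par-alm-min-def-kappa} through $\Phi(r)$. The only surviving nonnegative remainder is precisely the homogeneity-defect on the right of \eqref{eq:par-weiss-deriv-1}.

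\textbf{Part (ii) and main obstacle.} For $\rho=0$ the prefactor $\rho^{2\ka+2}$ kills the homogeneity-defect, so one instead exploits the $(u-w)^2$-quantity that survives on the left of the comparison inequality. Using \eqref{eq:par-(u-w)-transf-2} together with the orthogonality $\int_{S_r}(\ka u-x\cdot\D u-2t\pa_t u)wG_{z_0}=0$ (a consequence of the homogeneity of $w$ and integration by parts in $\tau$ in conformal coordinates, noting that $2\pa_\tau\wu=e^{\ka\tau/2}(\ka u-x\cdot\D u-2t\pa_t u)$ and $\ww$ is $\tau$-independent), one identifies $\int_{S_r}(\ka u-x\cdot\D u-2t\pa_t u)uG_{z_0}=(\ka+1)\int_{S_r}(u-w)^2G_{z_0}$. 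Inserting this identification into the $\rho=0$ version of the comparison inequality and matching against the derivative of $W_{\ka,\al,\e,\de}$ yields \eqref{eq:par-weiss-deriv-2}, with the exponent $2\ka+3-\e/2$ reflecting a half-power loss from the $br^\e$ corrections. The main technical obstacle throughout is the delicate bookkeeping in the absorption step: every constant in Lemma~\ref{lem:par-weiss-lem}, and the specific choices $a=8(\ka+1)/\al$ and $b=128(\ka_0+1)/\e$, is tuned so that the $\eta(r)$-errors propagated through the conformal change of variables close exactly against the $(1-br^\e)$-correction and the $\de$-weighted exponential tail. The singular Gaussian weight on the unbounded strip, combined with the need to simultaneously balance the three sources of error (almost-minimizer defect, $\Phi'/\Psi'$ mismatch, exponential tail), is what makes this parabolic Weiss monotonicity substantially more delicate than its elliptic analogue.
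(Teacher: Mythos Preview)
Your overall strategy matches the paper's: compare $u$ with the homogeneous replacement $w$, process the cross term via Lemma~\ref{lem:par-(u-w)-transf}, differentiate $W_{\ka,\rho}$ and absorb errors through Lemma~\ref{lem:par-weiss-lem}. The structure you sketch for part~(i) is essentially right, though the roles you assign to \eqref{eq:par-weiss-lem-1}--\eqref{eq:par-weiss-lem-4} are shuffled (in the paper, \eqref{eq:par-weiss-lem-1} is what allows the comparison inequality to be inserted with the correct sign after multiplying by $\Phi'\le 0$; \eqref{eq:par-weiss-lem-3} handles the gradient slice term, \eqref{eq:par-weiss-lem-4} the $u^2$ slice term, \eqref{eq:par-weiss-lem-2} the annular $u^2$ term). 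You also omit the Young-inequality balancing (Step~3 in the paper) with parameter $\mu=\e/(12(2\ka+2-\e/4))$ that turns the combination of $\int w^2G$, $\int u^2G$, $\int(u-w)^2G$ into a nonnegative quantity and leaves the crucial residual $5\ka(\ka+1)r^\e\int w^2G$.

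There is, however, a genuine gap in your treatment of part~(ii). The claimed orthogonality $\int_{S_r}(\ka u-x\cdot\D u-2t\pa_t u)wG=0$ is \emph{false}. Integrating $\pa_\tau\wu\cdot\ww\,e^{-(\ka+1)\tau}$ by parts in $\tau$ (using $\wu(\cdot,-2\ln r)=\ww$ and $\pa_\tau\ww=0$) gives instead
\[
\int_{S_r}(\ka u-x\cdot\D u-2t\pa_t u)\,w\,G=2(\ka+1)\int_{S_r}(u-w)w\,G,
\]
so that, combined with \eqref{eq:par-(u-w)-transf-2},
\[
\int_{S_r}(\ka u-x\cdot\D u-2t\pa_t u)\,u\,G=(\ka+1)\int_{S_r}(u-w)^2G+2(\ka+1)\int_{S_r}(u-w)w\,G.
\]
The cross term $\int(u-w)wG$ is not sign-definite and cannot be dropped. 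The paper bounds it by Young's inequality with parameter $r^{\e/2}$, producing $(\ka+1)r^{-\e/2}\int(u-w)^2G+(\ka+1)r^{\e/2}\int w^2G$, and then absorbs the $r^{\e/2}\int w^2G$ piece using the $5\ka(\ka+1)r^\e\int w^2G$ residual from Step~3. This is precisely the origin of the exponent $2\ka+3-\e/2$ in \eqref{eq:par-weiss-deriv-2}: if your orthogonality held, you would obtain the stronger exponent $2\ka+3$ without ever needing the $w^2$ contribution, which is too good to be true.
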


Although we work with $W_{\ka,\al,\e,\delta}$ throughout most of this paper, the monotonicity of $W_{\ka,\al,\e,\delta,\rho}$ will be used when we establish the rotation estimate in Lemma~\ref{lem:par-rot-est}.

\begin{proof}
The proof is divided into several steps.

\medskip\noindent\emph{Step 1.} Without loss of generality, we assume $z_0=0$. We write for simplicity $W_{\ka,\rho}=W_{\ka,\al,\e,\de,\rho}$. Let $w$ be the homogeneous replacement as in \eqref{eq:hom-rep}.
Note that we can write
$$
W_{\ka,\rho}(r,u)=\Phi(r)\int_{S_r\sm S_\rho}(-2t)|\D u|^2G-\Psi(r)\int_{S_r\sm S_\rho}\ka u^2G+\Phi(r)\|u\|_{\fso}^2e^{-\frac1{r}}r^{-\de},
$$
where $\Phi(r)=\frac{e^{ar^\al}}{r^{2\ka+2}-\rho^{2\ka+2}}$ and $\Psi(r)=\frac{(1-br^\e)e^{ar^\al}}{r^{2\ka+2}-\rho^{2\ka+2}}$ are as in Lemma~\ref{lem:par-weiss-lem}. Then, by using \eqref{eq:par-alm-min-def-kappa} and \eqref{eq:par-weiss-lem-1}, we deduce 
\begin{align*}
    &\frac d{dr}W_{\ka,\rho}(r,u)\\
    &=\Phi'(r)\int_{S_r\sm S_\rho}(-2t)|\D u|^2G-\Psi'(r)\int_{S_r\sm S_\rho}\ka u^2G+2r\Phi(r)\int_{\R^n\times\{-r^2\}}2r^2|\D u|^2G\\
    &\qquad-2r\Psi(r)\int_{\R^n\times\{-r^2\}}\ka u^2G+\|u\|_{\fso}^2\frac{d}{dr}\left(\Phi(r)e^{-\frac1r}r^{-\de}\right)\\
    &=\frac{\Phi'(r)}{1-r^\al}\int_{S_r\sm S_\rho}((1-r^\al)(-2t)|\D u|^2-\ka u^2)G+\left(\frac{\Phi'(r)}{1-r^\al}-\Psi'(r)\right)\int_{S_r\sm S_\rho}\ka u^2G\\
    &\qquad+2r\Phi(r)\int_{\R^n\times\{-r^2\}}2r^2|\D u|^2G-2r\Psi(r)\int_{\R^n\times\{-r^2\}}\ka u^2G\\
    &\qquad+\|u\|_{\fso}^2\frac{d}{dr}\left(\Phi(r)e^{-\frac1r}r^{-\de}\right)\\
    & \begin{multlined}\ge\frac{\Phi'(r)}{1-r^\al}\bigg(\int_{S_r\sm S_\rho}\big[(1+r^\al)(-2t)|\D w|^2-\ka w^2-2(\ka u-x\cdot \D u-2t\pa_tu)(u-w)\\
    +(\ka+2r^\al)(u-w)^2\big]G+2\|u\|_{\fso}^2e^{-\frac1r}\bigg)\end{multlined}\\
    &\qquad+\left(\frac{\Phi'(r)}{1-r^\al}-\Psi'(r)\right)\int_{S_r\sm S_\rho}\ka u^2G+2r\Phi(r)\int_{\R^n\times\{-r^2\}}2r^2|\D u|^2G\\
    &\qquad-2r\Psi(r)\int_{\R^n\times\{-r^2\}}\ka u^2G+\|u\|_{\fso}^2\frac{d}{dr}\left(\Phi(r)e^{-\frac1r}r^{-\de}\right)\\
    &=I+II+III+IV+V,
\end{align*}
where \begin{align*}
    &I=\frac{\Phi'(r)(1+r^\al)}{1-r^\al}\int_{S_r\sm S_\rho}(-2t)|\D w|^2G+2r\Phi(r)\int_{\R^n\times\{-r^2\}}2r^2|\D u|^2G,\\
    &II=-\frac{\Phi'(r)}{1-r^\al}\int_{S_r\sm S_\rho}\ka w^2G-2r\Psi(r)\int_{\R^n\times\{-r^2\}}\ka u^2G,\\
    &III=\frac{\Phi'(r)}{1-r^\al}\int_{S_r\sm S_\rho}\left[-2(\ka u-x\cdot \D u-2t\pa_tu)(u-w)+(\ka+2r^\al)(u-w)^2\right]G,\\
    & IV= \left(\frac{\Phi'(r)}{1-r^\al}-\Psi'(r)\right)\int_{S_r\sm S_\rho}\ka u^2G,\\
    &V=\frac{2\Phi'(r)\|u\|^2e^{-\frac1r}}{1-r^\al}+\|u\|_{\fso}^2\frac{d}{dr}\left(\Phi(r)e^{-\frac1r}r^{-\de}\right).
\end{align*}

\medskip\noindent \emph{Step 2.} In this step, we estimate the terms $I$-$V$. We begin with $I$ and $II$. By using the homogeneity of $w$, we can directly compute
\begin{align*}
    \int_{S_r\sm S_\rho}(-2t)|\D w|^2G&=\frac{r^{2\ka+2}-\rho^{2\ka+2}}{(\ka+1)r^{2\ka}}\int_{\R^n\times\{-r^2\}}2r^2|\D u|^2G\,dx,\\
    \int_{S_r\sm S_\rho}w^2G&=\frac{r^{2\ka+2}-\rho^{2\ka+2}}{(\ka+1)r^{2\ka}}\int_{\R^n\times\{-r^2\}}u^2G\,dx.
\end{align*}
Combining these equalities with \eqref{eq:par-weiss-lem-3} and \eqref{eq:par-weiss-lem-4}, we obtain
\begin{align*}
    I=\left(\frac{\Phi'(r)(1+r^\al)}{1-r^\al}+2r\Phi(r)\frac{(\ka+1)r^{2\ka}}{r^{2\ka+2}-\rho^{2\ka+2}}\right)\int_{S_r\sm S_\rho}(-2t)|\D w|^2G\ge 0,
\end{align*}
and \begin{align*}
    II&=\left(-\frac{\Phi'(r)}{1-r^\al}-2r\Psi(r)\frac{(\ka+1)r^{2\ka}}{r^{2\ka+2}-\rho^{2\ka+2}}\right)\int_{S_r\sm S_\rho}\ka w^2G\\
    &\ge \frac{\left(2\ka+2-\e/8\right)b\Phi(r)r^{2\ka+1+\e}}{r^{2\ka+2}-\rho^{2\ka+2}}\int_{S_r\sm S_\rho}\ka w^2G.
\end{align*}

Next, we estimate $III$. Note that \begin{align*}
    \Phi'(r)&=(a\al r^\al(r^{2\ka+2}-\rho^{2\ka+2})-(2\ka+2)r^{2\ka+2})\frac{\Phi(r)}{r(r^{2\ka+2}-\rho^{2\ka+2})}\\
    &\le (a\al r^\al-(2\ka+2))\frac{\Phi(r)r^{2\ka+1}}{r^{2\ka+2}-\rho^{2\ka+2}}\\
    &\le -(2\ka+1)\frac{\Phi(r)r^{2\ka+1}}{r^{2\ka+2}-\rho^{2\ka+2}},\quad r<\frac{1}{(16\ka_0)^{1/\al}}.
\end{align*}
This, along with \eqref{eq:par-(u-w)-transf-1}, produces
\begin{align*}
    III
    &=-\frac{\Phi'(r)}{1-r^\al}\bigg(\frac{2\rho^{2\ka+2}}{\pi^{n/2}}\int_{\R^n}(\wu(y,-2\ln \rho)-\wu(y,-2\ln r))^2e^{-|y|^2}\,dy\\
    &\qquad\qquad\qquad+(\ka+2-2r^\al)\int_{S_r\sm S_\rho}(u-w)^2G\bigg)\\
    &\ge \frac{(4\ka+2)\Phi(r)r^{2\ka+1}\rho^{2\ka+2}}{\pi^{n/2}(r^{2\ka+2}-\rho^{2\ka+2})}\int_{\R^n}(\wu(y,-2\ln \rho)-\wu(y,-2\ln r))^2e^{-|y|^2}\,dy\\
    &\qquad+\frac{(2\ka+1)(\ka+1)\Phi(r)r^{2\ka+1}}{r^{2\ka+2}-\rho^{2\ka+2}}\int_{S_r\sm S_\rho}(u-w)^2G.
\end{align*}

Regarding $IV$, we simply use \eqref{eq:par-weiss-lem-2} to get 
\begin{align*}
    IV\ge -\frac{\left(2\ka+2-\e/4\right)b\Phi(r)r^{2\ka+1+\e}}{r^{2\ka+2}-\rho^{2\ka+2}}\int_{S_r\sm S_\rho}\ka u^2G.
\end{align*}

Finally, to deal with $V$, we recall the inequality $r^{2\ka+2}-\rho^{2\ka+2}\ge \frac12r^{2\ka+2}$ to get
\begin{align*}
    \Phi'(r)=\left(a\al r^\al-\frac{(2\ka+2)r^{2\ka+2}}{r^{2\ka+2}-\rho^{2\ka+2}}\right)\frac{\Phi(r)}r\ge \left(a\al r^\al-4(\ka+1)\right)\frac{\Phi(r)}r,
\end{align*}
which yields
\begin{align*}
    \frac{V}{\|u\|_{\fso}^2}&=\frac{2e^{-\frac1r}}{1-r^\al}\Phi'(r)+\Phi'(r)e^{-\frac1r}r^{-\de}+\left(\frac1r-\de\right)\frac{\Phi(r)}re^{-\frac1r}r^{-\de}\\
    &\ge \left(\left(\frac2{1-r^\al}+r^{-\de}\right)\left(a\al r^\al-4(\ka+1)\right)+\left(\frac1r-\de\right)r^{-\de}\right)\frac{\Phi(r)}re^{-\frac1r}\\
    &\ge 0,\quad 0<r<\frac{r_0(\e)}{\ka_0}.
\end{align*}

\medskip\noindent\emph{Step 3.} By combining the results in Step 1 and Step 2, we get 
\begin{align*}
    &\frac{r^{2\ka+2}-\rho^{2\ka+2}}{\Phi(r)r^{2\ka+1}}\frac{d}{dr}W_{\ka,\rho}(r,u)\\
    &\ge \left(2\ka+2-\e/8\right)\ka br^\e\int_{S_r\sm S_\rho}w^2G+(2\ka+1)(\ka+1)\int_{S_r\sm S_\rho}(u-w)^2G\\
    &\qquad-\left(2\ka+2-\e/4\right)\ka br^\e\int_{S_r\sm S_\rho}u^2G\\
    &\qquad+\frac{(4\ka+2)\rho^{2\ka+2}}{\pi^{n/2}}\int_{\R^n}(\wu(y,-2\ln\rho)-\wu(y,-2\ln r))^2e^{-|y|^2}\,dy.
\end{align*}
On the other hand, we take $\mu=\frac{\e}{12\left(2\ka+2-\e/4\right)}$, which is tailor-made to satisfy $1+\mu=\frac{2\ka+2-\e/6}{2\ka+2-\e/4}$, and apply Young's inequality to have that for $0\le\rho<r<\frac{r_0(\al,\e)}{\ka_0^{2/\e}}$, 
\begin{align*}
    &\left(2\ka+2-\e/8\right)\ka br^\e\int_{S_r\sm S_\rho}w^2G+(2\ka+1)(\ka+1)\int_{S_r\sm S_\rho}(u-w)^2G\\
    &\qquad-\left(2\ka+2-\e/4\right)\ka br^\e\int_{S_r\sm S_\rho}u^2G\\
    &\ge \left(2\ka+2-\e/8\right)\ka br^\e\int_{S_r\sm S_\rho}w^2G+(2\ka+1)(\ka+1)\int_{S_r\sm S_\rho}(u-w)^2G\\
    &\qquad-\left(2\ka+2-\e/4\right)\ka br^\e\left((1+\mu)\int_{S_r\sm S_\rho}w^2G+(1+1/\mu)\int_{S_r\sm S_\rho}(u-w)^2G\right)\\
    &=\frac1{24}\e\ka br^\e\int_{S_r\sm S_\rho}w^2G\\
    &\qquad+\left((2\ka+1)(\ka+1)-\left(2\ka+2-\e/4\right)\ka br^\e(1+1/\mu)\right)\int_{S_r\sm S_\rho}(u-w)^2G\\
    &\ge 5\ka(\ka+1)r^\e\int_{S_r\sm S_\rho}w^2G+\ka(\ka+1)\int_{S_r\sm S_\rho}(u-w)^2G.
\end{align*}
By combining the precious two inequalities, we deduce
\begin{multline}
    \label{eq:par-weiss-deriv-est}
    \frac{r^{2\ka+2}-\rho^{2\ka+2}}{\Phi(r)r^{2\ka+1}}\frac{d}{dr}W_{\ka,\rho}(r,u)\\
    \begin{multlined}\ge  5\ka(\ka+1)r^\e\int_{S_r\sm S_\rho}w^2G+\ka(\ka+1)\int_{S_r\sm S_\rho}(u-w)^2G\\
    +\frac{(4\ka+2)\rho^{2\ka+2}}{\pi^{n/2}}\int_{\R^n}(\wu(y,-2\ln\rho)-\wu(y,-2\ln r))^2e^{-|y|^2}\,dy.\end{multlined}
\end{multline}
This gives \eqref{eq:par-weiss-deriv-1}.

\medskip\noindent\emph{Step 4.} The purpose of this step is to obtain \eqref{eq:par-weiss-deriv-2}. To this aim, we let $\rho=0$, and observe that $(\wu-\ww)\wu=0$ on $\R^n\times\{-2\ln r\}$ and that for any $y\in\R^n$
$$
\lim_{\tau\to \infty}\left|(\wu(y,\tau)-\ww(y,\tau))\wu(y,\tau)e^{-(\ka+1)\tau}\right|\le \lim_{\tau\to\infty}\left(2\|u\|^2_{L^\infty(S_1)}e^{-\tau}\right)=0.
$$
It then follows that
\begin{align*}
        &\int_{S_r}(\ka u-x\cdot\D u-2t\pa_tu)uG\\
        &=\frac2{\pi^{n/2}}\int_{\R^n\times(-2\ln r,\infty)}(\pa_\tau(\wu-\ww))\wu e^{-|y|^2}e^{-(\ka+1)\tau}\,dyd\tau\\
        &=-\frac2{\pi^{n/2}}\int_{\R^n\times(-2\ln r,\infty)}(\wu-\ww)(\pa_\tau\wu) e^{-|y|^2}e^{-(\ka+1)\tau}\,dyd\tau\\
        &\qquad+\frac{2(\ka+1)}{\pi^{n/2}}\int_{\R^n\times(-2\ln r,\infty)}(\wu-\ww)\wu e^{-|y|^2}e^{-(\ka+1)\tau}\,dyd\tau\\
        &=-\int_{S_r}(u-w)(\ka u-x\cdot\D u-2t\pa_tu)G\,dxdt+2(\ka+1)\int_{S_r}(u-w)uG\,dxdt\\
        &=(\ka+1)\int_{S_r}(u-w)^2G+2(\ka+1)\int_{S_r}(u-w)wG,
\end{align*}
where we used \eqref{eq:hom-rep-time-deriv} in the first step and \eqref{eq:par-(u-w)-transf-2} in the last equality. Thus \begin{align*}
        \left|\int_{S_r}(\ka u-x\cdot\D u-2t\pa_tu)uG\right|\le 2(\ka+1)r^{-\e/2}\int_{S_r}(u-w)^2G+(\ka+1)r^{\e/2}\int_{S_r}w^2G.
\end{align*}
Therefore, by combining this with \eqref{eq:par-weiss-deriv-est}, we conclude that \begin{align*}
    \frac\ka2r^{\e/2}\left|\int_{S_r}(\ka u-x\cdot\D u-2t\pa_tu)uG\right|&\le \ka(\ka+1)\int_{S_r}(u-w)^2G+\frac{\ka(\ka+1)}2r^\e\int_{S_r}w^2G\\
    &\le \frac{r}{\Phi(r)}\frac{d}{dr}W_\ka(r,u).
\end{align*}
This implies \eqref{eq:par-weiss-deriv-2}.
\end{proof}

Next, we deal with the Almgren-type frequency in the parabolic setting. Poon proved in \cite{Poo96} that if $u$ is a caloric function in $S_1$ (i.e., $\Delta u-\partial_tu=0$ in $S_1$), then its caloric frequency
$$
N(r,u,z_0):=\frac{r^2\int_{\R^n\times\{t_0-r^2\}}|\D u|^2G_{z_0}dx}{\int_{\R^n\times\{t_0-r^2\}}u^2G_{z_0}dx}
$$
is monotone nondecreasing in $r\in(0,1)$. Concerning the parabolic Signorini problem, \cite{DanGarPetTo17} considered its averaged version
$$
N^0(r,u,z_0):=\frac{\int_{S_r(t_0)}2(t_0-t)|\D u|^2G_{z_0}}{\int_{S_r(t_0)}u^2G_{z_0}},
$$
and proved the generalized frequency formula related to $N^0$ when $z_0$ is a free boundary point. For almost minimizers, we need some modifications on $N^0$. Given free boundary point $z_0\in \Gamma(u)\cap Q_{1/2}'$, we let
$$
 N_\de(r,u,z_0):=\frac{\int_{S_r(t_0)}2(t_0-t)|\D u|^2G_{z_0}+\|u\|_{\fsz}^2e^{-\frac1{r}}r^{-\de}}{\int_{S_r(t_0)}u^2G_{z_0}}.
$$
We then define the multiplicative modification of $N_\de$
$$
\N_{\ka_0,\e,\de}(r,u,z_0):=\frac1{1-br^\e}N_\de(r,u,z_0),
$$
where $b$ is as in Theorem~\ref{thm:par-weiss} (or Lemma~\ref{lem:par-weiss-lem}), as well as the truncation of $\tilde N_{\ka_0,\e,\de}$
$$
\widehat{N}_{\ka_0,\e,\de}(r,u,z_0):=\min\{\N_{\ka_0,\e,\de}(r,u,z_0),\ka_0\},\quad 0<r<r_0=r_0(\ka_0,\e)=\frac{r_0(\e)}{\ka_0^{2/\e}}.
$$
When $z_0=0$, we simply write $N^0(r,u)$, $N_\de(r,u)$, etc.

As demonstrated in \cite{JeoPet21}*{Theorem~5.4}, the monotonicity of $W_{\ka,\al,\e,\delta}$ readily implies that of the truncated frequency $\widehat{N}_{\ka_0,\e,\de}$.

\begin{theorem}[Almgren-type monotonicity formula]\label{thm:par-almgren}
Let $u,z_0,\ka_0,\delta,\e$ be as in Theorem~\ref{thm:par-weiss}. Then $\widehat{N}_{\ka_0,\e,\de}(r,u,z_0)$ is nondecreasing in $0<r<r_0=r_0(\ka_0,\e)=\frac{r_0(\e)}{\ka_0^{2/\e}}$.
\end{theorem}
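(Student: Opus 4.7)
The plan is to deduce the monotonicity of $\widehat{N}_{\ka_0,\e,\de}$ directly from the one-parameter family of Weiss-type monotonicity formulas in Theorem~\ref{thm:par-weiss}, following the scheme of \cite{JeoPet21}*{Theorem~5.4}. The crucial observation is algebraic: if we abbreviate
$$
H(r) := \int_{S_r(t_0)} u^2\, G_{z_0}\,dxdt, \qquad D_\de(r) := \int_{S_r(t_0)} 2(t_0-t)|\D u|^2\, G_{z_0}\,dxdt + \|u\|_{\fsz}^2 e^{-1/r} r^{-\de},
$$
then for every $0<\ka<\ka_0$ the Weiss energy factors as
$$
W_{\ka,\al,\e,\de}(r,u,z_0) \;=\; \frac{e^{ar^\al}(1-br^\e)H(r)}{r^{2\ka+2}}\bigl(\tilde N_{\ka_0,\e,\de}(r,u,z_0)-\ka\bigr).
$$
Since $(1-br^\e)>0$ for $r<r_0(\ka_0,\e)$ and $H(r)>0$ (because $D_\de(r)>0$ forces $H(r)>0$ for $u\not\equiv 0$ near $z_0$), the sign of $W_{\ka,\al,\e,\de}(r)$ equals the sign of $\tilde N_{\ka_0,\e,\de}(r) - \ka$.

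Fix $0<r_1<r_2<r_0(\ka_0,\e)$. I would argue by cases. If $\widehat{N}_{\ka_0,\e,\de}(r_2,u,z_0)=\ka_0$, then $\widehat N(r_1)\le\ka_0=\widehat N(r_2)$ is automatic. Otherwise $\tilde N_{\ka_0,\e,\de}(r_2,u,z_0)<\ka_0$; set $\ka := \tilde N_{\ka_0,\e,\de}(r_2,u,z_0)$. Note $\ka>0$ because the numerator $D_\de(r_2)$ contains the strictly positive term $\|u\|_{\fsz}^2 e^{-1/r_2}r_2^{-\de}$, so $\ka\in(0,\ka_0)$ and Theorem~\ref{thm:par-weiss} applies with this choice of $\ka$. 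By construction $W_{\ka,\al,\e,\de}(r_2,u,z_0)=0$, and by monotonicity $W_{\ka,\al,\e,\de}(r_1,u,z_0)\le 0$. From the factorization above this gives $\tilde N_{\ka_0,\e,\de}(r_1,u,z_0)\le\ka=\tilde N_{\ka_0,\e,\de}(r_2,u,z_0)$, so
$$
\widehat N_{\ka_0,\e,\de}(r_1,u,z_0)\le \tilde N_{\ka_0,\e,\de}(r_1,u,z_0)\le \tilde N_{\ka_0,\e,\de}(r_2,u,z_0) = \widehat N_{\ka_0,\e,\de}(r_2,u,z_0),
$$
as desired.

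The only mild subtlety is that the parameter $\ka := \tilde N_{\ka_0,\e,\de}(r_2,u,z_0)$ must lie in the admissible range $(0,\ka_0)$ where Theorem~\ref{thm:par-weiss} is valid, but this is precisely what the truncation at $\ka_0$ and the presence of the strictly positive exponential term in $D_\de$ ensure; the constants $a,b$ in Theorem~\ref{thm:par-weiss} depend on $\ka$ but only through the uniform bound $\ka<\ka_0$, so $r_0=r_0(\ka_0,\e)$ is admissible uniformly in $\ka$. I do not anticipate any serious obstacle: once the factorization of $W_{\ka,\al,\e,\de}$ is written down the argument is essentially a one-line consequence of Theorem~\ref{thm:par-weiss}, and the degenerate scenarios ($H\equiv 0$ or $u\equiv 0$) are trivial since then $\widehat N\equiv 0$ or the problem is vacuous.
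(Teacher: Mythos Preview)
Your proposal is correct and follows essentially the same approach as the paper's proof: both exploit the factorization $W_{\ka,\al,\e,\de}(r)=\frac{e^{ar^\al}(1-br^\e)H(r)}{r^{2\ka+2}}\bigl(\tilde N_{\ka_0,\e,\de}(r)-\ka\bigr)$ and deduce monotonicity of $\widehat N$ from that of $W_\ka$ via Theorem~\ref{thm:par-weiss}. The paper phrases it as a level-set argument (if $\widehat N(r)<\ka$ then $\widehat N(s)<\ka$ for all $s<r$), whereas you pick $\ka=\tilde N(r_2)$ directly; these are logically equivalent reformulations of the same idea.
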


\begin{proof}
We may assume without loss of generality $z_0=0$. Take $r_0=r_0(\ka_0,\e)$ small so that $1-br^\e>0$. If $\widehat{N}_{\ka_0,\e,\de}(r,u)<\ka$ for some $r\in(0,r_0)$ and $\ka\in(0,\ka_0)$, then $$
W_{\ka,\al,\e,\de}(r,u)=\frac{e^{ar^\al}}{r^{2\ka+2}}(1-br^\e)\left(\int_{S_r}u^2G\right)(\N_{\ka_0,\e,\de}(r,u)-\ka)<0.
$$
For any $0<s<r$, we have by Theorem~\ref{thm:par-weiss} that $W_{\ka,\al,\e,\de}(s,u)\le W_{\ka,\al,\e,\de}(r,u)<0$, and hence $\widehat{N}_{\ka_0,\e,\de}(s,u)<\ka$, as desired.
\end{proof}

%%%%%%%%%%%%%%%%%%%%%%%%%%%%%%%%%%%%%%%%%%%%%%%%%%%%%%%%%%%%%%

\section{Almgren rescalings and blowups}
The main objective of this section is to derive the proper lower bound for the frequency for almost minimizers at free boundary points. For this purpose, we deal with so-called Almgren blowups, which become global solutions of the parabolic Signorini problem. It is known that even-symmetric (in $x_n$-variable) solutions possess the minimal frequency of $3/2$.

In the study of the Signorini problem (both in elliptic and parabolic settings), the even symmetry of the solution with respect to the thin space is imperative. The symmetry ensures that the growth rate of the solution over the ``thick'' strip $S_r(z_0)$ match that over the ``thin'' strip $S'_r(z_0)$. This allows us to extract the information about the behavior of solutions on the thin space using the Almgren-type monotonicity formula.

In the case of solutions of the parabolic Signorini problem, the symmetry assumption is not resrictive, because if $u$ is a solution then its even symmetrization
$$
u^*(x',x_n,t)=\frac{u(x',x_n,t)+u(x',-x_n,t)}2
$$
is still a solution. However, this property is not available for almost minimizers, as the even symmetrization can disrupt the almost Signorini property, even in the time-independent case (see \cite{JeoPetSVG24}*{Example~6.1}).

Therefore, in the remainder of this paper, we assume that the almost minimizer $u$ is even symmetric in $x_n$-variable.

\medskip
Next, we introduce another type of competitor for $u\in\fs_{z_0}$ aside from homogeneous replacement. We say that $v$ is a \emph{parabolic Signorini replacement} of $u$ in $S_r(t_0)$ if $v$ is the solution of a parabolic Signorini problem in $S_r(t_0)$ with $v=u$ on $\R^n\times\{t_0-r^2\}$ and $v-u\in L^2(t_0-r^2,t_0;W^{1,2}_0(\R^n,G_{z_0}))$.

We remark that the regularity assumption on $u\in\fs_{z_0}$ is not sufficient to ensure the existence of its parabolic Signorini replacement. To rectify this issue, we consider convolutions with mollifiers. For a standard mollifier $\vp=\vp(x)$ in $\R^n$ and a small constant $\mu>0$, we let $\vp_\mu(x):=(1/\mu)^n\vp(x/\mu)$. We set
\begin{align}
    \label{eq:conv}
    u_\mu(x,t):=u*\vp_\mu(x,t),\quad (x,t)\in S_1.
\end{align}
Then $u_\mu(\cdot,-r^2)\in W^2_\infty(\R^n)$ for a.e. $r\in(0,1)$ and $\|u_\mu-u\|_{\fs_{z_0}}\to0$ as $\mu\to0$. By Theorem~\ref{thm:exist-weak-sol}, for such $r$, there exists a unique parabolic Signorini replacement of $u_\mu$ in $S_r$.

\begin{remark}
    \label{rmk:conv-alm-min}
    $u_\mu$ satisfies the almost parabolic Signorini property in $S_r$, $0<r<1$, with a gauge function $\eta(r)=r^{\al/2}$ and additional additive error $C(n,\al)\|u-u_\mu\|^2_{\fs_{z_0}}$. 

    Indeed, we assume without loss of generality $z_0=0$. Since $v:=u-u_\mu+v_\mu$ is a valid competitor of $u$ in $S_r$, we have by \eqref{eq:par-alm-min-def} and Young's inequality
    \begin{align*}
    &\int_{S_r}(1-r^{\al/2})(-t)|\D u_\mu|^2G+(x\cdot\D u_\mu-2t\pa_tu_\mu)(u_\mu-v_\mu)G\\
    &=\int_{S_r}(1-r^{\al/2})(-t)|\D u+\D(u_\mu-u)|^2G+(-x\cdot\D u-2t\pa_tu)(u-v)G\\
    &\qquad\qquad+(-x\cdot\D(u_\mu-u)-2t\pa_t(u_\mu-u))(u_\mu-v_\mu)G\\
    &\le \int_{S_r}(1-r^\al)(-t)|\D u|^2G+(-x\cdot\D u-2t\pa_tu)(u-v)G\\
    &\qquad+C(r)\|u-u_\mu\|^2_{\fs_0}+r^\al\int_{S_r}(u_\mu-v_\mu)^2G\\
    &\le (1+r^\al)\int_{S_r}(-t)|\D v|^2G+r^\al\int_{S_r}(u-v)^2G+\|u\|_{\fso}^2e^{-\frac1r}\\
    &\qquad+C(r)\|u-u_\mu\|_{\fs_0}^2+r^\al\int_{S_r}(u_\mu-v_\mu)^2G\\
    &\le (1+r^{\al/2})\int_{S_r}(-t)|\D v_\mu|^2G+2r^{\al/2}\int_{S_r}(u_\mu-v_\mu)^2G+\|u_\mu\|_{\fso}^2e^{-\frac1r}\\
    &\qquad+C(r)\|u-u_\mu\|_{\fs_0}^2.
\end{align*}
\end{remark}

\begin{lemma}\label{lem:par-sig-alm-min-diff-est-weight}
Let $u\in \fsz$ satisfy the almost parabolic Signorini property at $z_0\in Q'_{1/2}$. Suppose that $u$ has a parabolic Signorini replacement $v$ in $S_r(t_0)$. Then there exist constants $r_0>0$ and $C>0$, depending only on $\al$, such that if $0<r<r_0$, 
\begin{align}
    \label{eq:par-sig-alm-min-grad-diff-est-weight} \int_{S_r(t_0)}(t_0-t)|\D(u-v)|^2G_{z_0}&\le Cr^\al\int_{S_r(t_0)}(t_0-t)|\D u|^2G_{z_0}+C\|u\|_{\fsz}^2e^{-\frac1r},\\
    \label{eq:par-sig-alm-min-diff-est-weight}\int_{S_r(t_0)}(u-v)^2G_{z_0}&\le Cr^\al\int_{S_r(t_0)}(t_0-t)|\D u|^2G_{z_0}+C\|u\|_{\fsz}^2e^{-\frac1r}.
\end{align}
\end{lemma}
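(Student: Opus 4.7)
The plan is to compare $u$ with the parabolic Signorini replacement $v$ via two variational inequalities and an analog of the integration-by-parts identity from Lemma~\ref{lem:par-(u-w)-transf}. Without loss of generality take $z_0=0$, and abbreviate
$A_u:=\int_{S_r}(-t)|\D u|^2 G$, $A_v:=\int_{S_r}(-t)|\D v|^2 G$, $D:=\int_{S_r}(u-v)^2 G$, $E:=\|u\|_{\fsz}^2 e^{-1/r}$, $B:=\int_{S_r}(-x\cdot\D u-2t\pa_t u)(u-v)G$, and $F:=\int_{S_r}(-x\cdot\D v-2t\pa_t v)(u-v)G$. First I would feed $w=v$ into the weighted almost parabolic Signorini property of $u$ in the form \eqref{eq:par-alm-min-def-kappa} with $\ka=0$ and $\rho=0$ to obtain
\[
(1-r^\al)A_u+B\le(1+r^\al)A_v+r^\al D+E;
\]
conversely, since $u\ge 0$ on the thin space, $u$ is an admissible competitor for $v$ in the Signorini variational inequality \eqref{eq:par-Sig}, which yields $A_v-F\le A_u$.

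Next I would repeat the proof of Lemma~\ref{lem:par-(u-w)-transf} with $(u,v)$ in place of $(u,w)$: in the conformal coordinates \eqref{eq:chan-coor} with $\ka=0$ the matching condition $\wu(\cdot,-2\ln r)=\wv(\cdot,-2\ln r)$ coming from $u=v$ on $\{t=-r^2\}$ eliminates the boundary term and produces the identity
\[
\int_{S_r}\bigl(-x\cdot\D(u-v)-2t\pa_t(u-v)\bigr)(u-v)G=D,
\]
equivalently $B-F=D$. Summing the two inequalities of the previous paragraph and substituting this identity collapses the cross terms and gives the clean bound $(1-r^\al)D\le r^\al(A_u+A_v)+E$. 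To pass from $A_u+A_v$ to $A_u$ on the right-hand side I would use Signorini to write $A_v\le A_u+F=A_u+B-D$ and control the residual $B$. Writing $B$ in conformal coordinates as $(2/\pi^{n/2})\int \pa_\tau\wu\,(\wu-\wv)\,e^{-\tau}e^{-|y|^2}\,dy\,d\tau$ and applying Cauchy--Schwarz bounds $|B|\le M\,D^{1/2}$ with $M^2:=\int_{S_r}(-x\cdot\D u-2t\pa_t u)^2 G$; the latter I would estimate by splitting the integral at $|x|=1$, using the bound $\|\D u\|_{L^\infty(Q_1)}\le C\|u\|_{\fs}$ from Theorem~\ref{thm:par-alm-min-grad-holder} on the inside, and exploiting Gaussian decay (together with the $\fsz$-control) on the outside to absorb the tail into $E$. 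Young's inequality then produces \eqref{eq:par-sig-alm-min-diff-est-weight}.

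For the gradient estimate \eqref{eq:par-sig-alm-min-grad-diff-est-weight} the additional ingredient is the first variation for the Signorini solution $v$: since $u-v\ge 0=v$ on the coincidence set, $\phi=u-v$ is an admissible perturbation direction, and differentiating \eqref{eq:par-Sig} at $v$ yields the orthogonality $2\int_{S_r}(-t)\D v\cdot\D(u-v)G+F\ge 0$. Combining this with the algebraic expansion $A_u-A_v=\int_{S_r}(-t)|\D(u-v)|^2 G+2\int_{S_r}(-t)\D v\cdot\D(u-v)G$ gives $\int_{S_r}(-t)|\D(u-v)|^2 G\le(A_u-A_v)+F$, and the right-hand side is then controlled via the two inequalities of the first paragraph, the identity $F=B-D$, and the same Cauchy--Schwarz treatment of $B$ as above.

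The principal obstacle is the cross term $B$: unlike the elliptic case there is no direct minimization inequality $A_v\le A_u$ in the Gaussian-weighted parabolic setting, and the multiplier $-x\cdot\D-2t\pa_t$ is unbounded in $x$. The conformal change of variables \eqref{eq:chan-coor} is the essential device, converting this multiplier into $2\pa_\tau$ so that Cauchy--Schwarz and Young's inequality in the Gaussian measure, combined with the local Hölder regularity of $\D u$ from Theorem~\ref{thm:par-alm-min-grad-holder} and Gaussian tail decay in $|x|$, close both estimates.
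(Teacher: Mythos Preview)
Your ingredients are exactly right --- the two variational inequalities, the conformal identity $B-F=D$, and the bound $(1-r^\al)D\le r^\al(A_u+A_v)+E$ are precisely what the paper uses --- but the step where you try to reduce $A_v$ to $A_u$ by writing $A_v\le A_u+F=A_u+B-D$ and then controlling $B$ via Cauchy--Schwarz does not close. The resulting term $r^{2\al}M^2$ with $M^2=\int_{S_r}(-x\cdot\D u-2t\pa_tu)^2G$ is at best of order $r^{2}\|u\|_{\fsz}^2$ (even granting pointwise bounds on $\D u$, which the hypotheses of this lemma do not provide: only the \emph{weighted} almost Signorini property at a single point is assumed, so Theorem~\ref{thm:par-alm-min-grad-holder} is unavailable). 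Such a term is neither absorbed by $Cr^\al A_u$ nor by $C\|u\|_{\fsz}^2e^{-1/r}$, so the final inequality for $D$ fails.

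The fix is to reverse the order: prove the gradient estimate first, and you already have everything you need. From your own expansion $\int_{S_r}(-t)|\D(u-v)|^2G\le (A_u-A_v)+F$, substitute the almost-Signorini inequality $A_u-A_v\le r^\al(A_u+A_v)-B+r^\al D+E$ and then use your identity in the form $F-B=-D$ (or the weaker $F-B\le -D$, which is all the paper claims). The cross term $B$ \emph{cancels completely} --- no Cauchy--Schwarz needed --- and you obtain
\[
\int_{S_r}(-t)|\D(u-v)|^2G\le r^\al(A_u+A_v)-(1-r^\al)D+E\le r^\al(A_u+A_v)+E.
\]
Now the triangle inequality $A_v\le 2A_u+2\int_{S_r}(-t)|\D(u-v)|^2G$ bootstraps to $A_v\le CA_u+CE$ for small $r$, which simultaneously finishes \eqref{eq:par-sig-alm-min-grad-diff-est-weight} and, plugged back into your clean bound $(1-r^\al)D\le r^\al(A_u+A_v)+E$, immediately yields \eqref{eq:par-sig-alm-min-diff-est-weight}. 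This is exactly the paper's route.
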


\begin{proof} 
We may assume without loss of generality $z_0=0$. By the variational inequality of $v$, we have $$
\int_{S_r}(-2t)\D v\D(v-u)G+(-x\cdot\D v-2t\pa_tv)(v-u)G\le 0.
$$
This, combined with the almost Signorini property of $u$ (equation \eqref{eq:par-alm-min-def}), gives
\begin{align*}
    &\int_{S_r}(-t)|\D(u-v)|^2G\\
    &=\int_{S_r}(-t)|\D u|^2G-\int_{S_r}(-t)|\D v|^2G+2\int_{S_r}(-t)\D(v-u)\D vG\\
    &\le r^\al\int_{S_r}(-t)(|\D u|^2+|\D v|^2)G+\int_{S_r}(-x\cdot\D u-2t\pa_tu)(v-u)G\\
    &\qquad+r^\al\int_{S_r}(u-v)^2G+\|u\|_{\fso}^2e^{-\frac1r}-\int_{S_r}(-x\cdot\D v-2t\pa_tv)(v-u)G\\
    &=\int_{S_r}(-x\cdot\D(u-v)-2t\pa_t(u-v))(v-u)G+r^\al\int_{S_r}(-t)(|\D u|^2+|\D v|^2)G\\
    &\qquad+r^\al\int_{S_r}(u-v)^2G+\|u\|_{\fso}^2e^{-\frac1r}.
\end{align*}
To compute the first term in the last line, we consider $\wu(y,\tau):=u\left(2e^{-\frac\tau2}y,-e^{-\tau}\right)$ and $\wv(y,\tau):=v\left(2e^{-\frac\tau2}y,-e^{-\tau}\right)$, which correspond to \eqref{eq:chan-coor} with $\ka=0$. Since $\wu-\wv=0$ on $\R^n\times\{-2\ln r\}$, we have by Integration by parts
\begin{align}\begin{split}\label{eq:par-(u-v)-transf}
    &\int_{S_r}(-x\cdot\D(u-v)-2t\pa_t(u-v))(v-u)G\\
    &=\frac2{\pi^{n/2}}\int_{\R^n\times(-2\ln r,\infty)}(\pa_\tau(\wu-\wv))(\wv-\wu)e^{-|y|^2}e^{-\tau}\,dyd\tau\\
    &=-\frac1{\pi^{n/2}}\int_{\R^n\times(-2\ln r,\infty)}\pa_\tau\left((\wu-\wv)^2\right)e^{-|y|^2}e^{-\tau}\,dyd\tau\\
    &\le -\frac1{\pi^{n/2}}\int_{\R^n\times(-2\ln r,\infty)}(\wu-\wv)^2e^{-|y|^2}e^{-\tau}\,dyd\tau\\
    &=-\int_{S_r}(u-v)^2G\,dxdt.
\end{split}\end{align}
It then follows that 
\begin{align}
    \label{eq:par-grad-u-v-diff-est}
    \int_{S_r}(-t)|\D(u-v)|^2\le r^\al\int_{S_r}(-t)(|\D u|^2+|\D v|^2)G+\|u\|_{\fso}^2e^{-\frac1r}.
\end{align}
This gives \begin{align*}
    \int_{S_r}(-t)|\D v|^2G&\le 2\int_{S_r}(-t)|\D u|^2G+2\int_{S_r}(-t)|\D(u-v)|^2G\\
    &\le 4\int_{S_r}(-t)|\D u|^2G+2r^\al\int_{S_r}(-t)|\D v|^2G+2\|u\|_{\fso}^2e^{-\frac1r},
\end{align*}
which implies \begin{align}
    \label{eq:par-grad-u-v-est}
    \int_{S_r}(-t)|\D v|^2G\le C\int_{S_r}(-t)|\D u|^2+C\|u\|_{\fso}^2e^{-\frac1r},\quad r<r(\al).
\end{align}
By combining this with \eqref{eq:par-grad-u-v-diff-est}, we obtain \eqref{eq:par-sig-alm-min-grad-diff-est-weight}.

\medskip Regarding \eqref{eq:par-sig-alm-min-diff-est-weight}, we use the almost parabolic Signorini property of $u$ and the parabolic Signorini property of $v$ (i.e., equations \eqref{eq:par-alm-min-def} and \eqref{eq:par-Sig}) to have 
\begin{align*}
    &\int_{S_r}(-t)|\D u|^2G+\int_{S_r}(-x\cdot\D u-2t\pa_tu)(u-v)G\\
    &\le\int_{S_r}(-t)|\D v|^2G+r^\al\int_{S_r}(u-v)^2G+\|u\|_{\fso}^2e^{-\frac1r}+r^\al\int_{S_r}(-t)(|\D u|^2+|\D v|^2)G\\
    &\le \int_{S_r}(-t)|\D u|^2G-\int_{S_r}(-x\cdot \D v-2t\pa_tv)(v-u)G\\
    &\qquad +r^\al\int_{S_r}(u-v)^2G+\|u\|_{\fso}^2e^{-\frac1r}+r^\al\int_{S_r}(-t)(|\D u|^2+|\D v|^2)G,
\end{align*}
and thus \begin{align*}
    &\int_{S_r}(-x\cdot\D(u-v)-2t\pa_t(u-v))(u-v)G\\
    &\qquad\le r^\al\int_{S_r}(u-v)^2G+\|u\|_{\fso}^2e^{-\frac1r}+r^\al\int_{S_r}(-t)(|\D u|^2+|\D v|^2)G.
\end{align*} 
This, together with \eqref{eq:par-(u-v)-transf}, gives
$$
\int_{S_r}(u-v)^2G\le Cr^\al\int_{S_r}(-t)(|\D u|^2+|\D v|^2)G+C\|u\|_{\fso}^2e^{-\frac1r}.
$$
Finally, by combining this and \eqref{eq:par-grad-u-v-est}, we conclude \eqref{eq:par-sig-alm-min-diff-est-weight}.
\end{proof}

\begin{corollary}
    \label{cor:conv-diff-est}
    Suppose that $u\in \fs_{z_0}$ satisfies the almost Signorini property at $z_0\in Q'_{1/2}$. For $u_\mu$ be as in \eqref{eq:conv}, let $v_\mu$ be the parabolic Signorini replacement of $u_\mu$ in $S_r(t_0)$. Then there are $r_0>0$ and $C>0$, depending only on $\al$, such that for $0<r<r_0$
    \begin{align*}
        &\begin{multlined}\int_{S_r(t_0)}(t_0-t)|\D(u_\mu-v_\mu)|^2G_{z_0}\le Cr^{\al/2}\int_{S_r(t_0)}|\D u_\mu|^2G_{z_0}+C\|u_\mu\|_{\fs_{z_0}}^2e^{-\frac1r}\\+C_0(n,\al)\|u-u_\mu\|^2_{\fs_{z_0}},\end{multlined}\\
        &\begin{multlined}\int_{S_r(t_0)}(u_\mu-v_\mu)^2G_{z_0}\le Cr^{\al/2}\int_{S_r(t_0)}|\D u_\mu|^2G_{z_0}+C\|u_\mu\|_{\fs_{z_0}}^2e^{-\frac1r}\\+C_0(n,\al)\|u-u_\mu\|^2_{\fs_{z_0}}.\end{multlined}
    \end{align*}
\end{corollary}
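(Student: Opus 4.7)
The plan is to run the argument of Lemma \ref{lem:par-sig-alm-min-diff-est-weight} verbatim, with $u$ replaced by $u_\mu$ throughout. The only change of hypothesis is supplied by Remark \ref{rmk:conv-alm-min}: $u_\mu$ satisfies the almost parabolic Signorini property in $S_r(t_0)$ with gauge $\eta(r) = r^{\al/2}$ (in place of $r^\al$) and an extra additive error $C(n,\al)\|u - u_\mu\|^2_{\fs_{z_0}}$ on the right-hand side of \eqref{eq:par-alm-min-def}. Everything else in the setting of the previous lemma is unchanged, since the Signorini replacement $v_\mu$ exists by the discussion preceding the statement.

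Concretely, I would combine the variational inequality satisfied by $v_\mu$ (tested against $u_\mu$) with the modified almost Signorini inequality for $u_\mu$, exactly as at the opening of the proof of Lemma \ref{lem:par-sig-alm-min-diff-est-weight}. Handling the resulting mixed term via the conformal self-similar change of variables as in \eqref{eq:par-(u-v)-transf} --- which only uses $u_\mu - v_\mu = 0$ on the initial slice $\R^n \times \{t_0 - r^2\}$ and the decay at $\tau \to \infty$, both still valid --- produces the analogue of \eqref{eq:par-grad-u-v-diff-est} with $r^\al$ replaced by $r^{\al/2}$ and an extra additive term $C_0(n,\al)\|u - u_\mu\|^2_{\fs_{z_0}}$. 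Absorbing the $r^{\al/2}\int(t_0-t)|\D v_\mu|^2 G_{z_0}$ contribution into the left side (possible for $r$ small) reproduces the analogue of \eqref{eq:par-grad-u-v-est}, and substituting it back, together with the crude bound $(t_0 - t) \le 1$ in $S_r(t_0)$ to convert the weighted gradient norm on the right into the unweighted one as stated, yields the first inequality.

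The second inequality is obtained in the same fashion as the last paragraph of the proof of Lemma \ref{lem:par-sig-alm-min-diff-est-weight}: adding the almost Signorini inequality for $u_\mu$ and the exact Signorini variational inequality for $v_\mu$ produces a bound on the mixed quantity $\int(-(x-x_0)\cdot\D(u_\mu - v_\mu) - 2(t-t_0)\pa_t(u_\mu - v_\mu))(u_\mu - v_\mu)G_{z_0}$, which by the conformal computation \eqref{eq:par-(u-v)-transf} equals $\int(u_\mu - v_\mu)^2 G_{z_0}$; combining with the first estimate closes the argument.

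I do not foresee a substantive obstacle. The entire argument is routine bookkeeping, and the only novelty is that the extra additive error $C_0(n,\al)\|u - u_\mu\|^2_{\fs_{z_0}}$ must be carried through each step without interaction with the other error terms. The threshold $r_0$ must be adjusted slightly downward from Lemma \ref{lem:par-sig-alm-min-diff-est-weight} (to absorb $r^{\al/2}$ rather than $r^\al$ coefficients), but this affects only constants and not the structure of the argument.
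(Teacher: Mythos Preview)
Your proposal is correct and matches the paper's own proof, which simply says to use the almost parabolic Signorini property of $u_\mu$ from Remark~\ref{rmk:conv-alm-min} and follow the argument in Lemma~\ref{lem:par-sig-alm-min-diff-est-weight}. Your more detailed outline is faithful to that argument; the one minor imprecision is that the conformal computation \eqref{eq:par-(u-v)-transf} gives an inequality (the mixed quantity is bounded below by $\int (u_\mu-v_\mu)^2 G_{z_0}$) rather than an equality, but this is exactly what is needed and does not affect the reasoning.
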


\begin{proof}
    For the proof, we can use the almost parabolic Signorini property of $u_\mu$ (Remark~\ref{rmk:conv-alm-min}) and follow the argument in Lemma~\ref{lem:par-sig-alm-min-diff-est-weight}.
\end{proof}

For $z_0=(x_0,t_0)\in\Ga(u) \cap Q'_{1/2}$ and $0<r<1/2$, consider the \emph{Almgren rescaling} of $u$ at $z_0$ $$
u^A_{z_0,r}(x,t):=\frac{u(rx+x_0,r^2t+t_0)}{\left(\frac1{r^2}\int_{S_r(t_0)}u^2G_{z_0}\right)^{1/2}},\quad (x,t)\in Q_{1/(2r)}.
$$
It satisfies the normalization and scaling properties
\begin{align*}
&\int_{S_1}(u_{z_0,r}^A)^2G=1,\\
&N^0(\rho,u^A_{z_0,r},0)=N^0(\rho r,u,z_0),\quad \rho<1/(2r).
\end{align*}
We will call the limits of $u_{z_0,r}^A$ over any sequence $r=r_j\to 0+$ \emph{Almgren blowups} of $u$ at $z_0$, denoted by $u_{z_0,0}^A$. When $z_0=0$, we simply write $u_r^A=u_{0,r}^A$ and $u_0^A=u_{0,0}^A$.

\begin{proposition}[Existence of Almgren blowups]\label{prop:par-exist-Alm-blowup}
Let $z_0\in Q'_{1/2}\cap \Ga(u)$ be such that $\widehat{N}_{\ka_0,\de}(0+,u,z_0)=\ka<\ka_0$ for some $0<\de<2$ and $\ka_0>2$. Then every sequence of Almgren rescalings $u_{z_0,r_j}^A$, with $r_j\to 0+$, contains a subsequence, still denoted by $r_j$, such that for a function $u^A_{x_0,0}\in W^{1,0}_{2,\loc}(S_1,G)\cap C^{1,0}_{\loc}(Q_1^\pm\cup Q_1')$
\begin{align*}
    &u^A_{z_0,r_j}\ra u^A_{z_0,0}\quad\text{in } W^{1,0}_{2,\loc}(S_1,G),\\
    &u^A_{z_0,r_j}\ra u^A_{z_0,0}\quad\text{in } C^{1,0}_{\loc}(Q_1^\pm\cup Q_1').
\end{align*}
Moreover, $u^A_{z_0,0}$ is a nonzero solution of the parabolic Signorini problem in $S_1$, even in $x_n$.
\end{proposition}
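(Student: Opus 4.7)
The strategy is the standard blowup-compactness argument: derive uniform estimates from the frequency bound, extract a subsequential limit, and identify it as a Signorini solution via mollification. Assume $z_0=0$. The hypothesis $\widehat{N}_{\ka_0,\de}(0+,u)=\ka<\ka_0$ together with Theorem~\ref{thm:par-almgren} forces $N_\de(r,u)\le C(\ka_0)$ for all sufficiently small $r$. A direct change of variables in $u^A_r(x,t)=u(rx,r^2t)/d_r$, where $d_r^2=r^{-2}\int_{S_r}u^2G$, yields
$$
\int_{S_\rho}(-t)|\D u^A_r|^2 G\,dxdt \;=\; \frac{1}{r^2 d_r^2}\int_{S_{r\rho}}(-s)|\D u|^2 G\,dyds \;\le\; \tfrac12 N_\de(r\rho,u)\cdot \frac{\int_{S_{r\rho}}u^2G}{\int_{S_r}u^2G},
$$
and together with the normalization $\int_{S_1}(u^A_r)^2 G=1$ this furnishes a uniform bound for $u^A_r$ in $W^{1,0}_2(S_\rho,G)$ for each $\rho\in(0,1)$. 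Extracting a weakly convergent subsequence produces a candidate $u^A_0\in W^{1,0}_{2,\loc}(S_1,G)$.

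To upgrade this to the claimed convergence modes, I would observe that each rescaling $u^A_r$ is itself an unweighted almost minimizer on $Q_{1/(2r)}$ with gauge function $\tilde\eta(\rho)=(r\rho)^\al\le\rho^\al$, and combining the weighted bound with the local $L^2$ bound on $Q_\rho$ (which the weight $G$ makes essentially equivalent for bounded $\rho<1$ away from $t=0$), Theorem~\ref{thm:par-alm-min-grad-holder} supplies uniform $H^{\be,\be/2}$ estimates for $\D u^A_r$ on compact subsets of $Q^\pm_1\cup Q'_1$. Arzelà--Ascoli then produces $C^{1,0}_{\loc}(Q^\pm_1\cup Q'_1)$ convergence along a further subsequence, which combined with the uniform local bounds and the Gaussian tail decay upgrades the weak $W^{1,0}_2$ convergence to strong in $W^{1,0}_{2,\loc}(S_1,G)$.

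To identify $u^A_0$ as a solution of the parabolic Signorini problem in $S_1$, I would use the mollification approach from Remark~\ref{rmk:conv-alm-min}: for $u_\mu=u*\vp_\mu$, let $v_\mu$ denote the parabolic Signorini replacement of $u_\mu$ in $S_r$, which exists since $u_\mu(\cdot,-r^2)$ is smooth. Corollary~\ref{cor:conv-diff-est} controls
$$
\int_{S_r}(-t)|\D(u_\mu-v_\mu)|^2G+\int_{S_r}(u_\mu-v_\mu)^2 G \le Cr^{\al/2}\int_{S_r}|\D u_\mu|^2 G+C\|u_\mu\|_{\fso}^2 e^{-1/r}+C_0\|u-u_\mu\|^2_{\fso}.
$$
Dividing by $r^2 d_r^2=\int_{S_r}u^2G$, which by the frequency bound satisfies $r^2 d_r^2\gtrsim r^{2\ka+2}$, and choosing $\mu=\mu(r)\to 0$ slowly so that $\|u-u_\mu\|^2_{\fso}=o(r^{2\ka+2})$, the rescaled Signorini replacement of $v_\mu$ converges in $W^{1,0}_2(S_1,G)$ to the same limit $u^A_0$ as $u^A_r$; since each rescaled $v_\mu$ is a genuine parabolic Signorini solution in $S_1$ and this class is closed under the relevant convergence, so is $u^A_0$. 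Nonvanishing follows from the strong $L^2(S_1,G)$ convergence and the normalization $\int_{S_1}(u^A_r)^2G=1$, while even symmetry in $x_n$ is inherited from $u$. The main technical obstacle is this last diagonal step: the coupling of $\mu$ to $r$ must be tuned against the polynomial lower bound $r^2 d_r^2\gtrsim r^{2\ka+2}$ (extracted from the Weiss/Almgren monotonicity) so that every error term in Corollary~\ref{cor:conv-diff-est} vanishes faster than the normalization denominator.
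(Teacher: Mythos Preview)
Your overall strategy---uniform bounds from the frequency, $C^{1,0}$ compactness from the almost minimizer regularity, identification of the limit via mollified Signorini replacements---matches the paper's. There are, however, two gaps.

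First, the lower bound $r^2 d_r^2=\int_{S_r}u^2G\gtrsim r^{2\ka+2}$ you invoke is not available at this point: nondegeneracy is only proved much later (Lemma~\ref{lem:par-nondeg-reg-pt}), and only for regular points. Fortunately you do not need it. For each $r_j$ one has $\int_{S_{r_j}}u^2G>0$ (since $0\in\Ga(u)$), so it suffices to pick $\mu_j$ so that $C_0\|u-u_{\mu_j}\|_{\fso}^2\big/\int_{S_{r_j}}u^2G\to 0$; no quantitative rate is required. This is precisely what the paper does (it mollifies the rescaling $u^A_{r_j}$ rather than $u$, which amounts to the same thing after a change of scale in the mollifier).

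The more serious gap is the nonvanishing of $u^A_0$. You write that it ``follows from the strong $L^2(S_1,G)$ convergence and the normalization,'' but you have not established strong $L^2(S_1,G)$ convergence of $u^A_{r_j}$: the $C^{1,0}_{\loc}(Q_1^\pm\cup Q_1')$ compactness only gives convergence inside $Q_1$, and there is no uniform pointwise bound on $u^A_{r_j}$ at spatial infinity (indeed $\|u^A_{r_j}\|_{L^\infty}\to\infty$) to play against the Gaussian tail. The paper closes this by passing through the Signorini replacements and appealing to \cite{DanGarPetTo17}*{Theorem~7.3}: that result both shows that a weak $W^{1,0}_{2,\loc}(S_1,G)$ limit of Signorini solutions is again a Signorini solution (which you assert without justification), and, crucially, that the sequence $(v_{r_j})_{\mu_j}$ is bounded in $W^{2,1}_2(S_1,G)$. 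This second-order compactness gives $(v_{r_j})_{\mu_j}^2G\to (u^A_0)^2G$ strongly in $L^1(S_1)$, and combined with $\int_{S_1}(u^A_{r_j}-(v_{r_j})_{\mu_j})^2G\to 0$ and the normalization one obtains $\int_{S_1}(u^A_0)^2G\ge 1/2$. Without this detour through the replacements' higher regularity, the normalization could in principle leak to spatial infinity in the limit.
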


\begin{proof}
We may assume without loss of generality $z_0=0$. From $\widehat{N}_{\ka_0,\de}(0+,u)=\ka<\ka_0$, we have $N_\de(0+,u)=\widehat{N}_{\ka_0,\de}(0+,u)=\ka$, and thus $N_\de(r_j,u)<\ka_0$ for small $r_j$. For such $r_j$, 
\begin{align}\label{eq:par-Alm-gradient-bound}
\int_{S_1}(-2t)|\D u^A_{r_j}|^2G=N^0(1,u_{r_j}^A)=N^0(r_j,u)\le N_\de(r_j,u)<\ka_0.
\end{align}
Combining this with $\int_{S_1}\left(u_{r_j}^A\right)^2G=1$, we see that $u_{r_j}^A$ is bounded in $W^{1,0}_2(S_1,G)$. Thus, there is a function $u_0^A\in W^{1,0}_{2,\loc}(S_1,G)$ such that up to a subsequence $$
u_{r_j}^A\ra u_0^A\quad\text{weakly in }W^{1,0}_{2,\loc}(S_1,G).
$$
Moreover, it is easy to see that each $u_{r_j}^A$ is an unweighted almost minimizers in $Q_{1/(2r_j)}$ with gauge function $\eta_{r_j}(\rho)=(r_j\rho)^\al\le \rho^\al$.
Thus, for small $\e>0$ and $K\Subset Q_1\cap\{t\le-\e\}$, we have by Theorem~\ref{thm:par-alm-min-grad-holder}
\begin{align*}
    \|u_{r_j}^A\|_{C^{\al,\al/2}(K)}+\|\D u^A_{r_j}\|_{C^{\be,\be/2}(K^\pm\cap K')}&\le C(n,K,\al,\e)\|u_{r_j}^A\|_{W^{1,0}_2(Q_1\cap\{t\le-\e\})}\\
    &\le C(n,K,\al,\e)\|u_{r_j}^A\|_{W^{1,0}_2(S_1,G)}.
\end{align*}
This, along with the boundedness of $\{u_{r_j}^A\}$ in $W^{1,0}_2(S_1,G)$, yields
$$
u^A_{r_j}\ra u_0^A\quad\text{in } C^{1,0}_{\loc}((Q_1^\pm\cup Q_1')\cap \{t\le-\e\}).
$$
Taking $\e\searrow 0$ and using Cantor's diagonal argument, we infer that over a subsequence $r=r_j\searrow 0$
$$
u^A_{r_j}\ra u_0^A\quad\text{in } C^{1,0}_{\loc}(Q_1^\pm\cup Q_1').
$$
Now, for each $r_j$, we take $\mu_j>0$ small so that the convolution $(u^A_{r_j})_{\mu_j}=u^A_{r_j}*\vp_{\mu_j}$ as in \eqref{eq:conv} satisfies $C_0(r_j,\al)\|(u_{r_j}^A)_{\mu_j}-u^A_{r_j}\|_{\fs_0}^2\to0$ as $r_j\to0$, where $C_0(r_j,\al)$ is as in Corollary~\ref{cor:conv-diff-est}. We then apply Theorem~\ref{thm:exist-weak-sol}. By considering $S_{1-\e}$ with small $\e>0$ if necessary, we may assume that there exists the parabolic Signorini replacement $(v_{r_J})_{\mu_j}$ of $(u_{r_j}^A)_{\mu_j}$ in $S_1$. By Corollary~\ref{cor:conv-diff-est}, \eqref{eq:par-Alm-gradient-bound} and the bound $N_\de(r_j,u)<\ka_0$, we have
\begin{align*}
    &\int_{S_1}(-t)|\D(u_{r_j}^A-(v_{r_j})_{\mu_j})|^2G\\
    &\le 2\int_{S_1}(-t)|\D((u_{r_j}^A)_{\mu_j}-(v_{r_j})_{\mu_j})|^2G+2\|(u_{r_j}^A)_{\mu_j}-u_{r_j}^A\|^2_{\fs_0}\\
    &\le Cr_j^{\al/2}\int_{S_1}(-t)|\D(u_{r_j}^A)_{\mu_j}|^2G+C\|(u_{r_j}^A)_{\mu_j}\|_{\fs_0}^2e^{-\frac1{r_j}}\\
    &\qquad+C(\al)C_0(r_j,\al)\|(u_{r_j}^A)_{\mu_j}-u^A_{r_j}\|_{\fs_0}^2\\
    &\le Cr_j^{\al/2}\int_{S_1}(-t)|\D u_{r_j}^A|^2G+C\|u_{r_j}^A\|_{\fs_0}^2e^{-\frac1{r_j}}+C(\al)C_0(r_j,\al)\|(u_{r_j}^A)_{\mu_j}-u^A_{r_j}\|_{\fs_0}^2\\
    &\le C\ka_0 r_j^{\al/2}+CN_\de(r_j,u)e^{-\frac1{r_j}}+C(\al)C_0(r_j,\al)\|(u_{r_j}^A)_{\mu_j}-u^A_{r_j}\|_{\fs_0}^2\\
    &\to 0\quad\text{as }r_j\to0+.
\end{align*}
Similarly, we can obtain
$$
\int_{S_1}(u_{r_j}^A-(v_{r_j})_{\mu_j})^2G\to 0 \quad\text{as }r_j\to0+.
$$
These estimates, combined with the bounded of $u_{r_j}^A$ in $W^{1,0}_2(S_1,G)$, implies that $(v_{r_j})_{\mu_j}$ is also bounded in $W^{1,0}_2(S_1,G)$ and
$$
u_{r_j}^A-(v_{r_j})_{\mu_j}\to 0\quad\text{strongly in } W^{1,0}_{2,\loc}(S_1,G),
$$
and hence
$$
(v_{r_j})_{\mu_j}\to u_0^A \quad\text{weakly in } W^{1,0}_{2,\loc}(S_1,G).
$$
By \cite{DanGarPetTo17}*{Theorem~7.3}, we have that $u_0^A$ is a solution of the parabolic Signorini problem in $S_1$ and that $(v_{r_j})_{\mu_j}$ is bounded in $W^{2,1}_2(S_1,G)$, which gives $(v_{r_j})_{\mu_j}^2G\to \left(u_0^A\right)^2G$ strongly in $L^1(S_1)$. This, combined with the observation
\begin{align*}
    2\int_{S_1}(v_{r_j})_{\mu_j}^2G\ge \int_{S_1}\left(u_{r_j}^A\right)^2G-2\int_{S_1}(u^A_{r_j}-(v_{r_j})_{\mu_j})^2G\to 1\quad\text{as } r_j\to 0+,
\end{align*}
produces $\int_{S_1}\left(u_0^A\right)^2G=\lim_{r_j\ra 0+}\int_{S_1}(v_{r_j})_{\mu_j}^2G\ge 1/2$, and hence $u_0^A\not\equiv 0$ in $S_1$. This completes the proof.
\end{proof}

In the subsequent lemma, we show that the additional exponential term present in $N_\delta$ is insignificant, as previously mentioned. This enables us to employ the results established in \cite{DanGarPetTo17} while proving a lower bound on Almgren's frequency for almost minimizers in Lemma~\ref{lem:par-min-freq}.

\begin{lemma}\label{lem:par-Almgren}
Suppose $u$ satisfies the almost parabolic Signorini property at $z_0\in \Gamma(u)\cap Q'_{1/2}$. If $\widehat{N}_{\ka_0,\de}(0+,u,z_0)=\ka<\ka_0$ for some $0<\de<2$ and $\ka_0>2$, then $$
\lim_{r\to 0}N^0(r,u,z_0)=\lim_{r\to 0}\frac{\int_{S_r(t_0)}2(t_0-t)|\D u|^2G_{z_0}}{\int_{S_r(t_0)}u^2G_{z_0}}=\ka.
$$
\end{lemma}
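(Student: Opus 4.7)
The strategy is to reduce the claim to showing that the exponential error $E(r) := \|u\|^2_{\fsz} e^{-1/r} r^{-\delta}$ in $N_\delta$ is negligible compared to $H(r) := \int_{S_r(t_0)} u^2 G_{z_0}$. From the assumption $\widehat{N}_{\ka_0,\delta}(0+,u,z_0) = \ka < \ka_0$ and Theorem~\ref{thm:par-almgren}, the truncation in $\widehat{N}$ is inactive for small $r$, so $\widetilde{N}_{\ka_0,\epsilon,\delta}(r) \to \ka$, and since $1-br^\epsilon \to 1$ we obtain $N_\delta(r,u,z_0) \to \ka$. As $N_\delta(r) - N^0(r) = E(r)/H(r) \ge 0$, it suffices to show $E(r)/H(r) \to 0$, which will follow from any polynomial lower bound $H(r) \ge c\, r^M$ for small $r$, since $e^{-1/r}$ decays faster than any power of $r$.

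I would obtain the polynomial lower bound by compactness and iteration. The key non-degeneracy claim is that there exists $c_\ast > 0$ such that $H(r/2,u,z_0) \ge c_\ast H(r,u,z_0)$ for all sufficiently small $r$; iterating $k$ times gives $H(r_0 \cdot 2^{-k}) \ge c_\ast^k H(r_0)$, and together with the trivial monotonicity of $r \mapsto H(r)$ this yields $H(r) \ge C\, r^{\log_2(1/c_\ast)}$ for all small $r$. To prove the claim I argue by contradiction. If $H(r_j/2)/H(r_j) \to 0$ along some $r_j \to 0^+$, Proposition~\ref{prop:par-exist-Alm-blowup} allows us to extract a subsequence along which the Almgren rescalings $u^A_{z_0, r_j}$ converge to a non-trivial parabolic Signorini solution $u_0^A$ on $S_1$ with $\int_{S_1}(u_0^A)^2 G \ge 1/2$. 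The exact scaling identity $H(\rho, u^A_{z_0, r}) = H(\rho r, u, z_0)/H(r, u, z_0)$ at $\rho = 1/2$, combined with the limit transfer, forces $H(1/2, u_0^A) = 0$. This contradicts non-triviality: since $N^0(\rho, u_0^A) = \lim_{r_j} N^0(\rho r_j, u) \le \lim_{r_j} N_\delta(\rho r_j, u) = \ka$ for $\rho \in (0,1)$, the Almgren monotonicity formula for parabolic Signorini solutions in \cite{DanGarPetTo17} gives $H(\rho, u_0^A) \ge c(\ka, n)\, \rho^{n+2+4\ka} > 0$.

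The principal technical hurdle is justifying the limit transfer $H(\rho r_j, u)/H(r_j, u) \to H(\rho, u_0^A)$ (and similarly for $N^0(\rho, u^A_{r_j}) \to N^0(\rho, u_0^A)$). The $C^{1,0}_{\loc}(Q_1^\pm \cup Q_1')$-convergence in Proposition~\ref{prop:par-exist-Alm-blowup} controls compact subsets, while the normalization $\int_{S_1}(u^A_{r_j})^2 G = 1$ together with the uniform bound $\int_{S_1} 2(-t)|\nabla u^A_{r_j}|^2 G = N^0(1, u^A_{r_j}) = N^0(r_j, u) \le \ka_0$ provides uniform control of the Gaussian tails in $x$, so that integrals against $G$ over $S_{\rho}$ indeed pass to the limit.
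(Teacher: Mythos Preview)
Your approach is correct in spirit but genuinely different from the paper's. The paper never proves a polynomial lower bound on $H(r)$. Instead it argues by contradiction using the \emph{family} of monotonicity formulas: if $\limsup E(r)/H(r)=a_0>0$, then for $\delta'=\delta+\e/4$ one has $N_{\delta'}(r_j,u)\gtrsim r_j^{-\e/4}\to\infty$, hence $\widehat N_{\ka_1,\delta'}(0+,u)=\ka_1$ for every $\ka_1>\ka_0$; but evaluating $\widetilde N_{\ka_1,\e,\delta'}$ at the specific radius $c_0/\ka_1^{2/\e}$ (within the monotonicity range) gives only $O(\ka_1^{1/2})<\ka_1$, contradicting monotonicity. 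This is a short, purely ``internal'' argument that exploits the freedom in the parameters $(\ka_1,\delta')$ and avoids compactness altogether. Your route via a doubling inequality $H(r/2)\ge c_\ast H(r)$ and Almgren blowups is more geometric and yields the stronger byproduct $H(r)\ge c\,r^M$, but it leans on Proposition~\ref{prop:par-exist-Alm-blowup} and on passing Gaussian-weighted integrals to the limit.

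That last point is where your write-up has a gap. The convergence in Proposition~\ref{prop:par-exist-Alm-blowup} is stated only in $W^{1,0}_{2,\loc}(S_1,G)$ and $C^{1,0}_{\loc}(Q_1^\pm\cup Q_1')$; neither mode controls $\int_{S_{1/2}}(u^A_{r_j})^2G$ near $t=0^-$ or for $|x|\to\infty$, and a uniform $L^2(G)$ bound alone does not give tightness of $(u^A_{r_j})^2G$. Your ``Gaussian tails via normalization'' sentence is therefore not a proof. The fix is available inside the proof of Proposition~\ref{prop:par-exist-Alm-blowup}: there it is shown that $\int_{S_1}(u^A_{r_j}-(v_{r_j})_{\mu_j})^2G\to0$ and, via \cite{DanGarPetTo17}*{Theorem~7.3}, that $(v_{r_j})_{\mu_j}^2G\to (u_0^A)^2G$ in $L^1(S_1)$; combining these yields $(u^A_{r_j})^2G\to (u_0^A)^2G$ in $L^1(S_1)$, which gives your limit $H(1/2,u^A_{r_j})\to H(1/2,u_0^A)$. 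For the non-degeneracy $H(1/2,u_0^A)>0$ you should rely on the monotonicity of $N^0$ for the \emph{solution} $u_0^A$ (so $N^0(\rho,u_0^A)\le N^0(1,u_0^A)<\infty$) rather than on the asserted equality $N^0(\rho,u_0^A)=\lim_j N^0(\rho r_j,u)$, which again presupposes the same limit transfer.
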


\begin{proof}
Without loss of generality, we may assume $z_0=0$. From $\widehat{N}_{\ka_0,\de}(0+,u)=\ka<\ka_0$, we have $N_{\de}(0+,u)=\ka$, thus it is enough to show 
$$
\lim_{r\to 0}\frac{\|u\|_{\fso}^2e^{-\frac1r}r^{-\de}}{\int_{S_r}u^2G}=0.
$$
To this aim, we assume to the contrary that 
$$
\limsup_{r\to 0}\frac{\|u\|_{\fso}^2e^{-\frac1r}r^{-\de}}{\int_{S_r}u^2G}=a_0\in (0,\ka].
$$
Then we have for some sequence $r=r_j\searrow 0$ 
$$
\frac{\|u\|_{\fso}^2e^{-\frac1{r_j}}r_j^{-\de}}{\int_{S_{r_j}}u^2G}>\frac{a_0}2\quad\text{ and }\quad \frac{\int_{S_{r_j}}(-2t)|\D u|^2G}{\int_{S_{r_j}}u^2G}>\ka-2a_0.
$$
Fix $\e\in(0,\al)$ and for $\de'=\de+\e/4$, consider $N_{\de'}(r,u)=\frac{\int_{S_r}(-2t)|\D u|^2G+\|u\|_{\fso}^2e^{-\frac1r}r^{-\de'}}{\int_{S_r}u^2G}$. Then for $r=r_j$, 
\begin{align*}
    N_{\de'}(r_j,u)=\frac{\int_{S_{r_j}}(-2t)|\D u|^2G}{\int_{S_{r_j}}u^2G}+\frac{\|u\|_{\fso}^2e^{-\frac1{r_j}}r_j^{-\de}}{\int_{S_{r_j}}u^2G}r_j^{-\e/4}>\frac{a_0}2r_j^{-\e/4}+\ka-2a_0.
\end{align*}
Thus, we have for any $\ka_1>\ka_0$
\begin{align*}
    \N_{\ka_1,\e,\de'}(r_j,u)\ge N_{\de'}(r_j,u)>\frac{a_0}2r_j^{-\e/4}+\ka-2a_0,
\end{align*}
and hence
\begin{align}
    \label{eq:par-trun-almgren-eq}
    \widehat{N}_{\ka_1,\de'}(0+,u)=\lim_{r_j\to 0}\min\{\N_{\ka_1,\e,\de'}(r_j,u),\ka_1\}=\ka_1.
\end{align}
On the other hand, since $N_{\de}(0+,u)=\ka$, there is $s_0>0$, independent of $\ka_1$, such that $N_{\de}(r,u)<2\ka$ for $r<s_0$. Moreover, by Theorem~\ref{thm:par-almgren}, there is a constant $c_0=c_0(\e)>0$ such that for any $\ka_1>\ka_0$ and $0<\de<2$, $\widehat{N}_{\ka_1,\e,\de}(r,u)$ is nondecreasing in $0<r<\frac{2c_0}{\ka_1^{2/\e}}$. By taking $c_0$ smaller if necessary, we may assume $c_0<s_0$. Note that if $\ka_1$ is sufficiently large, then $1-\frac{128(\ka_1+1)}\e\cdot \frac{c_0(\e)^\e}{\ka_1^2}>1/2$. Moreover, we have for $0<r<\frac{2c_0}{\ka_0^{2/\e}}$
\begin{align*}
    N_{\de'}(r,u)&=\frac{\int_{S_r}(-2t)|\D u|^2G}{\int_{S_r}u^2G}+\frac{\|u\|_{\fso}^2e^{-\frac1{r}}r^{-\de}}{\int_{S_r}u^2G}r^{-\e/4}\le N_\de(r,u)r^{-\e/4}\le 2\ka r^{-\e/4}.
\end{align*}
It then follows that
\begin{align*}
    \N_{\ka_1,\e,\de'}\left(\frac{c_0}{\ka_1^{2/\e}},u\right)&=\frac{1}{1-\frac{128(\ka_1+1)}\e\left(\frac{c_0}{\ka_1^{2/\e}}\right)^\e}N_{\de'}\left(\frac{c_0}{\ka_1^{2/\e}},u\right)\\
    &\le 4\ka\left(\frac{c_0}{\ka_1^{2/\e}}\right)^{-\e/4}\le C(\e,\ka,s_0)\ka_1^{1/2}.
\end{align*}
Therefore, we have for large $\ka_1$
$$
\widehat{N}_{\ka_1,\e,\de'}\left(\frac{c_0}{\ka_1^{2/\e}},u\right)\le C(\e,\ka,s_0)\ka_1^{1/2}<\ka_1.
$$
This, along with \eqref{eq:par-trun-almgren-eq}, contradicts the monotonicity of $\widehat{N}_{\ka_1,\e,\de'}$ in $\left(0,\frac{2c_0}{\ka_1^{2/\e}}\right)$.
\end{proof}

\begin{lemma}\label{lem:par-min-freq}
Let $u$ be an almost minimizer for the parabolic Signorini problem in $Q_1$ and $z_0\in \Gamma(u)\cap Q'_{1/2}$. If $\widehat{N}_{\ka_0,\de}(0+,u,z_0)=\ka<\ka_0$ for some $0<\de<2$ and $\ka_0>2$, then $$
\ka=3/2\quad\text{or}\quad \ka\ge 2.
$$
\end{lemma}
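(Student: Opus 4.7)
The plan is to transfer the gap statement from almost minimizers to their Almgren blowups, which are genuine solutions of the parabolic Signorini problem, and then invoke the known frequency classification from \cite{DanGarPetTo17}.

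First, I would use Lemma~\ref{lem:par-Almgren} to replace $\widehat{N}_{\ka_0,\de}$ by the averaged frequency $N^0$: the hypothesis $\widehat{N}_{\ka_0,\de}(0+,u,z_0)=\ka<\ka_0$ forces
\[
\lim_{r\to 0+} N^0(r,u,z_0) = \ka.
\]
Combined with the scaling identity $N^0(\rho,u^A_{z_0,r},0)=N^0(\rho r,u,z_0)$ from the definition of the Almgren rescaling, this gives
\[
\lim_{r\to 0+} N^0(\rho,u^A_{z_0,r},0) = \ka \qquad \text{for every fixed } \rho \in (0,1).
\]

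Next, by Proposition~\ref{prop:par-exist-Alm-blowup} there is a subsequence $r_j\to 0+$ along which $u^A_{z_0,r_j}\to u^A_{z_0,0}$ in $W^{1,0}_{2,\loc}(S_1,G)\cap C^{1,0}_{\loc}(Q_1^\pm\cup Q_1')$, where $u^A_{z_0,0}$ is a nonzero, even-in-$x_n$ solution of the parabolic Signorini problem in $S_1$. The combination of the strong $W^{1,0}_2$-convergence on compact subsets and $L^2(G)$-convergence (the denominator in $N^0$ is continuous under this convergence since $\int_{S_1}(u^A_{z_0,r_j})^2G=1$ and $u^A_{z_0,r_j}\to u^A_{z_0,0}$ strongly) lets us pass to the limit and obtain
\[
N^0(\rho,u^A_{z_0,0},0) = \ka \qquad \text{for every } \rho\in(0,1).
\]

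The key step is then to conclude that $u^A_{z_0,0}$ is parabolically $\ka$-homogeneous: since $u^A_{z_0,0}$ is an honest solution of the parabolic Signorini problem, the generalized frequency formula in \cite{DanGarPetTo17} applies, and its monotonicity together with the characterization of the equality case forces a parabolically homogeneous solution when the frequency is constant on $(0,1)$. This is the step where the analysis is most delicate, but it does not require any new argument here because $u^A_{z_0,0}$ is a Signorini solution, not merely an almost minimizer, so one can directly invoke the Poon-type rigidity from \cite{DanGarPetTo17}. Finally, the classification of nonzero even-symmetric parabolically $\ka$-homogeneous global solutions of the Signorini problem in \cite{DanGarPetTo17} states that the admissible frequencies are exactly $\ka=3/2$ and $\ka\ge 2$, which gives the desired gap and completes the proof.
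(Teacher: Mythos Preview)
Your approach is essentially the same as the paper's: pass to an Almgren blowup, show its averaged frequency is constantly $\ka$, conclude parabolic $\ka$-homogeneity, and then invoke the classification from \cite{DanGarPetTo17}. The reduction via Lemma~\ref{lem:par-Almgren} and Proposition~\ref{prop:par-exist-Alm-blowup} is exactly what the paper does.

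There is, however, one genuine gap in your final step. You assert that ``the classification of nonzero even-symmetric parabolically $\ka$-homogeneous global solutions of the Signorini problem in \cite{DanGarPetTo17} states that the admissible frequencies are exactly $\ka=3/2$ and $\ka\ge 2$.'' This is not what \cite{DanGarPetTo17} proves without further input: the gap result there (Proposition~8.5) is stated under the hypothesis $\ka>1$, and that lower bound (Proposition~8.1) requires knowing that the origin is a free boundary point for the blowup, i.e.\ that $u^A_{z_0,0}(0)=0$ and $|\reallywidehat{\D u^A_{z_0,0}}(0)|=0$. A priori a nonzero homogeneous Signorini solution could have $\ka\le 1$ (for instance a positive constant with $\ka=0$) if the origin is \emph{not} on its free boundary. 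The paper closes this gap by invoking the complementarity condition, Lemma~\ref{lem:par-compl-cond}: since $z_0\in\Gamma(u)$, one has $u(z_0)=0$ and $|\wDu(z_0)|=0$, and these vanishing conditions pass to the $C^{1,0}_{\loc}$-limit $u^A_{z_0,0}$ at the origin. With that in hand one can run the argument of \cite{DanGarPetTo17}*{Proposition~8.1} to get $\ka>1$, and only then does Proposition~8.5 yield $\ka=3/2$ or $\ka\ge 2$. You should insert this step explicitly.
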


\begin{proof}
Without loss of generality, we may assume $z_0=0$. Let $u_0^A=\lim_{r_j\to 0}u_{r_j}^A$ be an Almgren blowup. Recall that it is a solution of the parabolic Signorini problem in $S_1$. From Lemma~\ref{lem:par-Almgren}, we find that for any $0<\rho<1$ 
$$
N^0(\rho,u_0^A)=\lim_{r_j\to 0}N^0(\rho,u_{r_j}^A)=\lim_{r_j\to 0}N^0(\rho r_j,u)=\ka,
$$
which implies that $u_0^A$ is parabolically homogeneous of degree $\ka$ in $S_1$ (see the proof of \cite{DanGarPetTo17}*{Theorem~7.3}), and by homogeneity, can be extended to $S_\infty$. In addition, by the Complementarity condition (Lemma~\ref{lem:par-compl-cond}), we have $u_0^A(0)=|\reallywidehat{\D u_0^A}(0)|=0$, where $\reallywidehat{\D u_0^A}$ is the even extension of $\D u_0^A$ from $S_1^+$ to $S_1$. Thus we can repeat the proof of \cite{DanGarPetTo17}*{Proposition~8.1} to get $\ka>1$. Then, it follows from \cite{DanGarPetTo17}*{Proposition~8.5} that either $\ka=3/2$ or $\ka\ge 2$.
\end{proof}

\begin{corollary}\label{cor:weiss-nonneg}
Let $u$ be an almost minimizer for the parabolic Signorini problem in $Q_1$ and $z_0\in \Gamma(u)\cap Q'_{1/2}$. Then for any $\ka_0>2$, $\e\in(0,\al]$ and $0<\de<2$, 
$$
W_{3/2,\al,\e,\de}(r,u,z_0)\ge 0\quad\text{for }0<r<r_0,
$$
where $r_0$ is as in Theorem~\ref{thm:par-weiss}.
\end{corollary}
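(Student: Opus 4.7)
The plan is to reduce the corollary to a lower bound on the Almgren-type frequency and then to combine the monotonicity established in Theorem~\ref{thm:par-almgren} with the minimal-frequency gap from Lemma~\ref{lem:par-min-freq}.

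First I would rewrite the Weiss functional in a way that makes the frequency visible. Setting $H(r):=\int_{S_r(t_0)}u^2G_{z_0}\,dxdt$ and using the definition of $N_\de$, a direct computation with $\ka=3/2$ (so $2\ka+2=5$) gives
\[
W_{3/2,\al,\e,\de}(r,u,z_0)=\frac{e^{ar^\al}}{r^{5}}\,H(r)\left[N_\de(r,u,z_0)-\tfrac{3}{2}(1-br^\e)\right].
\]
Thus the desired inequality is equivalent to $N_\de(r,u,z_0)\ge\tfrac{3}{2}(1-br^\e)$ for every $0<r<r_0$; the degenerate case $H(r)=0$ is trivial since then only the nonnegative exponential error survives in the bracket.

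Next I would establish the frequency lower bound $\widehat{N}_{\ka_0,\e,\de}(r,u,z_0)\ge 3/2$ for $0<r<r_0$. By Lemma~\ref{lem:par-min-freq}, whenever $\widehat{N}_{\ka_0,\de}(0+,u,z_0)=\ka<\ka_0$ one has $\ka=3/2$ or $\ka\ge 2$, and in the remaining possibility the limit coincides with the truncation level $\ka_0>2$; in every scenario the limit is at least $3/2$. The monotonicity of $\widehat{N}_{\ka_0,\e,\de}$ in Theorem~\ref{thm:par-almgren} then propagates this lower bound to all $r\in(0,r_0)$.

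Finally, I would unpack the truncation $\widehat{N}_{\ka_0,\e,\de}=\min\{(1-br^\e)^{-1}N_\de,\,\ka_0\}$: if the minimum is achieved at $\ka_0$ then $(1-br^\e)^{-1}N_\de(r,u,z_0)\ge\ka_0>\tfrac32$, and otherwise $(1-br^\e)^{-1}N_\de(r,u,z_0)\ge\tfrac32$ directly. Either way $N_\de(r,u,z_0)\ge\tfrac{3}{2}(1-br^\e)$, which by the first step yields $W_{3/2,\al,\e,\de}(r,u,z_0)\ge 0$ as required. The argument is essentially formal rather than analytic, and the only delicate point is the bookkeeping to ensure that the truncated case $\widehat{N}_{\ka_0,\de}(0+,u,z_0)=\ka_0$, which is not strictly covered by the dichotomy in Lemma~\ref{lem:par-min-freq}, still falls under the frequency bound $\ge 3/2$ thanks to the standing hypothesis $\ka_0>2$.
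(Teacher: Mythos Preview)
Your proposal is correct and follows essentially the same approach as the paper, which simply cites Lemma~\ref{lem:par-min-freq} together with the argument of \cite{JeoPet21}*{Corollary~6.3}; indeed, the factorization you write down is precisely the identity used in the proofs of Theorem~\ref{thm:par-almgren} and \eqref{eq:Weiss-nonneg}, and your handling of the truncation case $\widehat{N}_{\ka_0,\de}(0+,u,z_0)=\ka_0$ is the correct bookkeeping.
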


\begin{proof}
The proof follows by using Lemma~\ref{lem:par-min-freq} and repeating the argument in \cite{JeoPet21}*{Corollary~6.3}.
\end{proof}

%%%%%%%%%%%%%%%%%%%%%%%%%%%%%%%%%%%%%%%%%%%%%%%%%%%%

\section{Growth estimates}
In this section we establish the optimal growth of almost minimizers at free boundary points (Lemma~\ref{lem:par-opt-growth-est}).

Given $\ka\ge3/2$, we define the \emph{$\ka$-homogeneous rescalings} of $u$ at $z_0=(x_0,t_0)\in \Gamma(u)\cap Q'_{1/2}$ by 
$$
u_{z_0,r}(x,t):=u_{z_0,r}^{(\ka)}(x,t)=\frac{u(x_0+rx,t_0+r^2t)}{r^\ka},\quad(x,t)\in S_1.
$$

Note that $\widehat{N}_{\ka_0,\de}(0+,u,z_0)$ and $\N_\de(0+,u,z_0)$ are independent of $\al$ and $\e$.

\begin{lemma}[Weak growth estimates]\label{lem:par-weak-growth-est}
Suppose $u$ satisfies the almost parabolic Signorini property at $z_0\in\Ga(u)\cap Q'_{1/2}$. If $\widehat{N}_{\ka_0,\de}(0+,u,z_0)\ge \ka\ge 1$ for some $\ka\le\ka_0$, $\ka_0>2$ and $0<\de<2$, then for any $0<\we\le\al/2<1$, 
\begin{align}\label{eq:weak-growth-est}\begin{split}
    \int_{S_r(t_0)}u^2G_{z_0}\,dxdt &\le C(\ka_0,\we)\|u\|_{\fsz}^2r^{2\ka+2-\we},\\
    \int_{S_r(t_0)}2(t_0-t)|\D u|^2G_{z_0}\,dxdt &\le C(\ka_0,\we)\|u\|_{\fsz}^2r^{2\ka+2-\we}
\end{split}\end{align}
for $0<r<r_0=r_0(\ka_0,\we)$.
\end{lemma}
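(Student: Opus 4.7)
The plan is to combine the frequency lower bound coming from the hypothesis $\widehat{N}_{\ka_0, \de}(0+, u, z_0) \ge \ka$ with the Weiss-type monotonicity formula applied at a slightly lower exponent $\ka' := \ka - \we/2$. Because the Weiss quantity and the Almgren quantity share exactly the same $\ka(1 - br^\e)$-weighted $\int u^2 G_{z_0}$ term (after unfolding the definition of $\widehat N_{\ka_0,\e,\de}$) and exactly the same additive error $\|u\|_{\fsz}^2 e^{-1/r} r^{-\de}$, subtracting one inequality from the other will cancel the gradient integral and the exponential error, leaving a clean bound on $\int_{S_r(t_0)} u^2 G_{z_0}$ decaying like $r^{2\ka + 2 - \we}$.

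Fix any $\e\in(0,\al]$. The first ingredient is the Almgren lower bound: by the monotonicity of $\widehat{N}_{\ka_0, \e, \de}$ from Theorem~\ref{thm:par-almgren}, together with the $\e$-independence of $\widehat{N}_{\ka_0,\de}(0+,u,z_0)$, one has $\widehat N_{\ka_0,\e,\de}(r,u,z_0) \ge \ka$ for $0 < r < r_0(\ka_0,\e)$; since $\ka \le \ka_0$ this unfolds to $N_\de(r,u,z_0) \ge \ka(1 - br^\e)$, i.e.,
\begin{equation*}
\int_{S_r(t_0)} 2(t_0 - t)|\D u|^2 G_{z_0} + \|u\|_{\fsz}^2 e^{-1/r} r^{-\de} \ge \ka(1 - br^\e) \int_{S_r(t_0)} u^2 G_{z_0}.
\end{equation*}
The second ingredient is the Weiss upper bound: set $\ka' := \ka - \we/2 \in (0, \ka_0)$, which is positive since $\ka \ge 1 > \we$. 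Theorem~\ref{thm:par-weiss} yields $W_{\ka',\al,\e,\de}(r,u,z_0) \le W_{\ka',\al,\e,\de}(r_0,u,z_0)$, and the right-hand side is dominated by $C(\ka_0,\we)\|u\|_{\fsz}^2$ through the definition of the $\fsz$-norm (the gradient and $L^2$ integrals at the fixed scale $r_0$ are absorbed in $\|u\|_{\fsz}^2$, and the exponential factor is bounded by a constant). Expanding, this reads
\begin{equation*}
\int_{S_r(t_0)} 2(t_0-t)|\D u|^2 G_{z_0} - \ka'(1 - br^\e)\int_{S_r(t_0)} u^2 G_{z_0} + \|u\|_{\fsz}^2 e^{-1/r} r^{-\de} \le C(\ka_0,\we)\|u\|_{\fsz}^2 r^{2\ka'+2}.
\end{equation*}

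Substituting the first inequality into the second cancels both the gradient term and the exponential error, leaving
\begin{equation*}
(\ka - \ka')(1 - br^\e) \int_{S_r(t_0)} u^2 G_{z_0} \le C(\ka_0,\we)\|u\|_{\fsz}^2 r^{2\ka+2-\we},
\end{equation*}
using $2\ka' + 2 = 2\ka + 2 - \we$. For $r$ small enough that $1 - br^\e \ge 1/2$, dividing through by $\ka - \ka' = \we/2$ gives the first estimate in \eqref{eq:weak-growth-est}. Feeding this $u^2$-bound back into the Weiss upper bound and discarding the nonnegative exponential error then produces the gradient estimate.

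The only genuine choice in the argument is picking $\ka' = \ka - \we/2$: it must be strictly below $\ka$ so that the gap $\ka - \ka'$ survives the subtraction and isolates $\int u^2 G_{z_0}$, while its distance from $\ka$ dictates the exponent $2\ka' + 2 = 2\ka + 2 - \we$ on the right. The structural point that makes the whole scheme run without additional effort is that the corrective factors $(1 - br^\e)$ and the additive errors $\|u\|_{\fsz}^2 e^{-1/r} r^{-\de}$ appear with identical shape in both the Weiss and the Almgren formulas, so the two inequalities telescope cleanly rather than accumulating errors.
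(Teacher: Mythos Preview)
Your proof is correct and takes a genuinely different route from the paper's. The paper sets $\e = 2\we$, defines $m(r) = r^{-(2\ka+2)}\int_{S_r} u^2 G$, and controls $|m'(r)|$ via the quantitative derivative estimate \eqref{eq:par-weiss-deriv-2} in Theorem~\ref{thm:par-weiss}(ii); integrating $|m'|$ against the nonnegative $\frac{d}{dr}W_{\ka,\al,\e,\de}$ yields $m(r)\le m(r_0)+4r^{-\e/2}W_{\ka,\al,\e,\de}(r_0,u)$, which gives the claimed bound with the $r^{-\we}$ loss coming from $r^{-\e/2}$.

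Your argument is more elementary: you use only the bare monotonicity of $W_{\ka',\al,\e,\de}$ (Theorem~\ref{thm:par-weiss}(i), not the refined lower bound on its derivative) at the shifted exponent $\ka'=\ka-\we/2$, and you observe that the Almgren lower bound and the Weiss upper bound share identical gradient and exponential-error terms, so subtracting them isolates $(\ka-\ka')(1-br^\e)\int u^2 G$. The $\we$-loss now enters through the exponent $2\ka'+2=2\ka+2-\we$ rather than through an $r^{-\e/2}$ factor, and $\e$ can be fixed independently of $\we$ (say $\e=\al$). What the paper's approach buys is that the $m(r)$ machinery and the differential inequality are reused later in Lemma~\ref{lem:par-opt-growth-est} to upgrade to the optimal growth via the decay of $W_{3/2,\al,\e,\de}$; your telescoping trick, while cleaner here, does not directly extend to that step because taking $\ka'=\ka$ collapses the gap.
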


\begin{proof}
Without loss of generality we assume $z_0=0$. Note that for every $\e=2\we\in(0,\al]$, the condition $\widehat{N}_{\ka_0,\de}(0+,u)\ge\ka$ implies that $\widehat{N}_{\ka_0,\e,\de}(r,u)\ge \ka$ for $0<r<r_0(\ka_0,\e)$. Then we also have $\N_{\ka_0,\e,\de}(r,u)\ge \ka$ for such $r$, and thus 
\begin{align}\label{eq:Weiss-nonneg}
W_{\ka,\al,\e,\de}(r,u)=\frac{e^{ar^\al}}{r^{2\ka+2}}\left(\int_{S_r}u^2G\right)(1-br^\e)\left(\N_{\ka_0,\e,\de}(r,u)-\ka\right)\ge 0.
\end{align}
For $u_r=u_{0,r}^{(\ka)}$, we define
\begin{align}\label{eq:m}
m(r):=\int_{S_1}u_r^2G=\frac1{r^{2\ka+2}}\int_{S_r}u^2G.
\end{align}
Using
$$\frac d{dr}u_r(x,t)=-\frac1{r^{\ka+1}}\left(\ka u(rx,r^2t)-(rx)\cdot\D u(rx,r^2t)-2(r^2t)\pa_tu(rx,r^2t)\right),
$$
we can compute
\begin{align*}
    m'(r)&=2\int_{S_1}u_r(x,t)\left(\frac d{dr}u_r(x,t)\right)G(x,t)\,dxdt\\
    &\begin{multlined}=-\frac2{r^{2\ka+1}}\int_{S_1}u(rx,r^2t)\big(\ka u(rx,r^2t)-(rx)\cdot\D u(rx,r^2t)\\
    -2(r^2t)\pa_tu(rx,r^2t)\big)G(x,t)\,dxdt\end{multlined}\\
    &=-\frac2{r^{2\ka+3}}\int_{S_r}u(\ka u-x\cdot\D u-2t\pa_tu)G\,dxdt.
\end{align*}
By applying Theorem~\ref{thm:par-weiss}, we further have
\begin{align*}
    |m'(r)|=\frac2{r^{2\ka+3}}\left|\int_{S_r}u(\ka u-x\cdot\D u-2t\pa_tu)G\right|\le\frac{4r^{-\e/2}}\ka\frac d{dr}W_{\ka,\al,\e,\de}(r,u).
\end{align*}
This, along with \eqref{eq:Weiss-nonneg}, gives that for $0<s<r<r_0$ 
\begin{align*}
    |m(r)-m(s)|&\le\int_s^{r}|m'(\rho)|\,d\rho\le \frac4\ka\int_s^{r}\rho^{-\e/2}\frac d{d\rho}W_{\ka,\al,\e,\de}(\rho,u)\,d\rho\\
    &\le 4s^{-\e/2}\int_s^{r}\frac d{d\rho}W_{\ka,\al,\e,\de}(\rho,u)\,d\rho\le 4s^{-\e/2} W_{\ka,\al,\e,\de}(r,u).
\end{align*}
In particular, we have 
$$
m(r)\le m(r_0)+4r^{-\e/2} W_{\ka,\al,\e,\de}(r_0,u).
$$
This implies the first bound. The second one is then derived by utilizing the first one and the monotonicity $W_{\ka,\al,\e,\de}(r,u)\le W_{\ka,\al,\e,\de}(r_0,u)$.
\end{proof}

In the rest of this section, we remove the extra $\tilde\e>0$ in Lemma~\ref{lem:par-weak-growth-est} and obtain the optimal growth in the case of the least frequency $\ka=3/2$. To this end, we first derive the polynomial decay estimate of the Weiss-type energy $W_{3/2,\al,\e,\de}$. Following the approach in the elliptic counterpart \cite{JeoPet21}, we achieve this decay by utilizing the epiperimetric inequality. However, it is worth noting that employing the epiperimetric inequality in our context  is considerably more complex and technical.

Before we state the parabolic epiperimetric inequality from \cite{Shi20}, we introduce two types of ``standard'' Weiss energy functionals that will be used in this section.
\begin{align*}
    V^0_{3/2}(t,v)&:=\frac1{(-t)^{3/2}}\int_{\R^n}\left( (-2t)|\D v(x,t)|^2-\frac32 v(x,t)^2\right)G(x,t)\,dx,\\
    W_{3/2}^0(r,v)&:=\frac1{r^{5}}\int_{S_r}\left((-2t)|\D v|^2-\frac32 v^2\right)G\,dxdt.
\end{align*}

\begin{theorem}[Epiperimetric Inequality \cite{Shi20}]\label{thm:epi-ineq} Let $v$ be a solution of the parabolic Signorini problem in $S_1$. Then there is a dimensional constant $\xi\in(0,1)$ such that
$$
V^0_{3/2}(t/e,v)\le(1-\xi)V^0_{3/2}(t,v),\quad -1<t<0.
$$
\end{theorem}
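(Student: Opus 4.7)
The plan is to adapt the elliptic epiperimetric inequality for the Signorini problem to the parabolic setting, following the template laid out in the elliptic/thin obstacle literature. The natural starting point is the conformal self-similar change of variables \eqref{eq:chan-coor} with $\kappa=3/2$: setting $\wv(y,\tau):=e^{3\tau/4}v(2e^{-\tau/2}y,-e^{-\tau})$, the time rescaling $t\mapsto t/e$ translates to $\tau\mapsto\tau+1$, and a direct computation shows that $V^0_{3/2}(-e^{-\tau},v)$ equals a dimensional multiple of
$$
\mathcal{E}(\wv(\cdot,\tau)):=\int_{\R^n}\left(|\D\wv|^2-\tfrac{3}{2}\wv^2\right)e^{-|y|^2}\,dy.
$$
In these coordinates, the parabolic Signorini problem becomes a weighted obstacle problem on $\R^n$, and the theorem reduces to showing $\mathcal{E}(\wv(\cdot,\tau+1))\le(1-\xi)\mathcal{E}(\wv(\cdot,\tau))$.

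First, I would derive a monotonicity identity of the form $\tfrac{d}{d\tau}\mathcal{E}(\wv(\cdot,\tau))=-c_n\int_{\R^n}(\pa_\tau\wv)^2e^{-|y|^2}\,dy$ by expressing the $v$-derivatives in the $(y,\tau)$ variables, using the Signorini PDE $\Delta v-\pa_tv=0$ off the thin space together with the complementarity condition $v\,\pa_{x_n}^+v=0$ on $\{x_n=0\}$ (Lemma~\ref{lem:par-compl-cond}) to kill the would-be boundary terms, and integrating by parts against the Gaussian weight. This shows $\mathcal{E}$ is nonincreasing and that it vanishes precisely on $3/2$-parabolically homogeneous solutions, which are classified as $c\cdot\operatorname{Re}(x'\cdot e+i|x_n|)^{3/2}$ with $e\in S^{n-2}$.

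Second, I would run a compactness-contradiction argument. Assuming the claim fails, take a sequence of solutions $v_j$ and times $\tau_j$ with $\mathcal{E}(\wv_j(\cdot,\tau_j+1))>(1-1/j)\mathcal{E}(\wv_j(\cdot,\tau_j))$. After normalizing $\mathcal{E}(\wv_j(\cdot,\tau_j))=1$ and translating $\tau_j\to 0$, extract a weak limit $\wv_\infty$ in the Gaussian-weighted Sobolev space on $\R^n\times[0,1]$. The monotonicity identity forces $\pa_\tau\wv_\infty\equiv 0$, so $\wv_\infty$ is $3/2$-homogeneous; lower semicontinuity of $\mathcal{E}$ together with $\mathcal{E}(\wv_\infty(\cdot,0))>0$ then contradicts the classification of the null space. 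Alternatively, one can decompose $\wv$ in an Hermite-type eigenbasis compatible with the Signorini constraint and exploit the spectral gap between the eigenvalue $3/2$ and the next admissible frequency $\ge 2$ to read off an explicit $\xi$.

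The main obstacle is controlling the obstacle constraint throughout: one must ensure that the Signorini condition $\wv\ge 0$ on $\{y_n=0\}$ and its complementary relation survive both the conformal change of variables and the passage to the weak limit, and that the relevant second eigenvalue of the Ornstein--Uhlenbeck-type operator, restricted to the cone of Signorini-admissible traces, is strictly separated from $3/2$. Absent the obstacle this is classical; the presence of the thin obstacle is exactly what demands the careful Hermite/compactness analysis in \cite{Shi20}.
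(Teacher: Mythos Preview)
The paper does not prove this theorem; it is quoted from \cite{Shi20} and used as a black box in the proof of Lemma~\ref{lem:par-weiss-bound}. So there is no proof in the paper to compare your proposal against.

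That said, your sketch has a genuine gap in the compactness--contradiction step. After normalizing $\mathcal{E}(\wv_j(\cdot,0))=1$ and extracting a weak limit $\wv_\infty$, you assert that ``lower semicontinuity of $\mathcal{E}$ together with $\mathcal{E}(\wv_\infty(\cdot,0))>0$'' yields a contradiction. But weak lower semicontinuity gives only $\mathcal{E}(\wv_\infty)\le\liminf\mathcal{E}(\wv_j)=1$, not positivity; nothing in your argument rules out $\mathcal{E}(\wv_\infty)=0$ (for instance $\wv_\infty\equiv 0$), which is exactly what you expect if the limit is $3/2$-homogeneous. To pass the normalization to the limit you would need \emph{strong} convergence of $\D\wv_j$ in the Gaussian-weighted $L^2$ space, and the vanishing of the dissipation $\int(\pa_\tau\wv_j)^2e^{-|y|^2}\to 0$ does not deliver this by itself. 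There is also a prior issue: the normalization $\mathcal{E}=1$ does not bound the Gaussian $H^1$ norm, since $\mathcal{E}$ is indefinite; one would need a Poincar\'e-type lower bound, and for Signorini solutions this is tied to the frequency inequality $N^0\ge 3/2$, which is not available uniformly along an arbitrary contradicting sequence.

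Your alternative Hermite-eigenbasis route is closer to what is actually done, but you underplay the main difficulty: the Signorini constraint $\wv\ge 0$ on $\{y_n=0\}$ defines a convex \emph{cone}, not a linear subspace, so one cannot simply project onto eigenspaces of the Ornstein--Uhlenbeck operator and read off a spectral gap. Handling this nonlinearity is precisely the substance of the epiperimetric inequalities in \cite{GarPetSVG16} and \cite{Shi20}, where the argument is constructive (building an explicit competitor that beats the homogeneous extension) rather than by soft compactness.
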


\begin{lemma}\label{lem:par-weiss-bound}
    Fix $\ka_0>2$ and $0<\de<2$. Suppose $u$ satisfies the almost parabolic Signorini property at $z_0\in\Ga(u)\cap Q'_{1/2}$. Then there exists $\si=\si(n,\al)>0$ such that for any $\e\in(0,\al]$, 
    \begin{align}\label{eq:weiss-bound}
0\le W_{3/2,\al,\e,\de}(r,u,z_0)\le Cr^{\min\{\si,3\e/4\}},\quad 0<r<r_0(\ka_0,n,\al,\e)
\end{align}
with $C=C(\ka_0,n,\al,\e)\|u\|_{\fsz}^2$.
\end{lemma}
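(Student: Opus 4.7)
The lower bound $W_{3/2,\al,\e,\de}(r,u,z_0)\ge 0$ is immediate from Corollary~\ref{cor:weiss-nonneg}, so the content of the lemma is the upper bound. The plan is to produce a discrete decay inequality for $W(r):=W_{3/2,\al,\e,\de}(r,u,z_0)$ by combining the almost-minimization property with the epiperimetric inequality for the parabolic Signorini replacement, and then to iterate along a geometric sequence of scales.

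Assume without loss of generality $z_0=0$. For each small $r$ I pass to a mollification $u_\mu$ as in \eqref{eq:conv}, choosing $\mu=\mu(r)$ so small that $C_0(r,\al)\|u-u_\mu\|_{\fso}^2\le\|u\|_{\fso}^2 e^{-1/r}$, and let $v_\mu$ be the parabolic Signorini replacement of $u_\mu$ on $S_r$. Since $\widehat N_{\ka_0,\de}(0+,u,0)=3/2$, Lemma~\ref{lem:par-weak-growth-est} yields $\int_{S_r}(-t)|\D u|^2 G\le C\|u\|_{\fso}^2 r^{5-\we}$ for any fixed small $\we>0$; combined with Corollary~\ref{cor:conv-diff-est} this gives
$$
\int_{S_r}\bigl[(-t)|\D(u-v_\mu)|^2+(u-v_\mu)^2\bigr]G\,dxdt \le C\|u\|_{\fso}^2\bigl(r^{5+\al/2-\we}+e^{-1/r}\bigr),
$$
so $v_\mu$ is close to $u$ at the scale relevant for the $\ka=3/2$ Weiss energy.

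Next I apply the epiperimetric inequality (Theorem~\ref{thm:epi-ineq}) slice-wise to $v_\mu$, obtaining $V_{3/2}^0(t/e,v_\mu)\le(1-\xi)V_{3/2}^0(t,v_\mu)$ for $t\in(-r^2,0)$. Integrating against $(-t)^{3/2}\,dt$ and changing variables $t\mapsto t/e$ converts this into the decay
$$
W_{3/2}^0(r/\sqrt e,\,v_\mu)\;\le\;(1-\xi)\,W_{3/2}^0(r,v_\mu)
$$
for the standard parabolic Weiss energy. I then transfer this from $v_\mu$ to $u$ via Young's inequality: the cross-terms in $W_{3/2}^0(\cdot,u)-W_{3/2}^0(\cdot,v_\mu)$ involve $u-v_\mu$ multiplied by $u$ or $\D u$, and by the closeness estimate above together with Lemma~\ref{lem:par-weak-growth-est} they are $O(r^{\al/4-\we})\|u\|_{\fso}^2$ once divided by $r^5$. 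The difference between $W_{3/2,\al,\e,\de}(\cdot,u)$ and $W_{3/2}^0(\cdot,u)$ comes from the factor $e^{ar^\al}$, the $br^\e u^2$-correction and the additive $\|u\|_{\fso}^2e^{-1/r}r^{-\de}$; applying Lemma~\ref{lem:par-weak-growth-est} to $\int_{S_r} u^2 G$ shows each of these contributes at most $O(r^{3\e/4})\|u\|_{\fso}^2$. Combining everything gives a discrete decay
$$
W(r/\sqrt e)\;\le\;(1-\xi/2)\,W(r)+C\|u\|_{\fso}^2\bigl(r^{\si_0}+r^{3\e/4}\bigr),\quad \si_0=\si_0(n,\al)>0,
$$
valid for all $r\in(0,r_0)$.

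Iterating this inequality along $r_k=r_0 e^{-k/2}$, and using the monotonicity of $W$ from Theorem~\ref{thm:par-weiss} to interpolate between the discrete scales, yields $W(r)\le C\|u\|_{\fso}^2 r^{\min\{\si,3\e/4\}}$ with $\si=\si(n,\al)>0$ essentially determined by $\min\{\si_0,\log(1-\xi/2)/\log(1/\sqrt e)\}$. The main obstacle is the bookkeeping in the discrete decay step: the almost-minimization gauge $r^\al$, the exponential error $\|u\|_{\fso}^2 e^{-1/r}$, the mollification error from passing through $u_\mu$, and the $br^\e$-correction in the definition of $W$ must all be balanced at every scale $r$ so that, after transfer from $v_\mu$ to $u$, they remain strictly smaller than $\tfrac{\xi}{2}W(r)$ down to the target decay level. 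The appearance of $3\e/4$ rather than $\e$ reflects the loss from invoking Lemma~\ref{lem:par-weak-growth-est} with the auxiliary small exponent $\we$ and then absorbing it.
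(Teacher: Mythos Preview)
Your approach is correct and gives a genuinely different route to the decay estimate. The paper proceeds by deriving a \emph{differential} inequality for $W^0_{3/2}(r,u)$: it first sums the slicewise epiperimetric inequality to obtain $W^0_{3/2}(r,v)\le\frac{1-\eta}{5/2}V^0_{3/2}(-r^2,u)$ (using $v=u$ at $t=-r^2$), then invokes the identity $V^0_{3/2}(-r^2,u)=\frac52 W^0_{3/2}(r,u)+\frac r2\frac{d}{dr}W^0_{3/2}(r,u)$, and finally applies the almost-minimizing inequality \eqref{eq:par-alm-min-def-kappa} directly to compare $W^0_{3/2}(r,u)$ with $W^0_{3/2}(r,v)$. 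The resulting extra term $\frac{2}{r^6}\int_{S_r}(3/2\,u-x\cdot\D u-2t\pa_tu)(u-v)G$ then requires a rather delicate estimation (the paper's Step~2). Your argument avoids both the differential identity and that Step~2: you use the epiperimetric inequality to get the discrete contraction $W^0_{3/2}(r/\sqrt e,v_\mu)\le(1-\xi)W^0_{3/2}(r,v_\mu)$, and you transfer this to $u$ at both scales purely via Cauchy--Schwarz combined with Corollary~\ref{cor:conv-diff-est} and Lemma~\ref{lem:par-weak-growth-est}. This is more elementary and somewhat more robust; the price is that the comparison loses a factor $r^{\al/4-\we}$ rather than $r^{\al/2}$, yielding a slightly smaller admissible $\sigma$. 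One small correction: you write ``Since $\widehat N_{\ka_0,\de}(0+,u,0)=3/2$,'' but the lemma is stated for arbitrary $z_0\in\Gamma(u)$; what you actually need (and what Lemma~\ref{lem:par-min-freq} guarantees) is only $\widehat N_{\ka_0,\de}(0+,u,z_0)\ge 3/2$, which is exactly the hypothesis of Lemma~\ref{lem:par-weak-growth-est} with $\ka=3/2$.
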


\begin{proof}
We split our proof into several steps.

\medskip\noindent\emph{Step 1.} We may assume $z_0=0$. We suppose that for $r\in(0,1)$ a.e., there exists a parabolic Signorini replacement $v$ of $u$ in $S_r$. For the simplicity of the presentation, we will derive \eqref{eq:weiss-bound} under this assumption in Step 1-Step 3 and consider the general case without the existence assumption in Step 4. 

By the epiperimetric inequality (Theorem~\ref{thm:epi-ineq}) and the monotonicity of $V^0_{3/2}$ (\cite{Shi20}*{Lemma~2.2}), we get \begin{align*}
    r^{5}W_{3/2}^0(r,v)&=\int_{-r^2}^0(-t)^{3/2} V_{3/2}^0(t,v)\,dt=\sum_{m=0}^\infty\int_{-\frac{r^2}{e^m}}^{-\frac{r^2}{e^{m+1}}}(-t)^{3/2} V_{3/2}^0(t,v)\,dt\\
    &\le \sum_{m=0}^\infty(1-\xi)^m\int_{-\frac{r^2}{e^m}}^{-\frac{r^2}{e^{m+1}}}(-t)^{3/2} V_{3/2}^0(e^mt,v)\,dt\\
    &=\sum_{m=0}^\infty\left(\frac{1-\xi}e\right)^m\int_{-r^2}^{-\frac{r^2}e}\left(-\frac s{e^m}\right)^{3/2} V_{3/2}^0(s,v)\,ds\\
    &\le \sum_{m=0}^\infty\left(\frac{1-\xi}{e^{5/2}}\right)^mV_{3/2}^0(-r^2,v)\int_{-r^2}^{-\frac{r^2}e}(-s)^{3/2}\,ds\\
    &=\frac{r^{5}(e^{5/2}-1)}{(5/2)(e^{5/2}+\xi-1)}V_{3/2}^0(-r^2,v).
\end{align*}
Thus \begin{align}\label{eq:par-epip-1}
    W_{3/2}^0(r,v)\le \frac{1-\eta}{5/2}V_{3/2}^0(-r^2,v)=\frac{1-\eta}{5/2}V_{3/2}^0(-r^2,u),
\end{align}
where $\eta:=\frac{\xi}{e^{5/2}+\xi-1}\in(0,1/e)$. On the other hand, by differentiating 
$$
 r^{5}W_{3/2}^0(r,u)=\int_{-r^2}^0(-t)^{3/2} V_{3/2}^0(t,u)\,dt
$$
with respect to $r$, we obtain after simplification \begin{align}\label{eq:par-weiss-diff-eq-W-V}
V_{3/2}^0(-r^2,u)=(5/2)W_{3/2}^0(r,u)+\frac r2\frac d{dr}W_{3/2}^0(r,u).
\end{align}
This, along with \eqref{eq:par-epip-1}, gives
\begin{align}\label{eq:par-weiss-min-alm-min}
W_{3/2}^0(r,v)\le(1-\eta)W_{3/2}^0(r,u)+\left(\frac{1-\eta}5\right)r\frac d{dr}W_{3/2}^0(r,u).
\end{align}
For $\we=\we(\al)\in(0,\al)$ to be determined later, by applying \eqref{eq:par-alm-min-def-kappa} with $\ka=3/2$ and Lemma~\ref{lem:par-weak-growth-est}, we have
\begin{align}\label{eq:par-weiss-alm-min-prop}\begin{split}
      W_{3/2}^0(r,u)&=\frac{r^\al}{r^{5}}\int_{S_r}(-2t)|\D u|^2+\frac1{r^{5}}\int_{S_r}\left((1-r^\al)(-2t)|\D u|^2-\frac32 u^2\right)G      \\
      &\le C(\ka_0,\al)\|u\|_{\fso}^2r^{\al-\we}+
      (1+r^{\al})W_{3/2}^0(r,v)\\
      &\qquad\begin{multlined}+\frac1{r^{5}}\int_{S_r}\big[ 3/2r^{\al} v^2+(3/2+2r^\al)(u-v)^2 \\
      -2\left(3/2u-x\cdot\D u-2t\pa_tu\right)(u-v)\big] G+\frac{2\|u\|_{\fso}^2}{r^{5}}e^{-\frac1r}.\end{multlined}
      \end{split}\end{align}
Combining this and \eqref{eq:par-weiss-min-alm-min}, we obtain \begin{align*}
    W_{3/2}^0(r,u)&\le (1+r^{\al})(1-\eta)W_{3/2}^0(r,u)+(1+r^{\al})\left(\frac{1-\eta}5\right)r\frac d{dr}W_{3/2}^0(r,u)\\
    &\qquad\begin{multlined}+\frac 1{r^{5}}\int_{S_r}\big[3/2r^{\al} v^2+(3/2+2r^\al)(u-v)^2\\
    -2(3/2 u-x\cdot\D u-2t\pa_tu)(u-v)\big]G+C(\ka_0,\al,u)r^{\al-\we},
\end{multlined}\end{align*}
which is equivalent to
\begin{align}\label{eq:par-weiss-diff-ineq-1}\begin{split}
    \frac d{dr}W_{3/2}^0(r,u)&\ge \frac{5(\eta-r^{\al}(1-\eta))}{(1+r^{\al})(1-\eta)} \frac{W_{3/2}^0(r,u)}r\\
    &\begin{multlined}[t]\qquad+\frac{5}{(1+r^{\al})(1-\eta)r^{6}}\int_{S_r}\big[ -3/2r^{\al} v^2-(3/2+2r^\al)(u-v)^2\\
   +2(3/2 u-x\cdot\D u-2t\pa_tu)(u-v)\big]G
\end{multlined}\\
    & \qquad-C(\ka_0,\al,u)r^{\al-\we-1}.\end{split}\end{align}

%Note that $$
%\frac{\eta_\ka(\ka+1)}{(1-\eta_\ka)}\approx C_0\frac{\ka+1}{e^{\ka+1}}\ge C_0\frac{\ka_0+1}{e^{\ka_0+1}}.
%$$

\medskip\noindent\emph{Step 2.} In this step, we simplify \eqref{eq:par-weiss-diff-ineq-1} by estimating the second term in its right-hand side. To this aim, we decompose
\begin{align*}
    \begin{split}&\frac1{r^{6}}\int_{S_r}\big[-3/2r^{\al} v^2-(3/2+2r^\al)(u-v)^2+2(3/2 u-x\cdot\D u-2t\pa_tu)(u-v)\big]G\\
    &\qquad=I+II+III.
\end{split}\end{align*}
Concerning $II$, we use Lemma~\ref{lem:par-sig-alm-min-diff-est-weight} and Lemma~\ref{lem:par-weak-growth-est} to get
\begin{align}\label{eq:par-weiss-bound-add-term-est-2}\begin{split}
II&= -\frac{3/2+2r^\al}{r^6}\int_{S_r}(u-v)^2G\ge -\frac{C}{r^6}\left(r^\al\int_{S_r}(-t)|\D u|^2+\|u\|_{\fso}^2e^{-\frac1r}\right)\\
&\ge -C(\ka_0,\al,u)r^{\al-\we-1}.
\end{split}\end{align}
Regarding $I$, from \eqref{eq:par-weiss-bound-add-term-est-2} and Lemma~\ref{lem:par-weak-growth-est} we infer
\begin{align}\label{eq:par-weiss-bound-add-term-est-1}\begin{split}
I&\ge-\frac{3r^\al}{r^6}\left(\int_{S_r}u^2G+\int_{S_r}(u-v)^2G\right)\ge -C(\ka_0,\al,u)r^{\al-\we-1}.
\end{split}\end{align}
It remains to consider $III$. Following the argument in the proof of Lemma~\ref{lem:par-(u-w)-transf}, we have for $\ka=3/2$ 
\begin{align*}
    &\int_{S_r}(\ka(u-v)-x\cdot\D(u-v)-2t\pa_t(u-v))(u-v)G\,dxdt\\
    &\qquad=\frac1{\pi^{n/2}}\int_{\R^n\times(-2\ln r,\infty)}\pa_\tau((\wu-\wv)^2)e^{-|y|^2}e^{-(\ka+1)\tau}\,dyd\tau\\
    &\qquad\ge \frac{\ka+1}{\pi^{n/2}}\int_{\R^n\times(-2\ln r,\infty)}(\wu-\wv)^2e^{-|y|^2}e^{-(\ka+1)\tau}\,dyd\tau\\
    &\qquad\ge 0.
\end{align*}
This, together with Young's inequality and \eqref{eq:par-weiss-bound-add-term-est-2}, yields
\begin{align}\begin{split}\label{eq:par-weiss-bound-add-term-est-3}
    III &=\frac2{r^{6}}\int_{S_r}(3/2(u-v)-x\cdot\D(u-v)-2t\pa_t(u-v))(u-v)G\\
    &\qquad\qquad\qquad\qquad+\frac2{r^{6}}\int_{S_r}(3/2 v-x\cdot\D v-2t\pa_tv)(u-v)G\\
    &\ge-\frac1{r^{6-\we}}\int_{S_r}(3/2 v-x\cdot\D v-2t\pa_tv)^2G-\frac1{r^{6+\we}}\int_{S_r}(u-v)^2G.
\end{split}\end{align}
The second term in the last line is estimated in \eqref{eq:par-weiss-bound-add-term-est-2}. To estimate the first one, we bring the following computation made in the proof of \cite{DanGarPetTo17}*{Theorem~13.1}
$$
\frac d{dr}W_{3/2}^0(r,v)\ge \frac2{r^{6}}\int_{S_r}(3/2 v-x\cdot\D v-2t\pa_tv)^2G.
$$
It then follows that
\begin{align*}
    &-\frac1{r^{6-\we}}\int_{S_r}(3/2 v-x\cdot\D v-2t\pa_tv)^2G\\
    &\ge -\frac{r^{\we}}2\frac d{dr}W^0_{3/2}(r,v)\\
    &=-r^{\we-1}V^0_{3/2}(-r^2,v)+5/2r^{\we-1}W^0_{3/2}(r,v)\\
    &\ge -r^{\we-1}V^0_{3/2}(-r^2,u)+\frac5{2(1+r^\al)}r^{\we-1}(W^0_{3/2}(r,u)-Cr^{\al-\we}+r(I+II+III))\\
    &\begin{multlined}\ge -Cr^{\al-\tilde\e-1}+O(r^{\we})\left(\frac{W_{3/2}^0(r,u)}r\right)+O(r^{\we})\left(\frac{d}{dr}W_{3/2}^0(r,u)\right)\\
    -r^{\we}\left(\frac1{r^{6-\we}}\int_{S_r}(3/2 v-x\cdot\D v-2t\pa_tv)^2G\right),\end{multlined}
\end{align*}
where we applied \eqref{eq:par-weiss-diff-eq-W-V} for $v$ in the third line, and used \eqref{eq:par-weiss-alm-min-prop} in the fourth line and \eqref{eq:par-weiss-diff-eq-W-V}, \eqref{eq:par-weiss-bound-add-term-est-2}, \eqref{eq:par-weiss-bound-add-term-est-1}, \eqref{eq:par-weiss-bound-add-term-est-3} in the last step. This implies
\begin{multline*}
    -\frac1{r^{6-\we}}\int_{S_r}(3/2 v-x\cdot\D v-2t\pa_tv)^2G\\
    \ge -Cr^{\al-\tilde\e-1}+O(r^{\we})\left(\frac{W_{3/2}^0(r,u)}r\right)+O(r^{\we})\left(\frac{d}{dr}W_{3/2}^0(r,u)\right),
\end{multline*}
which, combined with \eqref{eq:par-weiss-bound-add-term-est-3}, gives
\begin{align*}
III&\ge-C(\ka_0,\al,u)r^{\al-2\we-1}+O(r^{\we})\left(\frac{W_{3/2}^0(r,u)}r\right)+O(r^{\we})\left(\frac{d}{dr}W_{3/2}^0(r,u)\right).
\end{align*}
Now, by taking $\we=\al/3$, we conclude
\begin{multline*}
    I+II+III\\
    \ge-C(\ka_0,\al,u)r^{\al/3-1}+O(r^{\al/3})\left(\frac{W_{3/2}^0(r,u)}r\right)
    +O(r^{\al/3})\left(\frac d{dr}W_{3/2}^0(r,u)\right).
\end{multline*}
Therefore, \eqref{eq:par-weiss-diff-ineq-1} can be simplified to
\begin{multline}
    \label{eq:par-weiss-diff-ineq-2}
        \frac d{dr}W_{3/2}^0(r,u)\ge \left(\frac{5\eta}{1-\eta}+O(r^{\al/3})\right)\frac{W_{3/2}^0(r,u)}r-C(\ka_0,\al,u)r^{\al/3-1}.
\end{multline}

\medskip\noindent\emph{Step 3.} We consider the Weiss-type energy $W_{3/2,\al}=W_{3/2,\al,\al,1}$ with $\e=\al$ and $\de=1$. By Corollary~\ref{cor:weiss-nonneg} and Lemma~\ref{lem:par-weak-growth-est}, 
\begin{align}
    \label{eq:par-weiss-lower-bound}\begin{split}
        W_{3/2}^0(r,u)&=e^{-ar^\al}W_{3/2,\al}(r,u)-\frac{3/2 br^{\al}}{r^{5}}\int_{S_r}u^2G-\frac{\|u\|_{\fso}^2}{r^{5}}e^{-\frac1{r}}r^{-1}\\
        &\ge-C_0(\ka_0,\al,u)r^{\al/2}, \quad 0<r<r_0(\ka_0,\al).
    \end{split}
\end{align}
We recall $\eta\in(0,1/e)$ and use \eqref{eq:par-weiss-diff-ineq-2} and \eqref{eq:par-weiss-lower-bound} to get the differential inequality for $W^0_{3/2}(r,u)$: \begin{align*}
    \frac{d}{dr}W_{3/2}^0(r,u)&\ge\left(\frac{5\eta}{1-\eta}+O(r^{\al/3})\right)\left(\frac{W_{3/2}^0(r,u)+C_0r^{\al/3}}r-\frac{C_0r^{\al/3}}r\right)-Cr^{\al/3-1}\\
    &\ge 5\eta\left(\frac{W_{3/2}^0(r,u)+C_0r^{\al/3}}r\right)-Cr^{\al/3-1}\\
    &\ge 5\eta\frac{W_{3/2}^0(r,u)}r-C_1r^{\al/3-1}.
\end{align*}
We take $\si=\si(n,\al)$ such that $0<\si<\min\{5\eta,\al/3\}$, and use the differential inequality and \eqref{eq:par-weiss-lower-bound} to obtain \begin{align*}
    &\frac d{dr}\left[W_{3/2}^0(r,u)r^{-\si}+\frac{2C_1}{\al/3-\si}r^{\al/3-\si}\right]\\
    &\qquad=r^{-\si}\left(\frac d{dr}W_{3/2}^0(r,u)-\frac\si{r}W_{3/2}^0(r,u)\right)+2C_1r^{\al/3-\si-1}\\
    &\qquad\ge r^{-\si-1}(5\eta-\si)W_{3/2}^0(r,u)+C_1r^{\al/3-\si-1}\\
    &\qquad\ge -C_2(\ka_0,\al)\|u\|_{\fso}^2r^{\al/2-\si-1}+C_3(\ka_0,\al)\|u\|_{\fso}^2r^{\al/3-\si-1}\\
    &\qquad\ge 0,\quad 0<r<r_0(\ka_0,n,\al).
\end{align*}
This readily gives
$$
W_{3/2}^0(r,u)\le C(\ka_0,n,\al,u)r^\si.
$$
To complete the proof, let $\e\in(0,\al]$ and $\de\in(0,2)$ be given. Then, by applying Lemma~\ref{lem:par-weak-growth-est} (with $\we=\e/4$), we conclude that  
\begin{align*}
    W_{3/2,\al,\e,\de}(r,u)&=e^{ar^\al}W_{3/2}^0(r,u)+\frac{3/2 e^{ar^\al}br^\e}{r^{5}}\int_{S_r}u^2G+\frac{\|u\|_{\fso}^2e^{ar^\al}}{r^{5}}e^{-\frac1{r}}r^{-\de}\\
    &\le C(\ka_0,n,\al,\e)\|u\|_{\fso}^2r^{\min\{\si,3\e/4\}},\quad 0<r<r_0(\ka_0,n,\al,\e).
\end{align*}

\medskip\noindent\emph{Step 4}. To close the argument, we need to remove the assumption on the existence of the parabolic Signorini replacement made in Step 1. To this end, we consider $u_\mu=u*\vp_\mu$ as in \eqref{eq:conv}. Then, for $r\in(0,1)$ a.e., the parabolic Signorini replacement $v_\mu$ of $u_\mu$ in $S_r$ exists. We observe that only the following properties of $u$ are used in Step 1 and Step 2: the almost parabolic Signorini property (equation \eqref{eq:par-alm-min-def}), Lemma~\ref{lem:par-sig-alm-min-diff-est-weight} and the weak growth estimates with $\ka=3/2$ (Lemma~\ref{lem:par-weak-growth-est}). We have already seen in Remark~\ref{rmk:conv-alm-min} and Corollary~\ref{cor:conv-diff-est} that $u_\mu$ satisfies analogues of the first two properties. Moreover, by using the triangle inequality, it is easily seen that $u_\mu$ satisfies the following analogue of \eqref{eq:weak-growth-est} with $\ka=3/2$: for any $0<\tilde\e\le\al/2$
\begin{align*}
    \int_{S_r}u_\mu^2G&\le C(\ka_0,\al)\|u_\mu\|_{\fs_0}^2r^{5-\tilde\e}+C(r,\al)\|u-u_\mu\|_{\fs_0}^2,\\
    \int_{S_r}(-2t)|\D u_\mu|^2G&\le C(\ka_0,\al)\|u_\mu\|_{\fs_0}^2r^{5-\tilde\e}+C(r,\al)\|u-u_\mu\|_{\fs_0}^2.
\end{align*}

Now, with these properties of $u_\mu$ at hand, we can follow the argument in Step 1 and Step 2 with $u_{\mu}$ and $v_{\mu}$ in the place of $u$ and $v$ to deduce an analogue of \eqref{eq:par-weiss-diff-ineq-2}:
\begin{align*}
        \frac d{dr}W_{3/2}^0(r,u_\mu)&\ge \left(\frac{5\eta}{1-\eta}+O(r^{\al/6})\right)\frac{W_{3/2}^0(r,u_\mu)}r-C(\ka_0,\al)\|u_\mu\|_{\fso}^2r^{\al/6-1}\\
        &\qquad-C(r,\al,\ka_0)\|u-u_\mu\|^2_{\fs_0}.
\end{align*}
Taking $\mu\to 0$, we obtain the differential inequality \eqref{eq:par-weiss-diff-ineq-2} concerning $W^0_{3/2}(r,u)$ for $r\in(0,1)$ a.e., but with $\al/6$ in the place of $\al/3$. Then, \eqref{eq:weiss-bound} readily follows by arguing as in Step 3 with obvious modifications.
\end{proof}

As in \cite{JeoPet21}, by using the polynomial decay of $W_{3/2,\al,\e,\de}$ we can improve Lemma~\ref{lem:par-weak-growth-est} when $\ka=3/2$ and derive the optimal growth.

\begin{lemma}[Optimal growth estimate]\label{lem:par-opt-growth-est}
Fix $\ka_0>2$. Suppose that $u\in \fsz$ satisfies the almost parabolic Signorini property at $z_0\in\Ga(u)\cap Q'_{1/2}$. Then, 
\begin{align*}
    \int_{S_r(t_0)}u^2G_{z_0}\,dxdt &\le C(\ka_0,n,\al)\|u\|_{\fsz}^2r^{5},\\
    \int_{S_r(t_0)}2(t_0-t)|\D u|^2G_{z_0}\,dxdt &\le C(\ka_0,n,\al)\|u\|_{\fsz}^2r^{5}
\end{align*}
for $0<r<r_0=r_0(\ka_0,n,\al)$.
\end{lemma}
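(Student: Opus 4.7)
The strategy is to repeat the computation of Lemma~\ref{lem:par-weak-growth-est} at the critical value $\ka=3/2$, but to replace the crude bound $W_{3/2,\al,\e,\de}(r,u,z_0)\le W_{3/2,\al,\e,\de}(r_0,u,z_0)$ used there with the quantitative polynomial decay $W_{3/2,\al,\e,\de}(r,u,z_0)\le Cr^{\min\{\si,3\e/4\}}$ established in Lemma~\ref{lem:par-weiss-bound}. I would assume $z_0=0$ and set $m(r):=r^{-5}\int_{S_r}u^2G$; the pointwise differential bound already derived in the proof of Lemma~\ref{lem:par-weak-growth-est} gives
$$
|m'(\rho)|\le \tfrac{8}{3}\,\rho^{-\e/2}\,\frac{d}{d\rho}W_{3/2,\al,\e,\de}(\rho,u),
$$
where the right-hand side is nonnegative by Theorem~\ref{thm:par-weiss}.

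Next, I would fix $\e=\e(n,\al)\in(0,\al]$ small enough that $3\e/4\le\si$, so that Lemma~\ref{lem:par-weiss-bound} yields $W(\rho):=W_{3/2,\al,\e,\de}(\rho,u)\le C\rho^{3\e/4}$ with $C=C(\ka_0,n,\al)\|u\|_{\fso}^2$. Integrating the pointwise estimate on $[r,r_0]$ and applying integration by parts,
$$
\int_r^{r_0}\rho^{-\e/2}W'(\rho)\,d\rho=r_0^{-\e/2}W(r_0)-r^{-\e/2}W(r)+\tfrac{\e}{2}\int_r^{r_0}\rho^{-\e/2-1}W(\rho)\,d\rho.
$$
Since $W(r)\ge 0$ by Corollary~\ref{cor:weiss-nonneg}, the boundary term at $r$ is favorable, and the polynomial decay bounds the remaining integrand by a multiple of $\rho^{\e/4-1}$, which is integrable at the origin. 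This yields $|m(r_0)-m(r)|\le C(\ka_0,n,\al)\|u\|_{\fso}^2$ uniformly in $0<r<r_0$, hence $m(r)\le C(\ka_0,n,\al)\|u\|_{\fso}^2$, which is the first claim after multiplication by $r^5$.

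For the gradient bound, I would combine the monotonicity of $W_{3/2,\al,\e,\de}(\cdot,u)$ from Theorem~\ref{thm:par-weiss} with the polynomial decay to obtain $W(r)\le C(\ka_0,n,\al)\|u\|_{\fso}^2$. Unpacking the definition of $W$ and substituting the $L^2$ bound established above then gives
$$
\int_{S_r}2(-t)|\D u|^2 G\le \tfrac{3}{2}(1-br^\e)\int_{S_r}u^2 G+e^{-ar^\al}r^5 W(r)\le C(\ka_0,n,\al)\|u\|_{\fso}^2\, r^5.
$$

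The main obstacle is the boundary contribution $r^{-\e/2}W(r)$ that appears in the integration by parts as $r\to 0$: using only the trivial bound $W(r)\le W(r_0)$ forces exactly the $\tilde\e$-loss observed in Lemma~\ref{lem:par-weak-growth-est}. The quantitative decay $W(r)\le Cr^{3\e/4}$, together with the strict inequality $3\e/4>\e/2$, is precisely what kills this boundary term and makes the remainder integral convergent down to $r=0$.
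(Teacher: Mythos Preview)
Your proposal is correct and follows essentially the same approach as the paper: both start from the differential bound $|m'(\rho)|\le C\rho^{-\e/2}\frac{d}{d\rho}W_{3/2,\al,\e,\de}(\rho,u)$ obtained in Lemma~\ref{lem:par-weak-growth-est}, then exploit the polynomial decay $W(\rho)\le C\rho^{3\e/4}$ from Lemma~\ref{lem:par-weiss-bound} (with $\e$ chosen so that $3\e/4\le\si$) to make the resulting integral convergent at $\rho=0$, and finally read off the gradient estimate by unpacking the definition of $W$. The only technical difference is in how you pass from the pointwise bound on $m'$ to a uniform bound on $m$: the paper first integrates crudely to get $|m(r)-m(s)|\le Cs^{-\e/2}r^{3\e/4}$ and then runs a dyadic telescoping sum over scales $r/2^j$, whereas you integrate $\rho^{-\e/2}W'(\rho)$ by parts and use $W\ge 0$ (Corollary~\ref{cor:weiss-nonneg}) to discard the boundary term at $r$. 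Your route is slightly more direct, since it avoids the dyadic step altogether; the paper's route, on the other hand, produces the explicit H\"older-type estimate $|m(r)-m(s)|\le Cr^{\e/4}$, which is also used elsewhere (e.g.\ in the nondegeneracy argument of Lemma~\ref{lem:par-nondeg-reg-pt}).
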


\begin{proof}
We may assume $z_0=0$. Take $\e=\e(n,\al)>0$ small so that $3\e/4<\si$ for $\si=\si(n,\al)$ as in Lemma~\ref{lem:par-weiss-bound}. Following the computation in the proof of Lemma~\ref{lem:par-weak-growth-est} with $\de=1$, we see that for any $0<s<r<r_0(\ka_0,n,\al)$, $$
|m(r)-m(s)|\le 4s^{-\e/2}W_{3/2,\al,\e,1}(r).
$$
By Lemma~\ref{lem:par-weiss-bound}, we further have
\begin{align*}
|m(r)-m(s)|\le Cs^{-\e/2}r^{3\e/4},
\end{align*}
with $C=C(\ka_0,n,\al)\|u\|_{\fso}^2$. Then, by a dyadic argument, we can obtain that 
\begin{align}\label{eq:m-est}
|m(r)-m(s)|\le Cr^{\e/4}.
\end{align}
Indeed, let $k=0,1,2,\cdots,$ be such that $r/2^{k+1}<s\le r/2^k$. Then, \begin{align*}
    |m(r)-m(s)|&\le\sum_{j=1}^k|m(r/2^{j-1})-m(r/2^{j})|+|m(r/2^k)-m(s)|\\
    &\le C\sum_{j=1}^{k+1}(r/2^j)^{-\e/2}(r/2^{j-1})^{3\e/4}=C\left(r^{1/4}2^{3/4}\right)^\e\sum_{j=1}^{k+1}2^{-j/4}\\
    &\le Cr^{\e/4}.
\end{align*}
In particular, we have $$
m(r)\le m(r_0)+Cr_0^{\e/4}\le C(\ka_0,n,\al)\|u\|_{\fso}^2,\quad 0<r<r_0(\ka_0,n,\al).
$$
This implies the first bound. The second bound follows from the first one and the monotonicity $W_{3/2,\al,\e,1}(r,u)\le W_{3/2,\al,\e,1}(r_0,u)$.
\end{proof}

%%%%%%%%%%%%%%%%%%%%%%%%%%%%%%%%%%%%%%%%%%%%%%%%%%%

\section{$3/2$-Homogeneous blowups}\label{sec:par-32-homog-blow}
In this section, we consider the so-called $3/2$-homogeneous blowups of almost minimizers at free boundary points. They are the limits of $3/2$-homogeneous rescalings, which are well-defined thanks to the optimal growth estimates. We achieve their uniqueness through controlling the ``rotation'' of the rescalings.

Concerning the $\ka$-homogeneous rescalings, for the rest of this paper, we focus exclusively on the case $\ka=3/2$. Thus we simply write $u_{z_0,r}=u_{z_0,r}^{(3/2)}$.

Fix $z_0=(x_0,t_0)\in \Gamma(u)\cap Q'_{1/2}$ and $R>1$, and let $r_0=r_0(\ka_0,n,\al)$ be as stated in Lemma~\ref{lem:par-opt-growth-est}. We have for $0<r<r_0$
\begin{align*}
    \int_{S_R}(-2t)|\D u_{z_0,r}|^2G&=\frac1{r^5}\int_{S_{Rr}(t_0)}2(t_0-t)|\D u|^2G_{z_0}\le C(\ka_0,n,\al)\|u\|_{\fsz}^2R^5,\\
    \int_{S_R}u_{z_0,r}^2G&=\frac1{r^5}\int_{S_{Rr}(t_0)}u^2G_{z_0}\le C(\ka_0,n,\al)\|u\|_{\fsz}^2R^5.
\end{align*}
Thus, for a sequence $r=r_j\to 0+$, $u_{z_0,r_j}\to u_{z_0,0}$ weakly in $W^{1,0}_{2,\loc}(S_R,G)$. Moreover, $u_{z_0,r}$ is an unweighted almost minimizer with a gauge function $\eta_r(\rho)=(r\rho)^\al\le \rho^\al$. Given $\e>0$ and $K\Subset Q_R\cap\{t\le-\e\}$, we infer from Theorem~\ref{thm:par-alm-min-grad-holder} that there is a constant $C>0$ such that for any $0<r<r_0$, 
\begin{align*}
    \|u_{z_0,r}\|_{C^{\al,\al/2}(K)}+\|\D u_{z_0,r}\|_{C^{\be,\be/2}(K^\pm\cup K')}&\le C\|u_{z_0,r}\|_{W^{1,0}_2(Q_R\cap\{t\le-\e\})}\\
    &\le C\|u_{z_0,r}\|_{W^{1,0}_2(S_R,G)},
\end{align*}
and hence over a sequence $r=r_j\to 0+$ \begin{align*}
    u_{z_0,r_j}\to u_{z_0,0}\quad\text{in }C^{1,0}_{\loc}((Q_R^\pm\cup Q'_R)\cap\{t\le-\e\}).
\end{align*}
Now, taking $\e\to 0$ and $R\to \infty$ and using Cantor's diagonal argument, we can find a subsequence $r=r_j\to 0+$ such that for some $u_{z_0,0}\in C^{1,0}_{\loc}(S^\pm_\infty\cup S'_\infty)$ $$
u_{z_0,r_j}\to u_{z_0,0}\quad\text{in }C^{1,0}_{\loc}(S^\pm_\infty\cup S'_{\infty}).
$$
We call such $u_{z_0,0}$ a \emph{$3/2$-homogeneous blowup} of $u$ at $z_0$.

\begin{lemma}[Rotation estimate]\label{lem:par-rot-est}
Suppose that $u$ satisfies the almost parabolic Signorini property at $z_0\in\Ga(u)\cap Q_{1/2}'$. Then there exists $\si=\si(n,\al)>0$ such that for any $0<s<r<r_0=r_0(\ka_0,n,\al)$ and $-1<t<0$,
\begin{align*}
    \int_{\R^n}|u_{z_0,r}(x,t)-u_{z_0,s}(x,t)|G_{z_0}(x,t)\,dx\le C(-t)^{3/4+\si}r^{2\si}
    \end{align*}
with $C=C(\ka_0,n,\al)\|u\|_{\fsz}^2$.
\end{lemma}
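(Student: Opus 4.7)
The plan is to reduce the rotation estimate to a Gaussian $L^2$-bound on the difference of $\wu$ at two ``conformal times'', then establish this $L^2$-bound via a dyadic chain argument built from the Weiss derivative formula \eqref{eq:par-weiss-deriv-1} and the polynomial decay of $W_{3/2,\al,\e,\de}$ from Lemma~\ref{lem:par-weiss-bound}. Without loss of generality take $z_0 = 0$. A direct computation from \eqref{eq:chan-coor} with $\ka = 3/2$ yields
\[
u_r(x,t) \;=\; (-t)^{3/4}\,\wu\!\left(\tfrac{x}{2\sqrt{-t}},\,-2\ln(r\sqrt{-t})\right),
\]
so after Cauchy--Schwarz (using $\int G(\cdot,t)\,dx = 1$) and the change of variables $y = x/(2\sqrt{-t})$,
\[
\int_{\R^n} |u_r - u_s|(x,t)\,G(x,t)\,dx \;\le\; (-t)^{3/4}\pi^{-n/4}\sqrt{A(r',s')},
\]
where $r' = r\sqrt{-t}$, $s' = s\sqrt{-t}$, and $A(r',s') := \int_{\R^n}\bigl(\wu(y,-2\ln r') - \wu(y,-2\ln s')\bigr)^2 e^{-|y|^2}\,dy$. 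It therefore suffices to prove $A(r',s') \le C(r')^{4\si}$ for some $\si = \si(n,\al) > 0$, since $(r')^{2\si} = (-t)^\si r^{2\si}$.

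To establish this bound, I would build a chain $s' = \rho_0 < \rho_1 < \cdots < \rho_N \approx r'$ with consecutive ratios in $[\sqrt 2, 2]$. At each step, apply \eqref{eq:par-weiss-deriv-1} with $\rho = \rho_j$ and integrate $\tau$ over $[\sqrt 2 \rho_j, 2\rho_j]$; since $\tau^4\rho_j^5/(\tau^5-\rho_j^5)^2 \ge c/\rho_j$ on this interval,
\[
\int_{\sqrt 2 \rho_j}^{2\rho_j} A(\tau,\rho_j)\,d\tau \;\le\; C\rho_j\,W_{3/2,\al,\e,\de,\rho_j}(2\rho_j).
\]
The two-parameter Weiss value is controlled via the algebraic identity
\[
(r^5-\rho^5)\,W_{3/2,\al,\e,\de,\rho}(r) \;=\; r^5\,W_{3/2,\al,\e,\de}(r) - \rho^5\,W_{3/2,\al,\e,\de}(\rho)\,e^{a(r^\al-\rho^\al)} + R(r,\rho),
\]
obtained by subtracting the defining expressions, where the remainder $R$ collects an exponentially small term and one of order $(r^\e-\rho^\e)\int_{S_\rho}u^2 G$. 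Inserting the polynomial decay $W_{3/2,\al,\e,\de}(r)\le C\|u\|_{\fso}^2 r^\theta$ (with $\theta = \min\{\si_0, 3\e/4\}$) from Lemma~\ref{lem:par-weiss-bound} for the principal parts and the optimal growth Lemma~\ref{lem:par-opt-growth-est} for the remainder yields $W_{3/2,\al,\e,\de,\rho_j}(2\rho_j) \le C\rho_j^\theta$, and a pigeonhole step then selects $\rho_{j+1}\in[\sqrt 2\rho_j,2\rho_j]$ with $A(\rho_{j+1},\rho_j) \le C\rho_j^\theta$.

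By Minkowski's inequality in $L^2(e^{-|y|^2}dy)$ applied to the telescoping decomposition,
\[
\sqrt{A(r',s')} \;\le\; \sum_{j=0}^{N-1}\sqrt{A(\rho_{j+1},\rho_j)} \;\le\; C\sum_{j=0}^{N-1}\rho_j^{\theta/2} \;\le\; C(r')^{\theta/2},
\]
since $\rho_{j+1}/\rho_j \ge \sqrt 2$ implies $\rho_j \le \rho_N/(\sqrt 2)^{N-j}$, rendering the sum geometric and controlled by its largest term. Setting $\si := \theta/4$ and combining with the reduction step produces the claimed inequality. The principal technical difficulty is the passage from the \emph{integrated} bound (in $\tau$) coming out of \eqref{eq:par-weiss-deriv-1} to a \emph{pointwise} bound at a prescribed scale: pigeonhole only yields $A(\tau^*,\rho_j) \le C\rho_j^\theta$ at some unspecified $\tau^* \in [\sqrt 2 \rho_j, 2\rho_j]$, which forces the chain to be constructed adaptively and complicates matching the endpoint $r'$ (which will require one final pigeonhole step with $\rho = \rho_N$). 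A secondary subtlety is the careful bookkeeping in the identity relating $W_{3/2,\al,\e,\de,\rho}$ to $W_{3/2,\al,\e,\de}$, where the error terms of orders $r^\e\rho^5$ and $e^{-1/\rho}$ must all be absorbed into $\rho^{5+\theta}$ using $\theta \le 3\e/4 < \e$.
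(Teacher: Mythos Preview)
Your overall strategy---integrate the Weiss derivative bound \eqref{eq:par-weiss-deriv-1}, control the two-parameter Weiss values $W_{3/2,\al,\e,\de,\rho}(r)$ through the one-parameter ones via the algebraic identity you wrote down together with Lemma~\ref{lem:par-weiss-bound}, then telescope---is exactly the paper's. The reduction to $A(r',s')\le C(r')^{4\si}$ is correct, and carrying the telescoping in $L^2$ via Minkowski (rather than in $L^1$, as the paper does after an initial Cauchy--Schwarz) is a harmless variant. Your bookkeeping on the remainder terms is also right.

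The gap is precisely the one you flag, and your proposed patch does not close it. Pigeonhole on $\int_{\sqrt2\rho_j}^{2\rho_j}A(\tau,\rho_j)\,d\tau\le C\rho_j^{1+\theta}$ produces an \emph{unspecified} $\rho_{j+1}\in[\sqrt2\rho_j,2\rho_j]$, so the chain is determined by $u$ and cannot be steered to hit the prescribed endpoint $r'$. A ``final pigeonhole step with $\rho=\rho_N$'' again only yields some $\tau^*\in[\sqrt2\rho_N,2\rho_N]$, not $r'$; there is no pointwise control of $A(r',\rho_N)$ available from the integrated bound alone. The paper's resolution is to replace specific chain points by \emph{time averages}. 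In conformal time $R=-2\ln r$, the integrated estimate reads
\[
\int_{R-6}^{R-2}\int_{\R^n}|\wu(y,R)-\wu(y,r)|\,e^{-|y|^2}\,dy\,dr\;\le\;C_0e^{-\si R},
\]
and since $[R',R'+1]\subset[(R'+\eta)-6,(R'+\eta)-2]$ for every $3<\eta<6$, one may compare $\wu(\cdot,R'+\eta)$ at a \emph{prescribed} point with the freely chosen reference $\int_{R'}^{R'+1}\wu(\cdot,\tau)\,d\tau$. Telescoping through these averages by induction is then straightforward, and the endpoint matching is handled by writing $S=R-12+k\eta$ with $k\in\mathbb N$ and $\eta\in(3,5)$. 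Swap pigeonhole for averaging and your argument closes.
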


\begin{proof}
Without loss of generality, we may assume $z_0=0$. We fix $\e=\al$, $\de=1$ and $\ka=3/2$, and simply write $W_{\ka,\rho}=W_{\ka,\al,\e,\de,\rho}$. By using \eqref{eq:par-weiss-deriv-1} in Theorem~\ref{thm:par-weiss}, we infer that for $R>R_0(\ka_0,\al)$
\begin{multline*}
 W_{\ka,e^{-R/2}}(e^{3-R/2},u)-W_{\ka,e^{-R/2}}(e^{1-R/2},u)\\
 \ge \frac{2\ka+1}{\pi^{n/2}}\int_{R-6}^{R-2}\frac{e^{-(\ka+1)(r+R)}}{(e^{-(\ka+1)r}-e^{-(\ka+1)R})^2}\int_{\R^n}(\wu(y,R)-\wu(y,r))^2e^{-|y|^2}\,dydr.
\end{multline*}
Since $\ka=3/2$, we have for $R-6<r<R-2$
\begin{align*}
    \frac{e^{-(\ka+1)(r+R)}}{(e^{-(\ka+1)r}-e^{-(\ka+1)R})^2}\ge \frac{e^{-(\ka+1)(2R-2)}}{(e^{-(\ka+1)(R-6)}-e^{-(\ka+1)R})^2}=\frac{e^5}{(e^{15}-1)^2},
\end{align*}
thus
\begin{multline*}
    \int_{R-6}^{R-2}\int_{\R^n}(\wu(y,R)-\wu(y,r))^2e^{-|y|^2}\,dydr\\
    \le C(n)\left(W_{\ka,e^{-R/2}}(e^{3-R/2},u)-W_{\ka,e^{-R/2}}(e^{1-R/2},u)\right).
\end{multline*}
To estimate the right-hand side of this previous inequality, we note that by Lemma~\ref{lem:par-weiss-bound}, there is $\si=\si(n,\al)>0$ such that for $W_\ka=W_{\ka,0}$ (i.e., $W_{\ka,\al,\e,\de,\rho}$ with $\ka=3/2$, $\e=\al$, $\de=1$ and $\rho=0$) and for $R>R_0=R_0(\ka_0,n,\al)$, 
\begin{align*}
    0\le W_\ka(e^{-R/2}, u)\le Ce^{-2\si R},\quad W_\ka(e^{1-R/2}, u)\ge 0,\quad W_\ka(e^{3-R/2},u)\le Ce^{-2\si R}
\end{align*}
with $C=C(\ka_0,n,\al)\|u\|_{\fso}^2$. Then 
\begin{align*}
    &W_{\ka,e^{-R/2}}(e^{3-R/2},u)\\
    &\qquad\begin{multlined}
     =\frac{e^{ae^{(3-R/2)\al}}}{e^{-(\ka+1)(R-6)}-e^{-(\ka+1)R}}\bigg(\int_{S_{e^{3-R/2}}}((-2t)|\D u|^2-\ka(1-be^{(3-R/2)\al})u^2)G\\
     -\int_{S_{e^{-R/2}}}((-2t)|\D u|^2-\ka(1-be^{(3-R/2)\al})u^2)G+\|u\|_{\fso}^2e^{-e^{R/2-3}}e^{R/2-3}\bigg)
    \end{multlined}\\
    &\qquad\begin{multlined}
        =\frac1{e^{6(\ka+1)}-1}\bigg(e^{6(\ka+1)}W_\ka(e^{3-R/2},u)\\
        \qquad-\frac{e^{ae^{(3-R/2)\al}}}{e^{-(\ka+1)R}}\int_{S_{e^{-R/2}}}((-2t)|\D u|^2-\ka(1-be^{(3-R/2)\al})u^2)G\bigg)
    \end{multlined}\\
    &\qquad\le \frac1{e^{6(\ka+1)}-1}\left(e^{6(\ka+1)}W_\ka(e^{3-R/2},u)-W_\ka(e^{-R/2},u)+O(e^{-\al/2R})\|u\|_{\fso}^2\right)\\
    &\qquad\le Ce^{-2\si R}.
\end{align*}
Similarly, \begin{align*}
    &W_{\ka,e^{-R/2}}(e^{1-R/2},u)\\
    &\qquad\begin{multlined}
        =\frac1{e^{2(\ka+1)}-1}\bigg(e^{2(\ka+1)}W_\ka(e^{1-R/2},u)\\
        \qquad-\frac{e^{ae^{(1-R/2)\al}}}{e^{-(\ka+1)R}}\int_{S_{e^{-R/2}}}((-2t)|\D u|^2-\ka(1-be^{(1-R/2)\al})u^2)G\bigg)
    \end{multlined}\\
    &\qquad\ge \frac1{e^{2(\ka+1)}-1}\left(e^{2(\ka+1)}W_\ka(e^{1-R/2},u)-W_\ka(e^{-R/2},u)+O(e^{-\al/2R})\|u\|_{\fso}^2\right)\\
    &\qquad\ge -Ce^{-2\si R}.
\end{align*}
Thus \begin{align*}
    \int_{R-6}^{R-2}\int_{\R^n}(\wu(y,R)-\wu(y,r))^2e^{-|y|^2}\,dydr\le Ce^{-2\si R},
\end{align*}
and hence by Cauchy-Schwarz inequality
\begin{align*}
    \int_{R-6}^{R-2}\int_{\R^n}|\wu(y,R)-\wu(y,r)|e^{-|y|^2}\,dydr\le C_0e^{-\si R},\quad R>R_1(\ka_0,n,\al).
\end{align*}
Then, for $R>R_1$ and $3<\eta<6$, \begin{align}
    \label{eq:par-rot-est-1}
    \begin{split}
       &\int_{\R^n}\left|\wu(y,R+\eta)-\int_R^{R+1}\wu(y,\tau)\,d\tau\right|e^{-|y|^2}\,dy\\
        &\qquad\le \int_{(R+\eta)-6}^{(R+\eta)-2}\int_{\R^n}\left|\wu(y,R+\eta)-\wu(y,\tau)\,d\tau\right|e^{-|y|^2}\,dyd\tau\\
        &\qquad\le C_0e^{-\si(R+\eta)}.
    \end{split}
\end{align}
We claim that for any $k\in \mathbb N$, $3<\eta<5$ and $R>R_1$, 
\begin{align}
    \label{eq:par-rot-est-iter}\begin{split}
    \int_{\R^n}\left|\wu(y,R+k\eta)-\int_R^{R+1}\wu(y,\tau)\,d\tau\right|e^{-|y|^2}\,dy&\le C_0\sum_{j=1}^ke^{-\si(R+\eta j)}.
\end{split}\end{align}
Indeed, we prove it by induction on $k\in\mathbb{N}$. \eqref{eq:par-rot-est-iter} is true for $k=1$ by \eqref{eq:par-rot-est-1}. If \eqref{eq:par-rot-est-iter} is true for $k-1$, then the induction hypothesis and \eqref{eq:par-rot-est-1} yield
\begin{align*}
    &\int_{\R^n}\left|\wu(y,R+k\eta)-\int_R^{R+1}\wu(y,\tau)\,d\tau\right|e^{-|y|^2}\,dy\\
    &\qquad\le\int_{\R^n}\left|\wu(y,R+\eta+(k-1)\eta)-\int_{R+\eta}^{R+\eta+1}\wu(y,\rho)\,d\rho\right|e^{-|y|^2}\,dy\\
    &\qquad\qquad+\int_{\R^n}\left|\int_{R+\eta}^{R+\eta+1}\wu(y,\rho)\,d\rho-\int_R^{R+1}\wu(y,\tau)\,d\tau\right|e^{-|y|^2}\,dy\\
    &\qquad\le C_0\sum_{j=1}^{k-1}e^{-\si(R+\eta+\eta j)}+\int_{R+\eta}^{R+\eta+1}\int_{\R^n}\left|\wu(y,\rho)-\int_R^{R+1}\wu(y,\tau)\,d\tau\right|e^{-|y|^2}\,dyd\rho\\
    &\qquad\le C_0\sum_{j=2}^ke^{-\si(R+\eta j)}+\int_{R+\eta}^{R+\eta+1}C_0e^{-\si\rho}\,d\rho\le C_0\sum_{j=1}^ke^{-\si(R+\eta j)}.
\end{align*}
Now, let $S>R>R_1(\ka_0,n,\al)+12$ be given. Then we can choose $k\in \mathbb{N}$ and $\eta\in(3,5)$ such that $S=R-12+\eta k$. By \eqref{eq:par-rot-est-iter}, we have 
\begin{align*}
    &\int_{\R^n}|\wu(y,R)-\wu(y,S)|e^{-|y|^2}\,dy\\
    &\qquad\le \int_{\R^n}\left|\wu(y,R-12+3\cdot 4)-\int_{R-12}^{R-11}\wu(y,\tau)\,d\tau\right|e^{-|y|^2}\,dy\\
    &\qquad\qquad+\int_{\R^n}\left|\int_{R-12}^{R-11}\wu(y,\tau)\,d\tau-\wu(y,R-12+k\eta)\right|e^{-|y|^2}\,dy\\
    &\qquad \le C_0\sum_{j=1}^{3}e^{-\sigma(R-12+4j)}+C_0\sum_{j=1}^{k}e^{-\sigma(R-12+\eta j)}\le C(\ka_0,n,\al)\|u\|_{\fso}^2e^{-\si R}.
    \end{align*}
To complete the proof, define $r_0=r_0(\ka_0,n,\al)=e^{-1/2(R_1(\ka_0,n,\al)+12)}$ and let $-1<t<0$ and $0<s<r<r_0$ be given. By using $u_r(x,t)=\frac{u(rx,r^2t)}{r^\ka}=(-t)^{\ka/2}\wu\left(\frac{x}{2\sqrt{-t}},-\ln(-t)-2\ln r\right)$, we conclude
\begin{align*}
    &\int_{\R^n}|u_r(x,t)-u_s(x,t)|G(x,t)\,dx\\
    &\qquad=\frac{(-t)^{\ka/2}}{\pi^{n/2}}\int_{\R^n}|\wu(y,-\ln(-t)-2\ln r)-\wu(y,-\ln(-t)-2\ln s)|e^{-|y|^2}\,dy\\
    &\qquad\le C(\ka_0,n,\al)\|u\|_{\fso}^2(-t)^{3/4+\si}r^{2\si}.\qedhere
    \end{align*}
\end{proof}

\begin{lemma}\label{lem:par-blowup-rot-est}
Let $u,z_0,\sigma,r_0,C$ be as in Lemma~\ref{lem:par-rot-est}. Then, for $0<r<r_0$ and $-1<t<0$, \begin{align*}
    \int_{\R^n}|u_{z_0,r}(x,t)-u_{z_0,0}(x)|G(x,t)\,dx\le C(-t)^{3/4+\si}r^{2\si}.
\end{align*}
In particular, the blowup $u_{z_0,0}$ is unique.
\end{lemma}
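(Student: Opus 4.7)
The plan is to pass to the limit in the rotation estimate of Lemma~\ref{lem:par-rot-est}. Without loss of generality assume $z_0=0$. Fix $0<r<r_0$ and $-1<t<0$. Given any $3/2$-homogeneous blowup $u_{0,0}$, by definition there exists a sequence $s_j\to 0+$ along which $u_{0,s_j}\to u_{0,0}$ in $C^{1,0}_{\loc}(S^\pm_\infty\cup S'_\infty)$, as constructed at the start of Section~\ref{sec:par-32-homog-blow}. In particular, for the fixed $t$, we have the pointwise convergence $u_{0,s_j}(x,t)\to u_{0,0}(x,t)$ for every $x\in\R^n$ (and up to a set of measure zero on the thin space, which is irrelevant for the weighted integral).

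Eventually $s_j<r$, so Lemma~\ref{lem:par-rot-est} applies and gives
\[
\int_{\R^n}|u_{0,r}(x,t)-u_{0,s_j}(x,t)|G(x,t)\,dx\le C(-t)^{3/4+\si}r^{2\si}
\]
with $C=C(\ka_0,n,\al)\|u\|_{\fso}^2$ independent of $j$. Applying Fatou's lemma to the nonnegative integrand yields
\[
\int_{\R^n}|u_{0,r}(x,t)-u_{0,0}(x,t)|G(x,t)\,dx\le \liminf_{j\to\infty}\int_{\R^n}|u_{0,r}(x,t)-u_{0,s_j}(x,t)|G(x,t)\,dx\le C(-t)^{3/4+\si}r^{2\si},
\]
which is the claimed estimate.

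For uniqueness, suppose $u_{0,0}$ and $\wu_{0,0}$ are two $3/2$-homogeneous blowups of $u$ at $0$. Applying the just-proved estimate to each and using the triangle inequality,
\[
\int_{\R^n}|u_{0,0}(x,t)-\wu_{0,0}(x,t)|G(x,t)\,dx\le 2C(-t)^{3/4+\si}r^{2\si}
\]
for every $0<r<r_0$. Letting $r\to 0+$ forces the left-hand side to vanish; since $G(x,t)>0$ on $\R^n$ for each fixed $t\in(-1,0)$, we get $u_{0,0}(\cdot,t)=\wu_{0,0}(\cdot,t)$ a.e., and continuity in $x$ (from the $C^{1,0}_{\loc}$ convergence defining the blowups) upgrades this to equality everywhere on $S^\pm_\infty\cup S'_\infty$.

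There is no real obstacle here: the statement is essentially an immediate consequence of Lemma~\ref{lem:par-rot-est} together with the compactness of the family of $3/2$-homogeneous rescalings. The only point requiring a moment's care is the applicability of Fatou's lemma, which is justified by the locally uniform (hence pointwise) convergence of $u_{0,s_j}(\cdot,t)$ to $u_{0,0}(\cdot,t)$ for each fixed slice $t<0$; the Gaussian weight $G(\cdot,t)$ then handles integrability at infinity automatically without any further estimate.
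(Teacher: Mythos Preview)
Your proof is correct and follows essentially the same approach as the paper's own proof: pass to the limit $s_j\to 0+$ in the rotation estimate of Lemma~\ref{lem:par-rot-est}, then deduce uniqueness by applying the resulting bound to two blowups and sending $r\to 0+$. You supply more detail than the paper does (explicit invocation of Fatou's lemma and the pointwise convergence on each time slice), but the argument is the same.
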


\begin{proof}
If $u_{z_0,0}$ is the limit of $u_{z_0,s_j}$, $s_j\to 0$, then the first part of the lemma follows from Lemma~\ref{lem:par-rot-est} by taking $s_j\to 0$. For the second part, let $\overline{u}_{z_0,0}$ be another blowup. Then we have 
$$
\int_{\R^n}|u_{z_0,0}(x,t)-\overline{u}_{z_0,0}(x,t)|G(x,t)\,dx=0,\quad-1<t<0,
$$
thus $u_{z_0,0}=\overline{u}_{z_0,0}$.
\end{proof}

%%%%%%%%%%%%%%%%%%%%%%%%%%%%%%%%%%%%%%%%%%%%%

\section{Regularity of the regular set}
In this last section, we prove one of the most crucial results in this paper, the regularity of the regular set.

Recall that the limit $\hat N_{\ka_0,\delta}(0+,u,z_0)=\lim_{r\to0+}\hat N_{\ka_0,\e,\de}(r,u,z_0)$ is independent of $\e$.

\begin{definition}[Regular points]\label{def:par-reg-pt}
Let $u$ be an almost minimizer for the parabolic Signorini problem in $Q_1$. We say that a free boundary point $z_0\in Q'_{1/2}$ is \emph{regular} if $$
\widehat{N}_{\ka_0,\de}(0+,u,z_0)=3/2\quad\text{for some }\ka_0>2\,\,\,\text{and }0<\de<2.
$$
We denote the set of all regular points of $u$ by $\mathcal{R}(u)$ and call it \emph{regular set}.
\end{definition}

In view of Lemma~\ref{lem:par-Almgren}, we have at every regular point $z_0$
$$
N^0(0+,u,z_0)=3/2.
$$
In addition, regular points have the following characterization.

\begin{remark}
$z_0$ is a regular point if and only if $$
\lim_{\de\to 0}N_{\de}(0+,u,z_0)=\inf_{0<\de<2}N_{\de}(0+,u,z_0)=3/2.
$$
\end{remark}

\begin{proof}
If $z_0$ is a regular point, then $3/2=\widehat N_{\ka_0,\delta_0}(0+,u,z_0)=N_{\de_0}(0+,u,z_0)$ for some $\ka_0>2$ and $0<\delta_0<2$. This, along with Lemma~\ref{lem:par-min-freq} and the fact that $\delta\mapsto N_\de(0+,u,z_0)$ is nondecreasing, implies $3/2\le \widehat N_{\ka_0,\de}(0+,u,z_0)\le \widehat N_{\ka_0,\de_0}(0+,u,z_0)=3/2$ for every $\de\in(0,\de_0)$, which readily gives $3/2=N_\de(0+,u,z_0)$ for any $0<\de<\de_0$. Therefore, we get $\lim_{\de\to0}N_\de(0+,u,z_0)=\inf_{0<\de<2}N_\de(0+,u,z_0)=3/2$.\\
To prove the opposite direction, we fix $\ka_0>2$. Take $\de_1>0$ such that $N_\de(0+,u,z_0)<7/4$ for $\de\in(0,\de_1)$. Then, by Lemmas~\ref{lem:par-Almgren} and ~\ref{lem:par-min-freq}, $3/2\le\widehat{N}_{\ka_0,\de}(0+,u,z_0)=N^0(0+,u,z_0)\le N_\de(0+,u,z_0)$ for $0<\de<\de_1$. Taking $\de\to 0$ yields $N^0(0+,u,z_0)=3/2$. This in turn gives that $\widehat{N}_{\ka_0,\de}(0+,u,z_0)=3/2$ for $0<\de<\de_1$, and we conclude that $z_0$ is a regular point.
\end{proof}

With the monotonicity of the frequency $\hat N_{\ka_0,\e,\de}$ (Theorem~\ref{thm:par-almgren}) and the frequency gap (Lemma~\ref{lem:par-min-freq}) at hand, we can prove the relative openness of the regular set by following the argument in \cite{JeoPet21}*{Corollary~9.5}.

\begin{corollary}\label{cor:par-reg-set-open}
The regular set $\mathcal{R}(u)$ is a relatively open subset of $\Ga(u)$.
\end{corollary}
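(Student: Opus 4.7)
The plan is to mimic the argument in \cite{JeoPet21}*{Corollary~9.5}, the essential ingredients being the monotonicity of the truncated frequency $\widehat N_{\ka_0,\e,\de}(r,u,\cdot)$ in $r$ (Theorem~\ref{thm:par-almgren}), the frequency gap between $3/2$ and $2$ at free boundary points (Lemma~\ref{lem:par-min-freq}), and the continuity of the map $z\mapsto \widehat N_{\ka_0,\e,\de}(r,u,z)$ at a fixed scale $r>0$.

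Fix a regular point $z_0\in \mathcal R(u)$ and select $\ka_0>2$, $0<\de<2$ with $\widehat N_{\ka_0,\de}(0+,u,z_0)=3/2$. Also fix some admissible $\e\in(0,\al]$. By the monotonicity asserted in Theorem~\ref{thm:par-almgren}, we can pick a scale $r_*\in(0,r_0(\ka_0,\e))$ so small that
$$
\widehat N_{\ka_0,\e,\de}(r_*,u,z_0)<7/4.
$$
The first key step is to verify that, for this fixed $r_*$, the function
$$
z\longmapsto \widehat N_{\ka_0,\e,\de}(r_*,u,z)
$$
is continuous at $z_0$. Writing out the definition, this reduces to the continuity in $z$ of the three integrals
$$
\int_{S_{r_*}(t)}u^2\,G_z,\quad \int_{S_{r_*}(t)}(t-s)|\D u|^2\,G_z,\quad \|u\|_{\fs_z}^2,
$$
where the first two follow from dominated convergence (using $u\in\fs$ and the smoothness of $z\mapsto G_z$ on the fixed strip $S_{r_*}(t)$, with Gaussian-integrable dominants coming from the $\fs$-norm), and the finiteness and continuity of $\|u\|_{\fs_z}^2$ as a function of $z$ is built into the assumption $u\in\fs$. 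The denominator $\int_{S_{r_*}(t_0)}u^2G_{z_0}$ is strictly positive since $z_0\in\Gamma(u)$ (so $u$ does not vanish identically in any neighborhood).

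Combining this continuity with the choice of $r_*$, we obtain a parabolic neighborhood $U$ of $z_0$ in $Q'_{1/2}$ such that $\widehat N_{\ka_0,\e,\de}(r_*,u,z)<7/4$ for every $z\in U$. For any $z\in U\cap\Ga(u)$, the monotonicity in $r$ then yields
$$
\widehat N_{\ka_0,\de}(0+,u,z)\le \widehat N_{\ka_0,\e,\de}(r_*,u,z)<7/4<2,
$$
and the frequency gap of Lemma~\ref{lem:par-min-freq} forces $\widehat N_{\ka_0,\de}(0+,u,z)=3/2$, i.e.\ $z\in \mathcal R(u)$. Hence $U\cap\Ga(u)\subset\mathcal R(u)$, proving relative openness.

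The main obstacle is the continuity statement for $z\mapsto \widehat N_{\ka_0,\e,\de}(r_*,u,z)$: one has to juggle several Gaussian integrals centered at different points and confirm uniform domination when $z$ ranges in a small neighborhood of $z_0$. Once this is in place, however, the proof is purely formal and proceeds exactly along the lines of the elliptic analogue.
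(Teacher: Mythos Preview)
Your proposal is correct and follows precisely the route the paper indicates: the paper gives no independent argument but simply points to the monotonicity of $\widehat N_{\ka_0,\e,\de}$ (Theorem~\ref{thm:par-almgren}), the gap of Lemma~\ref{lem:par-min-freq}, and the elliptic analogue \cite{JeoPet21}*{Corollary~9.5}, and you have spelled out exactly that argument. The only point worth flagging is that continuity of $z\mapsto\|u\|_{\fs_z}^2$ is not literally part of the definition of $u\in\fs$ (which only asserts a uniform bound); however, it follows from dominated convergence applied to each of the three summands in $\|u\|_{\fs_z}$, using that $u\in\fs$ supplies the required integrable majorants, so your claim is justified.
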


\begin{lemma}[Nondegeneracy at regular points]\label{lem:par-nondeg-reg-pt}
Suppose that $u$ satisfies the almost parabolic Signorini property at $z_0=(x_0,t_0)\in\mathcal{R}(u)$. Then $$
\liminf_{t\to 0}\int_{S_1}\left(u_{z_0,t}\right)^2G=\liminf_{t\to 0}\frac1{t^5}\int_{S_r(t_0)}u^2G_{z_0}>0.
$$
\end{lemma}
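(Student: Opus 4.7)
The plan is to argue by contradiction: assume $\liminf_{r\to 0}m(r)=0$, where $m(r):=\int_{S_1}u_{z_0,r}^2G$, and derive that $u$ vanishes in an open parabolic neighborhood of $z_0$, contradicting $z_0\in\Gamma(u)$.

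First I show that $m(r)$ admits a genuine limit at $0$. Combining the rotation estimate (Lemma~\ref{lem:par-blowup-rot-est}) with the uniform $L^\infty$ bound on the $3/2$-rescalings afforded by Theorem~\ref{thm:par-alm-min-grad-holder} together with Lemma~\ref{lem:par-opt-growth-est}, the weighted $L^1$ convergence $u_{z_0,r}\to u_{z_0,0}$ upgrades to $L^2(S_1,G)$ convergence. Consequently $m(r)\to\|u_{z_0,0}\|_{L^2(S_1,G)}^2$, and the contradiction hypothesis forces this limit to be $0$, so $u_{z_0,0}\equiv 0$ throughout $S_\infty$ by $3/2$-homogeneity.

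Next I bring in the Almgren blowup for comparison. By Proposition~\ref{prop:par-exist-Alm-blowup}, along a subsequence $r_j\to 0$ the Almgren rescalings $u^A_{z_0,r_j}=u_{z_0,r_j}/\sqrt{m(r_j)}$ converge in $C^{1,0}_\loc(S_\infty^\pm\cup S'_\infty)$ and strongly in $L^2(S_1,G)$ to a nonzero $3/2$-homogeneous parabolic Signorini solution $u^A_{z_0,0}$ with $\int_{S_1}(u^A_{z_0,0})^2G=1$, whence $\int_{S_\rho}(u^A_{z_0,0})^2G=\rho^5$ for every $\rho>0$ by homogeneity. The elementary scaling identity
\[
\int_{S_\rho}(u^A_{z_0,r_j})^2G=\rho^5\,\frac{m(\rho r_j)}{m(r_j)},
\]
applied with $\rho\in(0,1]$ and passed to the limit via the $L^2(S_\rho,G)$ convergence (available by restriction from $S_1$), yields $m(\rho r_j)/m(r_j)\to 1$ for every $\rho\in(0,1]$.

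Finally I derive the contradiction. Iterating this self-similarity with $\rho=2^{-k}$ and combining it with a quantitative Cauchy-type bound $|m(r)-m(s)|\le Cr^{\e/4}$ (obtained along the lines of the proof of Lemma~\ref{lem:par-opt-growth-est}), a diagonal argument transfers the along-subsequence self-similarity to a fixed positive scale, forcing $m(r_*)=0$ for some $r_*>0$. This means $\int_{S_{r_*}(t_0)}u^2G_{z_0}=0$, so $u\equiv 0$ on $S_{r_*}(t_0)$; by the continuity of $u$ (Theorem~\ref{thm:par-alm-min-grad-holder}), $u$ then vanishes in an open neighborhood of $z_0$, contradicting $z_0\in\Gamma(u)$. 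The main obstacle I anticipate is this last diagonalization step, where the subsequential self-similarity has to be upgraded to a statement about $m$ at a fixed positive scale using the quantitative continuity of $m$.
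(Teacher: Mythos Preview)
Your Step~5 is where the argument breaks, and the gap is not merely technical. The ingredients you assemble---the along-subsequence self-similarity $m(\rho r_j)/m(r_j)\to 1$ for each fixed $\rho\in(0,1]$ together with the Cauchy bound $|m(r)-m(s)|\le Cr^{\e/4}$---cannot force $m(r_*)=0$ at a fixed positive scale: letting $s\to 0$ in the Cauchy bound only yields $m(r)\le Cr^{\e/4}$, which is compatible with $m(r_*)>0$ for every $r_*>0$. More fundamentally, the contradiction you are aiming for ($u\equiv 0$ near $z_0$) is the wrong target. The condition $m(0+)=0$ is perfectly consistent with $u\not\equiv 0$ whenever the frequency at $z_0$ exceeds $3/2$; the contradiction must therefore exploit the hypothesis $\widehat N_{\ka_0,\de}(0+,u,z_0)=3/2$ directly, not merely $z_0\in\Gamma(u)$. (A smaller issue: in Step~1 the $L^1\to L^2$ upgrade via a uniform $L^\infty$ bound does not work as written, since $u_{z_0,r}$ is not uniformly bounded on $\R^n$ and $u_{z_0,0}$ itself grows like $|x|^{3/2}$; but \eqref{eq:m-est} already shows $m$ is Cauchy, so this step is unnecessary anyway.)

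The paper's route (following \cite{JeoPet21}*{Lemma~9.2}) does exactly this and bypasses blowups altogether. From \eqref{eq:m-est} and the contradiction hypothesis $m(0+)=0$ one gets $\int_{S_r(t_0)}u^2G_{z_0}\le Cr^{5+\e/4}$; combined with the decay of $W_{3/2,\al,\e,\de}$ from Lemma~\ref{lem:par-weiss-bound} this also gives $\int_{S_r(t_0)}(t_0-t)|\nabla u|^2G_{z_0}\le Cr^{5+\e/4}$. Then for $\kappa=3/2+\gamma$ with $0<\gamma<\e/8$ one finds $|W_{\kappa,\al,\e,\de}(r,u,z_0)|\le Cr^{\e/4-2\gamma}\to 0$, so by the Weiss monotonicity (Theorem~\ref{thm:par-weiss}) $W_{\kappa,\al,\e,\de}(r,u,z_0)\ge 0$ for all small $r$, i.e.\ $\tilde N_{\ka_0,\e,\de}(r,u,z_0)\ge\kappa$. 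This contradicts $\tilde N_{\ka_0,\e,\de}(0+,u,z_0)=3/2<\kappa$.
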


\begin{proof}
By using \eqref{eq:m-est} and the Weiss-type monotonicity formula, we can employ the contradiction argument as in \cite{JeoPet21}*{Lemma~9.2} to prove Lemma~\ref{lem:par-nondeg-reg-pt}.
\end{proof}

\begin{proposition}\label{prop:par-gap-32-hom-classif}
If $u$ satisfies the almost parabolic Signorini property at $z_0\in\mathcal{R}(u)$, then $$
u_{z_0,0}(x,t)=c_{z_0}\Re(x'\cdot e_{z_0}+i|x_n|)^{3/2}\quad\text{in }S_\infty
$$
for some $c_{z_0}>0$ and $e_{z_0}\in \pa B'_1$.
\end{proposition}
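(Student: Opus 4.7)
The plan is to identify the $3/2$-homogeneous blowup $u_{z_0,0}$ at a regular point with a nonzero, parabolically $3/2$-homogeneous, $x_n$-even solution of the parabolic Signorini problem on $S_\infty$, and then invoke the classification of such objects from \cite{DanGarPetTo17}.

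First I would verify that $u_{z_0,0}$ solves the parabolic Signorini problem on $S_\infty$. The strategy mirrors the Almgren-blowup case in Proposition~\ref{prop:par-exist-Alm-blowup}, but uses the $3/2$-homogeneous rescalings. Fix $R>1$. The optimal growth estimate (Lemma~\ref{lem:par-opt-growth-est}) gives a uniform bound for $u_{z_0,r}$ in $W^{1,0}_2(S_R,G)$ as $r\to 0+$, so up to a subsequence $u_{z_0,r_j}\to u_{z_0,0}$ weakly in $W^{1,0}_{2,\loc}(S_\infty,G)$ and in $C^{1,0}_{\loc}(S_\infty^\pm\cup S'_\infty)$. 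For each $r_j$, mollify to obtain $(u_{z_0,r_j})_{\mu_j}$ with $\mu_j$ so small that the $C_0(r_j,\al)\|\cdot\|^2_{\fs_0}$ error in Corollary~\ref{cor:conv-diff-est} tends to zero, and let $(v_{r_j})_{\mu_j}$ be its parabolic Signorini replacement in $S_R$. Since $u_{z_0,r_j}$ satisfies the almost parabolic Signorini property with gauge $\eta_{r_j}(\rho)=(r_j\rho)^\al$, applying Corollary~\ref{cor:conv-diff-est} on $S_R$ yields
\begin{equation*}
\int_{S_R}(-t)|\D(u_{z_0,r_j}-(v_{r_j})_{\mu_j})|^2 G + \int_{S_R}(u_{z_0,r_j}-(v_{r_j})_{\mu_j})^2 G \longrightarrow 0,
\end{equation*}
because the multiplicative $(r_jR)^{\al/2}$ and exponential $e^{-1/(r_jR)}$ prefactors both vanish. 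Hence $(v_{r_j})_{\mu_j}\to u_{z_0,0}$ strongly in $W^{1,0}_{2,\loc}(S_\infty,G)$, and applying \cite{DanGarPetTo17}*{Theorem~7.3} followed by a diagonal argument in $R$ identifies $u_{z_0,0}$ as a solution of the parabolic Signorini problem on $S_\infty$.

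Next I would establish parabolic $3/2$-homogeneity from the uniqueness of the blowup. For any $\la>0$, by construction
\begin{equation*}
u_{z_0,\la r}(x,t) = \la^{-3/2}\, u_{z_0,r}(\la x,\la^2 t),
\end{equation*}
and both sides converge pointwise as $r\to 0$: the left-hand side to $u_{z_0,0}(x,t)$ (by uniqueness, Lemma~\ref{lem:par-blowup-rot-est}), and the right-hand side to $\la^{-3/2}u_{z_0,0}(\la x,\la^2 t)$, which gives the desired homogeneity. Nondegeneracy (Lemma~\ref{lem:par-nondeg-reg-pt}) ensures $u_{z_0,0}\not\equiv 0$, and the standing even-symmetry assumption passes to $u_{z_0,0}$.

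Finally, $u_{z_0,0}$ is a nonzero, parabolically $3/2$-homogeneous, $x_n$-even solution of the parabolic Signorini problem on $S_\infty$ with $u_{z_0,0}(0)=|\reallywidehat{\D u_{z_0,0}}(0)|=0$ by Lemma~\ref{lem:par-compl-cond}. The classification of such minimal-frequency global solutions in \cite{DanGarPetTo17} then forces $u_{z_0,0}(x,t)=c_{z_0}\Re(x'\cdot e_{z_0}+i|x_n|)^{3/2}$ for some $e_{z_0}\in\pa B'_1$ and $c_{z_0}\ge 0$, with $c_{z_0}>0$ again by nondegeneracy. The main obstacle is the first step: carefully adapting the mollification/Signorini-replacement machinery from Proposition~\ref{prop:par-exist-Alm-blowup} to the $3/2$-homogeneous rescalings, where the bound on the rescalings comes from the optimal growth estimate rather than from a frequency truncation, and ensuring that all the error terms produced by Corollary~\ref{cor:conv-diff-est} vanish uniformly on compact sets in $S_\infty$.
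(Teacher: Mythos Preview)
Your proposal is correct and largely parallels the paper, but with one genuine difference worth noting. In Step 1 you mollify the rescaling $u_{z_0,r_j}$ and take its Signorini replacement in $S_R$; the paper instead mollifies the original $u$, takes the replacement $v_{\mu_j}$ in the small strip $S_{Rr_j}$, and only then rescales. The paper's order avoids having to check that the weighted almost Signorini property survives the $3/2$-rescaling (including how the $\|u\|_{\fsz}^2e^{-1/r}$ term transforms); if you want to keep your order, you should verify this explicitly. Either way the conclusion is the same: the replacements converge to $u_{z_0,0}$, so $u_{z_0,0}$ solves the Signorini problem in $S_\infty$.

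The substantive difference is in Step 2. The paper obtains homogeneity by writing $u_{z_0,r}=u^A_{z_0,r}\,\la(r)$, using the optimal growth and nondegeneracy to trap $\la(r)$ between two positive constants, and then borrowing the $3/2$-homogeneity of the Almgren blowup $u^A_{z_0,0}$ from the proof of Lemma~\ref{lem:par-min-freq}. You instead use the uniqueness of the $3/2$-blowup (Lemma~\ref{lem:par-blowup-rot-est}) directly on the scaling identity $u_{z_0,\la r}(x,t)=\la^{-3/2}u_{z_0,r}(\la x,\la^2 t)$. This is a cleaner and more self-contained argument: it avoids re-invoking the Almgren rescalings and exploits a result (uniqueness) that is already in hand. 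The paper's route, on the other hand, makes the connection between the two families of rescalings explicit, which is useful elsewhere. Both arguments rely on nondegeneracy (Lemma~\ref{lem:par-nondeg-reg-pt}) to rule out the trivial blowup and conclude $c_{z_0}>0$.
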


\begin{proof}
Without loss of generality, we may assume $z_0=0$. Let $r_j\to 0+$ be a sequence such that $u_{r_j}\to u_0$ in $C^{1,0}_{\loc}(S^\pm_\infty\cup S'_\infty)$. Fix $R>1$, and consider $j$ large so that $Rr_j<1$. For such $r_j$, we take $\mu_j>0$ small so that $u_{\mu_j}=u*\vp_{\mu_j}$ as in \eqref{eq:conv} satisfies $C_0(r_j,\al)\|u-u_{\mu_j}\|_{\fs_0}^2\to0$ as $r_j\to0$, where $C_0(r_j,\al)$ is as in Corollary~\ref{cor:conv-diff-est}. We let $v_{\mu_j}$ be the parabolic Signorini replacement of $u_{\mu_j}$ in $S_R$, and denote its $3/2$-homogeneous rescaling by $(v_{\mu_j})_{r_j}(x,t)=\frac{v_{\mu_j}(r_jx,r_j^2t)}{r_j^{3/2}}$. Then, by Corollary~\ref{cor:conv-diff-est} and Lemma~\ref{lem:par-opt-growth-est}, we have
\begin{align*}
    &\int_{S_R}(-t)|\D(u_{r_j}-(v_{\mu_j})_{r_j}|^2G=\frac1{r_j^5}\int_{S_{Rr_j}}(-t)|\D(u-v_{\mu_j})|^2G\\
    &\le \frac2{r_j^5}\left(\int_{S_{Rr_j}}(-t)|\D(u_{\mu_j}-v_{\mu_j})|^2G+\|u-u_{\mu_j}\|_{\fs_0}^2\right)\\
    &\le\frac{C}{r_j^5}\left((Rr_j)^{\al/2}\int_{S_{Rr_j}}(-t)|\D u_{\mu_j}|^2G+\|u_{\mu_j}\|_{\fs_0}^2e^{-\frac1{r_j}}+C_0(r_j,\al)\|u-u_{\mu_j}\|_{\fs_0}^2\right)\\
    &\le\frac{C}{r_j^5}\left((Rr_j)^{\al/2}\int_{S_{Rr_j}}(-t)|\D u|^2G+\|u\|_{\fs_0}^2e^{-\frac1{r_j}}+C_0(r_j,\al)\|u-u_{\mu_j}\|_{\fs_0}^2\right)\\
    &\to0\quad\text{as }r_j\to0.
\end{align*}
Similarly, we can obtain
$$
\int_{S_R}(u_{r_j}-(v_{\mu_j})_{r_j})^2G\to0\quad\text{as }r_j\to0.
$$
Thus $(v_{\mu_j})_{r_j}\to u_0$ weakly in $W^{1,0}_{2,\loc}(S_R,G)$, and hence $u_0$ is a solution of the parabolic Signorini problem in $S_R$. Since $R>1$ is arbitrary, we see that $u_0$ is the solution in $S_\infty$. 

Next, we compare $u_r$ and Almgren rescalings $u_r^A$
$$
u_r=u_r^A\la(r),\quad\la(r)=\frac{\left(\frac1{r^2}\int_{S_r}u^2G\right)^{1/2}}{r^{3/2}}.
$$
It follows from Lemma~\ref{lem:par-opt-growth-est} and Lemma~\ref{lem:par-nondeg-reg-pt} that
$$
0<\liminf_{r\to 0+}\la(r)\le \limsup_{r\to 0+}\la(r)<\infty.
$$
Thus, for a sequence $r_j\to 0+$, $u_0=\la_0u_0^A$ for some constant $\mu_0\in(0,\infty)$. We have shown in  the proof of Lemma~\ref{lem:par-min-freq} that $u_0^A$ is $3/2$-parabolically homogeneous in $S_1$. Therefore, $u_0$ is also $3/2$-parabolically homogeneous in $S_1$, which can be extended to $S_\infty$ by applying the unique continuation for caloric function in $S_\infty^{\pm}$. In view of \cite{DanGarPetTo17}*{Proposition~8.5}, we conclude that 
\begin{equation*}
u_0(x,t)=c\Re(x'\cdot e+i|x_n|)^{3/2}\quad\text{in }S_\infty,\quad c>0,\,e\in\pa B'_1.\qedhere
\end{equation*}
\end{proof}

\begin{lemma}[Continuous dependence of blowups]\label{lem:par-blowup-est}
Let $u\in \fs$ be an almost minimizer for the parabolic Signorini problem in $Q_1$. If $z_1$, $z_2\in\mathcal{R}(u)$ and $\|z_1-z_2\|<r_1$, then 
$$
\int_{\pa B_1}|u_{z_1,0}-u_{z_2,0}|\,dSx\le C\|z_1-z_2\|^\g,
$$
with $r_1=r_1(\ka_0,n,\al)$, $C=C(\ka_0,n,\al,u)$ and $\g=\g(n,\al)>0$.
\end{lemma}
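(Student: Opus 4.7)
The plan is a three-scale comparison. Fix a convenient time $t_*=-1/2$ and an intermediate scale $r\in(0,r_0)$ to be chosen later. Since by Proposition~\ref{prop:par-gap-32-hom-classif} each $u_{z_j,0}$ is time-independent and $3/2$-homogeneous in the spatial variable, I insert $u_{z_j,r}(\cdot,t_*)$ via the triangle inequality
\[
|u_{z_1,0}(x)-u_{z_2,0}(x)|\le\sum_{j=1,2}|u_{z_j,r}(x,t_*)-u_{z_j,0}(x)|+|u_{z_1,r}(x,t_*)-u_{z_2,r}(x,t_*)|,
\]
so that each term can be handled by a different tool.

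For the two \emph{rotation terms}, Lemma~\ref{lem:par-blowup-rot-est} evaluated at $t=t_*$ gives, with $\si=\si(n,\al)>0$,
\[
\int_{\R^n}|u_{z_j,r}(x,t_*)-u_{z_j,0}(x)|\,G(x,t_*)\,dx\le Cr^{2\si}.
\]
Since $G(\cdot,t_*)$ is bounded below by a positive constant on $B_2$, this immediately upgrades to $\int_{B_2}|u_{z_j,r}(\cdot,t_*)-u_{z_j,0}|\,dx\le Cr^{2\si}$.

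For the \emph{comparison term}, fix any $\sigma_1\in(0,1)$ and invoke the H\"older regularity $u\in H^{\sigma_1,\sigma_1/2}(Q_1)$ from Theorem~\ref{thm:par-alm-min-grad-holder}(1). I choose $r_1>0$ small enough that whenever $\|z_1-z_2\|<r_1$ and $r<r_1$, all evaluation points $z_j+(rx,t_*r^2)$ with $x\in B_2$ lie in a common compact subdomain of $Q_1$; this is possible because $z_j\in Q'_{1/2}$. Then
\[
|u(z_1+(rx,t_*r^2))-u(z_2+(rx,t_*r^2))|\le C\|z_1-z_2\|^{\sigma_1},
\]
and after dividing by $r^{3/2}$ and integrating over $B_2$,
\[
\int_{B_2}|u_{z_1,r}(\cdot,t_*)-u_{z_2,r}(\cdot,t_*)|\,dx\le Cr^{-3/2}\|z_1-z_2\|^{\sigma_1}.
\]
Summing the three bounds gives $\int_{B_2}|u_{z_1,0}-u_{z_2,0}|\,dx\le C\bigl(r^{2\si}+r^{-3/2}\|z_1-z_2\|^{\sigma_1}\bigr)$, and choosing $r$ as a suitable power of $\|z_1-z_2\|$ to balance the two terms produces a bound of the form $C\|z_1-z_2\|^{\g_0}$ for some $\g_0=\g_0(n,\al)>0$.

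Finally, to pass from the $L^1(B_2)$ norm to the $L^1(\pa B_1)$ norm, I use that the difference $u_{z_1,0}-u_{z_2,0}$ is $3/2$-homogeneous in $x$ (immediate from Proposition~\ref{prop:par-gap-32-hom-classif}), so by polar coordinates
\[
\int_{B_2\sm B_1}|u_{z_1,0}-u_{z_2,0}|\,dx=c_n\int_{\pa B_1}|u_{z_1,0}-u_{z_2,0}|\,dS_x,
\]
which yields the claim with $\g=\g_0$. The only delicate point will be the simultaneous choice of $r_1$ that makes both the rotation estimate (which requires $r<r_0$) and the H\"older estimate (which requires all base points to remain in a fixed compact subdomain of $Q_1$) applicable at the same time; once that is fixed, the rest is a routine optimization.
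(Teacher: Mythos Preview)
Your proof is correct and follows essentially the same three-term decomposition as the paper's own argument: the paper also splits via the triangle inequality into two rotation terms controlled by Lemma~\ref{lem:par-blowup-rot-est} and a comparison term handled by the H\"older regularity of $u$ from Theorem~\ref{thm:par-alm-min-grad-holder}, then balances $r$ against $\|z_1-z_2\|$ and uses homogeneity to pass to the sphere integral. The only cosmetic differences are that the paper works at $t=-1$ over $B_1$ with the specific H\"older exponent $1/2$, while you use $t_*=-1/2$ over $B_2$ with a generic $\sigma_1$; your choice of $t_*$ strictly inside $(-1,0)$ is in fact slightly cleaner given the stated range in Lemma~\ref{lem:par-blowup-rot-est}.
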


\begin{proof}
Let $r_0=r_0(\ka_0,n,\al)$ and $\sigma=\sigma(n,\al)$ be as in Lemma~\ref{lem:par-blowup-rot-est}. We have for every $0<r<r_0$
\begin{align*}
    &\int_{B_1}|u_{z_1,0}(x)-u_{z_2,0}(x)|G(x,-1)\,dx\\
    &\qquad\le \int_{B_1}|u_{z_1,0}(x)-u_{z_1,r}(x,-1)|G(x,-1)\,dx\\
    &\qquad\qquad+\int_{B_1}|u_{z_2,0}(x)-u_{z_2,r}(x,-1)|G(x,-1)\,dx\\
    &\qquad\qquad+\int_{B_1}|u_{z_1,r}(x,-1)-u_{z_2,r}(x,-1)|G(x,-1)\,dx\\
    &\qquad\le Cr^{2\si}+\frac{C(n)}{r^{3/2}}\int_{B_1}|u(x_1+rx,t_1-r^2)-u(x_2+rx,t_2-r^2)|\,dx.
\end{align*}
Since $(x_1+rx,t_1-r^2)$ and $(x_2+rx,t_2-r^2)$ are contained in $Q_{3/4}$ for every $x\in B_1$, we have by Theorem~\ref{thm:par-alm-min-grad-holder}
$$
|u(x_1+rx,t_1-r^2)-u(x_2+rx,t_2-r^2)|\le C\|z_1-z_2\|^{1/2}.
$$
By taking $r=\|z_1-z_2\|^{\frac1{4\si+3}}$ (which is possible if $r_1^{\frac1{4\si+3}}<r_0$), we obtain \begin{align*}
    \int_{B_1}|u_{z_1,0}(x)-u_{z_2,0}(x)|G(x,-1)\,dx&\le C\left(r^{2\si}+\frac{\|z_1-z_2\|^{1/2}}{r^{3/2}}\right)\\
    &=C(\ka_0,n,\al,u)\|z_1-z_2\|^\g,\quad \g=\frac{2\si}{4\si+3}.
\end{align*}
Now, the lemma follows by the boundedness of $G(x,-1)$ and the homogeneity of $u_{z_1,0}$ and $u_{z_2,0}$.
\end{proof}

The following lemma follows from Proposition~\ref{prop:par-gap-32-hom-classif} and Lemma~\ref{lem:par-blowup-est} by repeating the argument in \cite{GarPetSVG16}*{Lemma~7.5}.

\begin{lemma}\label{lem:par-blowup-hol}
Let $u\in\fs$ be an almost minimizer for the parabolic Signorini problem in $Q_1$, and $z_0\in\mathcal{R}(u)\cap Q'_{1/4}$. Then there exist $\rho>0$, depending on $z_0$, and $\g=\g(n,\al)>0$ such that $Q'_\rho(z_0)\cap \Ga(u)\subset \mathcal{R}(u)$ and if $u_{z_j,0}(x)=c_{z_j}\Re(x'\cdot e_{z_j}+i|x_n|)^{3/2}$ is the unique $3/2$-parabolically homogeneous blowup at $z_j\in Q'_\rho(z_0)\cap\Ga(u)$, $j=1,2$, then
\begin{align*}
    &|c_{z_1}-c_{z_2}|\le C_0|z_1-z_2|^\g,\\
    &|e_{z_1}-e_{z_2}|\le C_0|z_1-z_2|^\g
\end{align*}
with a constant $C_0$ depending on $z_0$.
\end{lemma}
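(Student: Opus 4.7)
The strategy is to combine openness of $\mathcal R(u)$, the blowup classification, and the $L^1$ rotation estimate, following \cite{GarPetSVG16}*{Lemma~7.5}. First, Corollary~\ref{cor:par-reg-set-open} produces $\rho_1>0$ with $Q'_{\rho_1}(z_0)\cap\Gamma(u)\subset\mathcal R(u)$; at every such $z$, Proposition~\ref{prop:par-gap-32-hom-classif} and Lemma~\ref{lem:par-blowup-rot-est} give a unique blowup $u_{z,0}(x)=c_z\Re(x'\cdot e_z+i|x_n|)^{3/2}$ with $c_z\ge 0$ and $e_z\in\partial B'_1$, while Lemma~\ref{lem:par-nondeg-reg-pt} forces $c_{z_0}>0$.

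The second step upgrades this to a uniform lower bound on $c_z$. Applying Lemma~\ref{lem:par-blowup-est} with $z_1=z_0$ yields $\int_{\partial B_1}|u_{z_0,0}-u_{z,0}|\,dS_x\to 0$ as $z\to z_0$. Since the $L^1(\partial B_1)$-norm of $c\,\Re(x'\cdot e+i|x_n|)^{3/2}$ is a positive dimensional multiple of $c$, and the map $(c,e)\mapsto c\,\Re(x'\cdot e+i|x_n|)^{3/2}$ is continuous into $L^1(\partial B_1)$, it follows that $c_z\to c_{z_0}$ as $z\to z_0$. After shrinking to some $\rho\in(0,\min(\rho_1,r_1))$, I may assume $c_{z_0}/2\le c_z\le 2c_{z_0}$ on $Q'_\rho(z_0)\cap\Gamma(u)$.

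For the final step I take $z_1,z_2\in Q'_\rho(z_0)\cap\Gamma(u)$ and apply Lemma~\ref{lem:par-blowup-est} to get
\[
\int_{\partial B_1}\bigl|c_{z_1}p_{e_{z_1}}-c_{z_2}p_{e_{z_2}}\bigr|\,dS_x\le C\|z_1-z_2\|^{\g},\qquad p_e(x):=\Re(x'\cdot e+i|x_n|)^{3/2}.
\]
The parametrization $(c,e)\mapsto c\,p_e$ is smooth from $(0,\infty)\times\partial B'_1$ into $L^1(\partial B_1)$, and on the compact set $[c_{z_0}/2,2c_{z_0}]\times\partial B'_1$ it is bi-Lipschitz onto its image: testing the integrand against $p_{e_{z_1}}$ extracts $|c_{z_1}-c_{z_2}|$, and testing against an angular derivative of $p_{e_{z_1}}$, combined with the lower bound $c_{z_1}\ge c_{z_0}/2$, extracts $|e_{z_1}-e_{z_2}|$. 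Inserting these into the inequality above yields both Hölder bounds with the asserted exponent $\g$ and a constant $C_0$ that depends on $z_0$ only through $c_{z_0}$. The main technical obstacle is this bi-Lipschitz step: the genuine difficulty is the separation of the amplitude $c$ from the direction $e$, which is why the uniform nondegeneracy supplied by the previous step is indispensable.
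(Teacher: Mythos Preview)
Your argument is correct and is precisely the one the paper has in mind: the paper itself does not give a proof but simply says the lemma follows from Proposition~\ref{prop:par-gap-32-hom-classif} and Lemma~\ref{lem:par-blowup-est} by repeating \cite{GarPetSVG16}*{Lemma~7.5}, and what you have written is exactly that argument spelled out. The only cosmetic point is that your ``testing against $p_{e_{z_1}}$'' heuristic is a little loose as a direct extraction; the clean way (which you also state) is that $(c,e)\mapsto c\,p_e$ is a smooth injective immersion of the compact set $[c_{z_0}/2,2c_{z_0}]\times\partial B'_1$ into $L^1(\partial B_1)$, hence bi-Lipschitz onto its image, and this already yields both estimates at once.
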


We are now ready to prove the central result in this paper, the regularity of the regular set.

\begin{theorem}[Regularity of the regular set]\label{thm:par-Clg-reg-set}
Let $u\in \fs$ be an almost minimizer for the parabolic Signorini problem in $Q_1$. If $z_0=(x_0,t_0)\in \mathcal{R}(u)\cap Q'_{1/4}$, there exists $\rho>0$, depending on $z_0$, such that possibly after a rotation in $\R^{n-1}$, one has $Q'_\rho(z_0)\cap \Ga(u)\subset \mathcal{R}(u)$, and $$
Q'_\rho(z_0)\cap \Ga(u)=\{(x',t)\in Q'_\rho(z_0)\,:\, x_{n-1}=g(x'',t)\},
$$
for a function $g$ with $\D''g\in C^{\g,\g/2}$ for some $\g=\g(n,\al)\in(0,1)$.
\end{theorem}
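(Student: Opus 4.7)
The plan is to combine the classification of blowups at regular points (Proposition~\ref{prop:par-gap-32-hom-classif}) with the Hölder dependence of the blowup parameters $c_z$ and $e_z$ on the base point $z$ (Lemma~\ref{lem:par-blowup-hol}). First I would normalize: fix $z_0\in\mathcal{R}(u)\cap Q'_{1/4}$ and rotate in the $x''$-variables so that the direction $e_{z_0}$ appearing in the blowup profile $u_{z_0,0}(x)=c_{z_0}\Re(x'\cdot e_{z_0}+i|x_n|)^{3/2}$ becomes $e_{n-1}$. By Corollary~\ref{cor:par-reg-set-open} and Lemma~\ref{lem:par-blowup-hol}, there is $\rho_0>0$ such that $Q'_{\rho_0}(z_0)\cap\Gamma(u)\subset\mathcal{R}(u)$, and the map $z\mapsto e_z$ is $\gamma$-Hölder in the parabolic norm with $|e_z-e_{n-1}|<1/8$ throughout that neighborhood; the same holds for $z\mapsto c_z$, which is moreover bounded below away from $0$ by continuity and nondegeneracy (Lemma~\ref{lem:par-nondeg-reg-pt}).

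The heart of the argument is to establish spatial directional monotonicity on the thin space, uniformly in $z\in Q'_{\rho_0}(z_0)\cap\Gamma(u)$. The explicit blowup $c_z\Re(x'\cdot e_z+i|x_n|)^{3/2}$ has $\partial_\tau u_{z,0}>0$ in the open half-space $\{x'\cdot e_z>0\}$ for every unit $\tau\in\R^{n-1}$ with $\tau\cdot e_z\ge 3/4$, with a quantitative lower bound depending only on $c_z$. I would use the $C^{1,0}_{\loc}(S_\infty^\pm\cup S'_\infty)$ convergence of rescalings $u_{z,r}\to u_{z,0}$ (from Section~\ref{sec:par-32-homog-blow}) together with the uniform quantitative rate from Lemma~\ref{lem:par-blowup-rot-est} and the uniform parabolic Hölder bound for $\nabla u$ from Theorem~\ref{thm:par-alm-min-grad-holder}, to produce a single scale $r_*>0$ and a cone $\mathcal{C}$ of spatial directions around $e_{n-1}$ such that for every $z\in Q'_{\rho_0}(z_0)\cap\Gamma(u)$ and every $\tau\in\mathcal{C}$, $\partial_\tau u\ge 0$ on the thin part of $Q_{r_*}(z)$. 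This is where I expect the main obstacle: the rate in Lemma~\ref{lem:par-blowup-rot-est} controls only weighted $L^1$ averages, so upgrading to pointwise gradient positivity uniformly in $z$ requires interpolating with the uniform $C^{1,\beta}$-estimates and a compactness/contradiction argument in the base point $z$.

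Once uniform directional monotonicity is in place, a standard argument identifies $\Gamma(u)\cap Q'_\rho(z_0)$ with a Lipschitz graph $\{x_{n-1}=g(x'',t)\}$ for $\rho<\rho_0$ small enough, since the coincidence set $\{u(\cdot,0,\cdot)=0\}$ on the thin space is then a subgraph in the $e_{n-1}$-direction whose complement at every time slice is star-shaped with respect to the cone $\mathcal{C}$. For the Hölder regularity of $\nabla''g$, I would observe that at each free boundary point $z=(x'',g(x'',t),0,t)$, the inward spatial normal of the graph inside the thin space coincides with $e_z$, so $\nabla''g(x'',t)=-(e_z)''/(e_z)_{n-1}$ is a smooth function of $e_z$ (the denominator being $>7/8$ by Step~1). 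The parabolic Hölder continuity of $z\mapsto e_z$ from Lemma~\ref{lem:par-blowup-hol} (exponent $\gamma$ with respect to the parabolic norm $\|\cdot\|$) then transfers, via this smooth dependence, to $\nabla''g\in C^{\gamma,\gamma/2}$, completing the proof.
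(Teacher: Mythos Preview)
Your overall outline matches the paper's: uniform $C^{1,0}$ convergence of rescalings to blowups via a compactness argument combining Lemmas~\ref{lem:par-blowup-rot-est} and~\ref{lem:par-blowup-hol}, a cone argument on the thin space, and then Hölder regularity of $\nabla''g$ from the Hölder dependence of $z\mapsto e_z$.

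The gap is in your mechanism for the cone step. You propose to obtain $\partial_\tau u\ge 0$ on the thin part of $Q_{r_*}(z)$ from $C^{1,0}$ convergence to the blowup, but this fails: on the thin space the blowup satisfies only the \emph{non-strict} inequality $\partial_\tau u_{z,0}\ge 0$, with equality throughout the half-space $\{x'\cdot e_z\le 0\}$, and a non-strict inequality does not pass to $C^1$-close functions. Your compactness sketch does not close either: a sequence $(\tilde y_j,\tilde s_j)$ with $\partial_{\tau_j}u_{z_j,r_j}(\tilde y_j,\tilde s_j)<0$ may accumulate at a point of the blowup's coincidence set where $\partial_\tau u_{z,0}=0$, yielding no contradiction. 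For \emph{solutions} one recovers tangential monotonicity from the PDE satisfied by $\partial_\tau u$ (as in \cite{DanGarPetTo17}), but almost minimizers satisfy no equation.

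The paper (following \cite{JeoPet21}*{Theorem~9.7}) bypasses this by proving instead the set inclusions
\[
\bar x+\bigl(\mathcal{C}_\e(e_{\bar z})\cap B'_{r_\e}\bigr)\subset\{u(\cdot,0,\bar t)>0\},\qquad
\bar x-\bigl(\mathcal{C}_\e(e_{\bar z})\cap B'_{r_\e}\bigr)\subset\{u(\cdot,0,\bar t)=0\}.
\]
The first follows from the \emph{strict} inequality $u_{z,0}>0$ on $\{x'\cdot e_z>0,\ x_n=0\}$; the second from the strict inequality $\partial_{x_n}^+u_{z,0}<0$ on $\{x'\cdot e_z<0,\ x_n=0\}$ together with the complementarity condition (Lemma~\ref{lem:par-compl-cond}), which forces $u=0$ wherever $\partial_{x_n}^+u<0$. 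Both strict inequalities are bounded away from zero on annuli, hence transfer under the uniform $C^{1,0}$ convergence you have already set up, and the cone inclusions at every scale give the Lipschitz graph structure directly, without tangential monotonicity of $u$.
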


\begin{proof}
Since the proof of this theorem follows the lines of \cite{JeoPet21}*{Theorem~9.7}, we shall provide only the outline of the proof.

Since $\mathcal{R}(u)$ is relatively open in $\Ga(u)$, we have $Q'_{2\rho}(z_0)\cap \Ga(u)\subset \mathcal{R}(u)$ for small $\rho>0$. We claim that for any $\e>0$, there exists $r_\e>0$ such that for any $\olz\in Q'_\rho(z_0)\cap \Ga(u)$ and $0<r<r_\e$, there holds 
\begin{align}\label{eq:u-rescal-conv}
\|u_{\olz,r}-u_{\olz,0}\|_{C^{1,0}(\overline{Q_1^\pm})}<\e.
\end{align}
Indeed, towards a contradiction, suppose there are sequences $r_j\to 0$ and $\olz_j\in Q'_\rho(z_0)\cap \Ga(u)$ such that for some $\e_0>0$
$$
\|u_{\olz_j,r_j}-u_{\olz_j,0}\|_{C^{1,0}(\overline{Q_1^\pm})}\ge \e_0.
$$ 
Up to a subsequence, we have $\olz_j\to \olz_0\in \overline{Q'_\rho(z_0)}\cap \Ga(u)$. We can argue as in the beginning of Section~\ref{sec:par-32-homog-blow} to deduce that over another subsequence
\begin{align}\label{eq:u-rescal-conv-2}
u_{\olz_j,r_j}\to w\quad\text{in }C^{1,0}(\overline{Q_1^\pm})
\end{align}
for some $w\in C^{1,0}(\overline{Q_1^\pm})$.
Moreover, we have by Lemma~\ref{lem:par-blowup-rot-est} that for any $s\in(-1,0)$
$$
u_{\olz_j,r_j}-u_{\olz_j,0}\to 0\quad\text{in }L^1(B_1\times(-1,s)),
$$
which implies by using Cantor's diagonal argument
$$
u_{\olz_j,r_j}-u_{\olz_j,0}\to 0\quad\text{a.e. in }Q_1.
$$
On the other hand, from Lemma~\ref{lem:par-blowup-hol}, we find
$$
u_{\olz_j,0}\to u_{\olz_0,0}\quad\text{in }C^{1,0}(\overline{Q_1^\pm}).
$$
The previous two convergences, combined with \eqref{eq:u-rescal-conv-2}, imply $w=u_{\olz_0,0}$ and contradict our assumption.

Next, for a given $\e>0$ and a unit vector $e\in \R^{n-1}$, define the cone 
$$
\mathcal{C}_\e(e)=\{x'\in\R^{n-1}\,:\, x'\cdot e>\e|x'|\}.
$$
By utilizing Lemma~\ref{lem:par-blowup-hol}, the estimate \eqref{eq:u-rescal-conv} and the complementarity condition (Lemma~\ref{lem:par-compl-cond}), we can follows Steps 2-3 in the proof of \cite{JeoPet21}*{Theorem~9.7} to obtain the following: for any $\e>0$, there is $r_\e>0$ such that for any $\olz=(\olx,\olt)\in Q'_\rho(z_0)\cap \Ga(u)$, we have 
\begin{align*}
&\olx+\left(\mathcal{C}_\e(e_{\olz})\cap B'_{r_\e}\right)\subset \{u(\cdot,0,\olt)>0\},\\
&\olx-\left(\mathcal{C}_\e(e_{\olz})\cap B'_{r_\e}\right)\subset \{u(\cdot,0,\olt)=0\}.
\end{align*}
Finally, by using these inclusions and Lemma~\ref{lem:par-blowup-hol}, we can repeat the arguments in Steps 4-5 in \cite{JeoPet21}*{Theorem~9.7} to conclude the theorem.
\end{proof}

%%%%%%%%%%%%%%%%%%%%%%%%%%%%%%%%%%%%%%%%%%%%%%%%%%%

\appendix

\section{Existence of weak solutions}\label{sec:appen-exist-sol}

In this section, we prove the existence and uniqueness of the weak solution to the parabolic Signorini problem in $S_1$, provided that the initial datum belongs to $W^2_\infty$.

\begin{theorem}\label{thm:exist-weak-sol}
If $\vp_0\in W^2_\infty(\R^n)$, then there exists a unique weak solution of 
\begin{align}\label{eq:weak-sol}\begin{cases}
& \pa_tv-\La v=0\quad\text{in } S^+_1\cup S^-_1,\\
& v\ge 0, \quad \pa_{\nu^+}v+\pa_{\nu^-}v\ge 0,\quad v(\pa_{\nu^+}v+\pa_{\nu^-}v)=0\quad\text{on } S_1',\\
&v(\cdot,-1)=\vp_0\quad\text{on }\R^n,
\end{cases}
\end{align}
where $\nu^{\pm}$ is the outer unit normal to $S_1^\pm$ on $S_1'$.
\end{theorem}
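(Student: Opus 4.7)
The plan is to use a penalization method combined with truncation to bounded domains, and then pass to the limit in two stages. For $\e>0$ and $R>1$, I would first introduce the penalized problem on $B_R\times(-1,0]$:
\begin{align*}
\pa_t v^R_\e-\La v^R_\e &= 0 \quad\text{in }(B_R\setminus B'_R)\times(-1,0],\\
\pa_{\nu^+} v^R_\e+\pa_{\nu^-} v^R_\e &= \tfrac{1}{\e}(v^R_\e)_- \quad\text{on }B'_R\times(-1,0],\\
v^R_\e &= \vp_0 \quad\text{on }(\pa B_R)\times(-1,0],\\
v^R_\e(\cdot,-1) &= \vp_0 \quad\text{on }B_R,
\end{align*}
where $(s)_-=\min(s,0)$. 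Existence and uniqueness of $v^R_\e$ follow from standard monotone operator / Galerkin theory, because the penalty concentrated on the thin space is Lipschitz and monotone.

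Next I would establish uniform-in-$(R,\e)$ a priori estimates exploiting the hypothesis $\vp_0\in W^2_\infty(\R^n)$. The key bounds I would prove are: (i) $\|v^R_\e\|_{L^\infty}\le\|\vp_0\|_{L^\infty}$ by the maximum principle, since the penalty has the correct sign; (ii) a uniform $L^2$-bound on $\pa_t v^R_\e$, obtained by differentiating the equation in $t$ and testing with $\pa_t v^R_\e$, using $\La\vp_0\in L^\infty$ to control the initial time derivative; (iii) local-in-space Caccioppoli bounds on $\D v^R_\e$ coming from (i); and (iv) the penalty control $\frac{1}{\e}\|(v^R_\e)_-\|^2_{L^2(B'_R\times(-1,0])}\le C$.

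With these estimates in place, I would send $\e\to 0$ on each fixed $B_R$: Aubin--Lions compactness gives strong $L^2$ convergence along a subsequence, (iv) forces $v^R\ge 0$ on $B'_R\times(-1,0]$, and a standard penalty-method argument yields the complementarity condition, producing a weak Signorini solution $v^R$ on $Q_R$ with lateral and initial data $\vp_0$. Then I would send $R\to\infty$ by diagonal extraction, using the uniform $L^\infty$, local $W^{1,0}_2$, and time-derivative bounds to pass to the limit variational inequality and obtain the desired weak solution $v$ in $S_1$. Uniqueness is the easier part: given two weak solutions $v_1,v_2$, using each as a competitor in the other's variational inequality and subtracting gives $\frac{1}{2}\frac{d}{dt}\|v_1-v_2\|^2_{L^2(\R^n)}+\|\D(v_1-v_2)\|^2_{L^2(\R^n)}\le 0$, which together with $v_1(\cdot,-1)=v_2(\cdot,-1)=\vp_0$ forces $v_1\equiv v_2$.

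The main obstacle is the unbounded spatial domain together with the lack of decay of $\vp_0$ (functions in $W^2_\infty$ need not vanish at infinity), which precludes global $L^2$-energy methods. This forces the truncation step and dictates the choice of estimates: either the $L^\infty$ plus local-$W^{1,0}_2$ flavor sketched above, or weighted norms built from $G$, which are natural in the framework of the paper. The $W^2_\infty$ hypothesis on $\vp_0$ is essential precisely for the time-derivative estimate (ii); without it, one cannot make sense of the variational inequality in the limit, and a further approximation of the data would be required.
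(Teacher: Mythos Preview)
Your penalization-plus-truncation scheme is a classical route, but the paper proceeds very differently and far more economically. Via the self-similar change of variables $\wv(y,\tau)=v(2e^{-\tau/2}y,-e^{-\tau})$, problem \eqref{eq:weak-sol} becomes a parabolic variational inequality on $\R^n\times(0,\infty)$ with the Gaussian weight $e^{-|y|^2}$, namely
\[
\int_{\R^n}\Big(\pa_\tau\wv\,(w-\wv)+\tfrac14\,\D\wv\cdot\D(w-\wv)\Big)e^{-|y|^2}\,dy\ge0
\]
for admissible competitors $w$. The bilinear form $a(v,v)=\tfrac14\int_{\R^n}|\D v|^2e^{-|y|^2}$ is coercive on $W^{1,2}(\R^n,e^{-|y|^2})$ after adding a multiple of the $L^2(e^{-|y|^2})$ norm, so existence and uniqueness follow in one stroke from the abstract Lions--Stampacchia type result in Duvaut--Lions. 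No penalization, truncation, or compactness extraction is needed: the change of variables simultaneously turns the finite time interval into a half-line and produces a weight that tames the unbounded spatial domain.

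Beyond the difference in method, your uniqueness step has a genuine gap. The inequality $\tfrac12\tfrac{d}{dt}\|v_1-v_2\|^2_{L^2(\R^n)}+\|\D(v_1-v_2)\|^2_{L^2(\R^n)}\le0$ presumes that $v_1-v_2\in L^2(\R^n)$ and that each $v_i$ admits the other as a competitor in an \emph{unweighted} variational inequality. Neither is justified: as you yourself note for the existence part, $\vp_0\in W^2_\infty$ carries no decay, and parabolic problems on $\R^n$ are not uniquely solvable without growth restrictions (Tychonov). The class in which the theorem is meant---and in which the parabolic Signorini replacements are actually used throughout the paper---is the Gaussian-weighted one, $L^2(-1,0;W^{1,2}(\R^n,G))$; under the self-similar change of variables this is exactly $L^2(0,\infty;W^{1,2}(\R^n,e^{-|y|^2}))$, and it is there that Duvaut--Lions delivers both existence and uniqueness. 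If you insist on a penalization approach, you must run the entire argument in these weighted spaces from the start (in which case the spatial truncation to $B_R$ becomes unnecessary, since the Gaussian weight already supplies the needed compactness), and your uniqueness argument must likewise be carried out in the weighted norm.
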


\begin{proof}
For the change of coordinates 
$$
\wv(y,\tau):=v\left(2e^{-\frac\tau2}y,-e^{-\tau}\right), \quad (y,\tau)\in \R^n\times [0,\infty),
$$
\eqref{eq:weak-sol} is equivalent to
\begin{align*}\begin{cases}
& \pa_\tau\tilde v+\frac y2\cdot\D\tilde v-\frac14\La \tilde v=0\quad\text{in } (\R^n_+\cup \R^n_-)\times(0,\infty),\\
& \tilde v\ge 0, \quad \pa_{\nu^+}\tilde v+\pa_{\nu^-}\tilde v \ge 0,\quad \tilde v(\pa_{\nu^+}\tilde v+\pa_{\nu^-}\tilde v)=0\quad\text{on }\R^{n-1}\times(0,\infty) ,\\
&\tilde v(\cdot,0)=\widetilde{\vp}_0\quad\text{on }\R^n,
\end{cases}
\end{align*}
where $\widetilde{\vp}_0(y)=\vp_0(2y)$. Note that $\tilde v$ is a weak solution of the above equation if and only if it satisfies for a.e. $\tau\in (0,\infty)$ the variational inequality \begin{align*}
    \int_{\R^n}\pa_\tau \tilde v(w-\tilde v)e^{-|y|^2}+\frac{y}2\cdot\D \tilde v(w-\tilde v)e^{-|y|^2}+\frac14\D \tilde v\cdot \D\left((w-\tilde v)e^{-|y|^2}\right)\ge 0,
\end{align*}
which is equivalent to  
\begin{align*}
    \int_{\R^n}\pa_\tau \tilde v(w-\tilde v)e^{-|y|^2}+\frac14\D \tilde v\cdot\D(w-\tilde v)e^{-|y|^2}\ge 0,
\end{align*}
for any $w\in L^2(0,\infty; W^{1,2}(\R^n, e^{-|y|^2}))$ with $w=\widetilde{\vp}_0$ on $\R^n\times\{0\}$, $w\ge 0$ on $\R^{n-1}\times(0,\infty)$ and $w-\tilde v\in L^2(0,\infty;W^{1,2}_0(\R^n,e^{-|y|^2}))$. In addition, for $a(v,v):=\frac14\int_{\R^n}\D v\cdot\D ve^{-|y|^2}\,dy$, the coercivity $$
a(v,v)+C\int_{\R^n}v^2e^{-|y|^2}\ge \al\int_{\R^n}(|\D v|^2+v^2)e^{-|y|^2}
$$
is satisfied. Therefore, the existence and the uniqueness of the weak solution $\tilde v$ follow from \cite{DuvLio76}*{Chapter~1, Theorem 5.1}.
\end{proof}

\section{Examples of almost minimizers}\label{sec:appen-ex}

In this section, we provide examples of solutions to certain equations that satisfy almost parabolic Signorini properties, both the unweighted and the weighted versions. These examples rely on the following technical lemma. For $\e\in(0,1)$, we write $Q_{r,\rho}^\e(z_0):=B_{r^\e}(x_0)\times(t_0-r^2,t_0-\rho^2]$.

\begin{lemma}\label{lem:par-ex-unweight-property}
For $\e=1/3$ and a point $z_0=(x_0,t_0)\in Q'_1$, suppose that a function $u\in W^{1,1}_2(Q_1)\cap L^2(-1,t_0;W^{1,2}(B_1,G_{z_0}))$ satisfies the following property: for any $Q_{r,\rho}^\e(z_0)\Subset Q_{1/2}$, and $v\in L^2(t_0-r^2,t_0-\rho^2;W^{1,2}(B_{r^\e}(x_0),G_{z_0}))$ with $v\ge0$ on $Q_{r,\rho}^\e(z_0)\cap Q_1'$ and $v-u\in L^2(t_0-r^2,r_0-\rho^2;W^{1,2}_0(B_{r^\e}(x_0),G_{z_0}))$
\begin{multline}\label{eq:par-ex-unweight-property}
    \int_{Q_{r,\rho}^\e(z_0)}\left((1-Cr^{\e\al})(t_0-t)|\D u|^2+((x_0-x)\cdot\D u+2(t_0-t)\pa_tu)(u-v)\right)G_{z_0}\\
    \le \int_{Q_{r,\rho}^\e(z_0)}\left((1+Cr^{\e\al})(t_0-t)|\D v|^2+Cr^{\e\al}\frac{|x_0-x|^2}{t_0-t}(u-v)^2\right)G_{z_0},
\end{multline}
where $C>0$ are constants, independent of $z_0$, $\rho$ and $r$. Let $\psi\in C_0^\infty(\R^n)$ be a cutoff function satisfying 
\begin{align*}
    &0\le\psi\le1,\quad \psi=1\quad\text{on }B_{1/2},\quad \supp\psi\subset B_1.
\end{align*}
Then there exists a constant $r_0>0$ such that $\wu:=u\psi$ satisfies the weighted almost parabolic Signorini property \eqref{eq:par-alm-min-def} at $z_0$ for $0\le \rho<r<r_0$, with a gauge function $\eta(r)=Cr^{\al/3}$.
\end{lemma}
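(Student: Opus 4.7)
The plan is to transfer the hypothesized local inequality \eqref{eq:par-ex-unweight-property} on the cylinder $Q_{r,\rho}^\e(z_0)=B_{r^{1/3}}(x_0)\times(t_0-r^2,t_0-\rho^2]$ to the weighted inequality \eqref{eq:par-alm-min-def} for $\wu=u\psi$ on the strip $S_r(t_0)\setminus S_\rho(t_0)$. Two structural observations make this possible. First, since $\psi=1$ on $B_{1/2}$ and $B_{r^{1/3}}(x_0)\subset B_{1/2}$ for $r$ sufficiently small, $\wu=u$ on all of $Q_{r,\rho}^\e(z_0)$. Second, the heat kernel $G_{z_0}$ is super-exponentially small outside $Q_{r,\rho}^\e(z_0)$: for $|x-x_0|\ge r^{1/3}$ and $t_0-t\le r^2$ one has $G_{z_0}(x,t)\lesssim r^{-n}e^{-1/(16r^{4/3})}\le e^{-1/r}$ for $r$ small, since $4/3>1$. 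Combined with $\operatorname{supp}\D\wu\subset B_1$ and $u\in\fsz$, every integral over the complement of $Q_{r,\rho}^\e(z_0)$ inside $S_r\setminus S_\rho$ will be absorbed into the $\|u\|_{\fsz}^2 e^{-1/r}$ term of the weighted RHS.

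Given a weighted competitor $w$, I would construct the local competitor $v=\tau w+(1-\tau)u$ on $B_{r^{1/3}}(x_0)$, where $\tau\in C_c^\infty(B_{r^{1/3}}(x_0))$ is radial with $\tau=1$ on $B_{r^{1/3}/2}(x_0)$ and $|\D\tau|\le Cr^{-1/3}$. Then $v-u=\tau(w-u)\in W^{1,2}_0(B_{r^{1/3}}(x_0))$ at each time and $v\ge 0$ on the thin space, so $v$ is admissible in \eqref{eq:par-ex-unweight-property}, with $u-v=\tau(u-w)$. The crucial asymmetric Young estimate
\[
|\D v|^2\le (1+r^{\e\al})\tau^2|\D w|^2+Cr^{-\e\al}\bigl[(1-\tau)^2|\D u|^2+(w-u)^2|\D\tau|^2\bigr]
\]
keeps $|\D w|^2$ only in the leading term with coefficient $\le 1+r^{\e\al}$ (using $\tau^2\le 1$); after integration over $Q_{r,\rho}^\e(z_0)$ this matches the $(1+\eta(r))\int_{S_r\setminus S_\rho}(t_0-t)|\D w|^2 G_{z_0}$ factor on the weighted RHS with $\eta(r)=Cr^{\al/3}$.

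The remaining error terms live on the annulus $A=B_{r^{1/3}}(x_0)\setminus B_{r^{1/3}/2}(x_0)$, where $G_{z_0}\le e^{-c/r^{4/3}}$, or in the remote region. The $(1-\tau)^2|\D u|^2$ integral on $A$ is absorbed into $\|u\|_{\fsz}^2 e^{-1/r}$ since the polynomial loss $r^{-\e\al}$ is dominated by the Gaussian decay and $\D u\in L^2$ locally. For the cutoff-derivative piece,
\[
Cr^{-\e\al}\int_A(t_0-t)(w-u)^2|\D\tau|^2 G_{z_0}\le Cr^{2-2\e-\e\al}\int_{S_r\setminus S_\rho}(w-\wu)^2 G_{z_0}=Cr^{4/3-\al/3}\int(w-\wu)^2 G_{z_0}\le \eta(r)\int(w-\wu)^2 G_{z_0},
\]
since $4/3-\al/3>\al/3$ for $\al<1$. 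The lower-order mismatch $(u-w)-(u-v)=(1-\tau)(u-w)$ on the LHS is likewise supported on $A$ and handled by Gaussian decay and Cauchy-Schwarz.

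The delicate term will be the local-inequality error $Cr^{\e\al}\int_{Q_{r,\rho}^\e}\frac{|x-x_0|^2}{t_0-t}(u-v)^2 G_{z_0}$ from the local RHS. I would handle it via the Gaussian moment identity
\[
\int\frac{|x-x_0|^2}{t_0-t}f^2 G_{z_0}\,dx=2n\int f^2 G_{z_0}\,dx+4\int f(x-x_0)\cdot\D f\,G_{z_0}\,dx
\]
with $f=u-v$, followed by Cauchy-Schwarz, to obtain the Gaussian Poincar\'e bound
\[
\int\frac{|x-x_0|^2}{t_0-t}(u-v)^2 G_{z_0}\le C\int(u-v)^2 G_{z_0}+C\int(t_0-t)|\D(u-v)|^2 G_{z_0};
\]
both pieces then reduce to quantities already controlled above, at the cost of enlarging the constant in $\eta(r)$. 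The main obstacle is the careful choreography of the three small parameters $r^{\e\al}$ (from Young), $r^{-\e\al}$ (from error amplifications), and $e^{-c/r^{4/3}}$ (from Gaussian decay on $A$), arranged so that no $|\D w|^2$ integral ever appears on the annulus with a large coefficient. The choice $\e=1/3$ is precisely what makes this balance possible: $r^{1/3}$ is far beyond the Gaussian scale $\sqrt r$, yet $r^{2-2\e-\e\al}\le r^{\al/3}$ still holds for small $r$.
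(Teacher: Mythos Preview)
Your proposal is correct and follows the same overall architecture as the paper: the competitor $v=\tau w+(1-\tau)u$ with a dilated cutoff $\tau$ supported in $B_{r^{1/3}}(x_0)$ is exactly the paper's choice $v=u+\psi_r(w-\wu)$ (since $u=\wu$ on that ball); the exponential Gaussian decay on the annulus $|x-x_0|\ge r^{1/3}/2$ absorbs all ``remote'' contributions into $\|u\|_{\fsz}^2 e^{-1/r}$, just as in the paper's Steps~2--3; and your Gaussian moment / integration-by-parts treatment of the $\frac{|x-x_0|^2}{t_0-t}(u-v)^2$ term is identical to the paper's Step~5.

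The one genuine difference is in the cutoff-derivative error $\int_A (t_0-t)|\D\tau|^2(w-\wu)^2 G_{z_0}$. You bound it directly by $(t_0-t)|\D\tau|^2\le r^2\cdot Cr^{-2/3}=Cr^{4/3}$, yielding $Cr^{4/3-\al/3}\int(\wu-w)^2 G_{z_0}\le \eta(r)\int(\wu-w)^2 G_{z_0}$. The paper instead drops the factor $(t_0-t)\le 1$ and then invokes a Log-Sobolev inequality \cite{EdqPet08} to recover a bound of the form $Cr^{4-6\e}\int(\wu-w)^2 G+Cr^{2-4\e}\int(-t)(|\D w|^2+|\D\wu|^2)G$, which in turn feeds an additional small $|\D w|^2$ contribution back into the main term. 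Your route is strictly simpler for the fixed choice $\e=1/3$: since $2-2\e-\e\al=4/3-\al/3>\al/3$ whenever $\al<2$, the elementary pointwise bound already closes, and Log-Sobolev is unnecessary. The paper's Log-Sobolev estimate would become essential only for larger $\e$ (where $2-2\e$ could fail to dominate), but with $\e=1/3$ fixed your argument is both shorter and avoids introducing any extra $|\D w|^2$ error.
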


\begin{remark}
    \label{rmk:norm-comp}
Since our main objective in this paper is the free boundary $\Ga(u)$, in Lemma~\ref{lem:par-ex-unweight-property}, we can make the assumption that $\|u\|_{W^{1,1}_2(Q_{1/2})}>0$. Otherwise, we have $u\equiv0$ in $Q_{1/2}$ and there is no free boundary on $Q_{1/2}'$. Moreover, the condition \eqref{eq:par-ex-unweight-property} only concerns $u$ within $Q^\e_{r,\rho}(z_0)$ and $Q^\e_{r,\rho}(z_0)\subset Q_{1/2}$, which allows us to freely modify the value of $u$ in $Q_1\setminus Q_{1/2}$. Therefore, we may assume that for some dimensional constant $C>0$
$$
\|u\|_{W^{1,1}_2(Q_1)}\le C\|u\|_{W^{1,1}_2(Q_{1/2})}.
$$
\end{remark}

\begin{proof}
\emph{Step 1.} Without loss of generality, we may assume $z_0=0$. \eqref{eq:par-ex-unweight-property} can be rewritten as \begin{align}
    \label{eq:par-ex-unweight-property-rewrite}
    (1-Cr^{\e\al})I+II\le (1+Cr^{\e\al})III+Cr^{\e\al}IV,
\end{align}
where \begin{align*}
    &I=\int_{Q^\e_{r,\rho}}(-t)|\D u|^2G,\quad II=\int_{Q^\e_{r,\rho}}(-x\cdot \D u-2t\pa_tu)(u-v)G,\\
    &III=\int_{Q^\e_{r,\rho}}(-t)|\D v|^2G,\quad IV=\int_{Q^\e_{r,\rho}}\frac{|x|^2}{-t}(u-v)^2G.\\
\end{align*}
For $0\le\rho<r<1$, let $w\in L^2(-r^2,-\rho^2;W^{1,2}(\R^n,G))$ with $w\ge 0$ on $S'_r\sm S'_\rho$ and $\tilde u-w\in L^2(-r^2,-\rho^2;W^{1,2}_0(\R^n,G))$. By approximation, we may assume that $w$ has a bounded support. We consider dilations of $\psi$
$$
\psi_r(x)=\psi_{r,\e}(x):=\psi\left(\frac x{r^\e}\right),
$$
and define
\begin{align}\label{eq:comp-def}
v(x,t):=u(x,t)+\psi_r(x)(w(x,t)-\wu(x,t)),\quad(x,t)\in B_{r^\e}\times(-r^2,-\rho^2).
\end{align}
Then $v-u\in L^2(-r^2,-\rho^2;W^{1,2}_0(B_{r^\e},G))$ and $v=u+w-\wu=w\ge 0$ on $Q^\e_{r,\rho}\cap Q'_1$. Thus $v$ is a valid competitor for $u$, and hence \eqref{eq:par-ex-unweight-property-rewrite} holds for such $v$. In the below we estimate and rewrite $I$, $II$, $III$ and $IV$ in terms of $\wu$ and $w$.

\medskip\noindent\emph{Step 2.} We first deal with $I$. We compute
\begin{align}\label{eq:par-ex-unweight-property-rewrite-I}\begin{split}
    &\int_{S_r\sm S_\rho}(-t)|\D\wu|^2G= \int_{B_1\times(-r^2,-\rho^2)}(-t)|\psi\D u+u\D\psi|^2G\\
    &=\int_{B_1\times(-r^2,-\rho^2)}(-t)|\D u|^2G\\
    &\qquad+\int_{(B_1\sm B_{1/2})\times(-r^2,-\rho^2)}(-t)\left( (\psi^2-1)|\D u|^2+2u\psi\D u\cdot\D\psi+u^2|\D\psi|^2\right)G\\
    &=\int_{Q^\e_{r,\rho}}(-t)|\D u|^2G+\int_{(B_1\sm B_{r^\e})\times(-r^2,-\rho^2)}(-t)|\D u|^2G\\
    &\qquad+\int_{(B_1\sm B_{1/2})\times(-r^2,-\rho^2)}(-t)\left( (\psi^2-1)|\D u|^2+2u\psi\D u\cdot\D\psi+u^2|\D\psi|^2\right)G.
\end{split}\end{align}
To estimate the last two terms, we claim that for $t\in(-r^2,-\rho^2)$ with $r<r_0=r_0(n)$ small, we have 
\begin{align}\label{eq:G-bound}
G(x,t)\le e^{\frac1{17t}}\,\,\,\text{for }|x|\ge 1/2\quad\text{and}\quad\text G(x,t)\le e^{\frac{1}{32t}r^{2\e}}\,\,\,\text{for }|x|\ge \frac12r^\e.
\end{align}
Indeed, if $|x|\ge 1/2$, then we easily have $$
G(x,t)\le \frac{e^{\frac{|x|^2}{4t}}}{(-t)^{n/2}}\le \frac{e^{\frac1{16t}}}{(-t)^{n/2}}\le e^{\frac1{17t}},
$$
which gives the first estimate. For the second one, we define $$
\zeta_r(s):=\frac{e^{-\frac{r^{2\e}}{32s}}}{s^{n/2}}, \quad 0<s\le r^2.
$$
Then $\zeta_r(r^2)=\frac{e^{-\frac1{32r^{2-2\e}}}}{r^n}<1$ and $\frac{d}{ds}\zeta_r(s)=\frac{ne^{-\frac{r^{2\e}}{32s}}}{2s^{\frac n2+2}}\left(\frac{r^{2\e}}{16n}-s\right)\ge 0$, $0<s\le r^2$, which gives $\zeta_r(s)<1$ for $0<s\le r^2$. Thus, if $|x|\ge \frac12r^\e$ and $t\in(-r^2,0)$, then 
$$
G(x,t)\le \frac{e^{\frac1{16t}r^{2\e}}}{(-t)^{n/2}}=\zeta_r(-t)e^{\frac1{32t}r^{2\e}}\le e^{\frac1{32t}r^{2\e}}.
$$
By using the claim \eqref{eq:G-bound}, we obtain
\begin{align*} &\left|\int_{(B_1\sm B_{1/2})\times(-r^2,-\rho^2)}(-t)\left( (\psi^2-1)|\D u|^2+2u\psi\D u\cdot\D\psi+u^2|\D\psi|^2\right)G\right|\\
&\qquad\le C(n)\int_{(B_1\sm B_{1/2})\times(-r^2,-\rho^2)}(|\D u|^2+u^2)G\\
&\qquad\le C(n)\|u\|^2_{W^{1,1}_2(Q_1)}e^{-\frac1{17r^2}},
\end{align*}
and \begin{align*}
    \int_{(B_1\sm B_{r^\e})\times(-r^2,-\rho^2)}(-t)|\D u|^2G\le C(n)\|u\|^2_{W^{1,1}_2(Q_1)}e^{-\frac1{32r^{2-2\e}}}.
\end{align*}
Combining these estimates with \eqref{eq:par-ex-unweight-property-rewrite-I} yields
\begin{align*}
   I\ge\int_{S_r\sm S_\rho}(-t)|\D\wu|^2G-C(n)\|u\|^2_{W^{1,1}_2(Q_1)}e^{-\frac1{32r^{2-2\e}}}.
\end{align*}

\medskip\noindent\emph{Step 3.} To estimate $II$, we observe that $u=\wu$ in $Q^\e_{r,\rho}$,  $\wu=0$ in $(\R^n\sm B_1)\times(-r^2,-\rho^2)$, and 
\begin{align}
    \label{eq:G-est}
    r^{-\e}G(x,t)\le r^{-\e}e^{\frac1{32t}r^{2\e}}\le r^{-\e}e^{-\frac1{16r^{2-2\e}}}\le e^{-\frac1{32r^{2-2\e}}}
\end{align}
for $(x,t)\in (B_1\sm B_{\frac12 r^\e})\times(-r^2,-\rho^2)$ with $r<r_0$ small. By using \eqref{eq:comp-def}, \eqref{eq:G-est} and Young's inequality, we deduce
\begin{align*}
    II&=\int_{Q^\e_{r,\rho}}\psi_r(-x\cdot\D\wu-2t\pa_t\wu)(\wu-w)G\\
    &=\int_{Q^\e_{r,\rho}}(-x\cdot\D\wu-2t\pa_t\wu)(\wu-w)G\\
    &\qquad+\int_{(B_{r^\e}\sm B_{\frac12r^\e})\times(-r^2,-\rho^2)}(\psi_r-1)(-x\cdot\D\wu-2t\pa_t\wu)(\wu-w)G\\
    &=\int_{S_r\sm S_\rho}(-x\cdot\D\wu-2t\pa_t\wu)(\wu-w)G\\
    &\qquad-\int_{(B_1\sm B_{r^\e})\times(-r^2,-\rho^2)}(-x\cdot\D\wu-2t\pa_t\wu)(\wu-w)G\\
    &\qquad+\int_{(B_{r^\e}\sm B_{\frac12r^\e})\times(-r^2,-\rho^2)}(\psi_r-1)(-x\cdot\D\wu-2t\pa_t\wu)(\wu-w)G\\
    &\ge \int_{S_r\sm S_\rho}(-x\cdot\D\wu-2t\pa_t\wu)(\wu-w)G-r^\e\int_{(B_1\sm B_{\frac12r^\e})\times(-r^2,-\rho^2)}(\wu-w)^2G\\
    &\qquad-r^{-\e}\int_{(B_1\sm B_{\frac12r^\e})\times(-r^2,-\rho^2)}(-x\cdot\D\wu-2t\pa_t\wu)^2G\\
    &\ge \int_{S_r\sm S_\rho}(-x\cdot\D\wu-2t\pa_t\wu)(\wu-w)G-r^\e\int_{S_r\sm S_\rho}(\wu-w)^2G\\
    &\qquad-\|u\|_{W^{1,1}_2(Q_1)}^2e^{-\frac1{33r^{2-2\e}}}.
\end{align*}

\medskip\noindent\emph{Step 4.} Before we estimate $III$, we prove \begin{multline}
    \label{eq:par-alm-min-cutoff-poin}
        \int_{(B_{r^\e}\sm B_{\frac12r^\e})\times(-r^2,-\rho^2)}|\D \psi_r|^2(w-\wu)^2G\\
         \le C(n)r^{4-6\e}\int_{S_r\sm S_\rho}(w-\wu)^2G+C(n)r^{2-4\e}\int_{S_r\sm S_\rho}(-t)(|\D w|^2+|\D\wu|^2)G.
\end{multline}
To this end, we apply the Log-Sobolev Ineuqality. \cite{EdqPet08}*{Lemma~1.2} can be rewritten as (by letting $g(y):=f\left(\frac y{\sqrt{-2t}}\right)$)
\begin{align*}
\log\left(\frac1{\mathcal{A}}\right)\int_{\R^n}g^2G(\cdot,t)\le -4t\int_{\R^n}|\D g|^2G(\cdot,t),\quad\text{where } \mathcal{A}:=\int_{\{|g|>0\}}G(\cdot,t),
\end{align*}
for every $t<0$ and $g\in W^{1,0}_2(\R^n,G(\cdot,t))$. We plug in $g=\pa_{x_i}\psi_r(w(\cdot,t)-\wu(\cdot,t))$, $1\le i\le n$, for each $t\in(-r^2,-\rho^2)$. Then, by using 
$$
\mathcal{A}\le\int_{B_{r^\e}\sm B_{\frac12r^\e}}G(x,t)\,dx\le e^{\frac1{32t}r^{2\e}}\int_{B_{r^\e}\sm B_{\frac12r^\e}}\,dx\le e^{\frac1{32t}r^{2\e}},
$$
where the second inequality holds due to \eqref{eq:G-bound}, we have 
\begin{align*}
    &\int_{\R^n\times\{t\}}|\pa_{x_i}\psi_r|^2(w-\wu)^2G\\
    &\qquad\le \frac{128t^2}{r^{2\e}}\int_{\R^n\times\{t\}}|\D(\pa_{x_i}\psi_r)(w-\wu)+\pa_{x_i}\psi_r\D(w-\wu)|^2G\\
    &\qquad\le \frac{C(n)t^2}{r^{2\e}}\left(\frac1{r^{4\e}}\int_{\R^n\times\{t\}}(w-\wu)^2G+\frac1{r^{2\e}}\int_{\R^n\times\{t\}}(|\D w|^2+|\D\wu|^2)  G\right)\\
    &\qquad=\frac{C(n)t^2}{r^{6\e}}\int_{\R^n\times\{t\}}(w-\wu)^2G+\frac{C(n)(-t)}{r^{4\e}}\int_{\R^n\times\{t\}}(-t)(|\D w|^2+|\D\wu|^2)G.
\end{align*}
By integrating in $t\in(-r^2,-\rho^2)$ and summing for $1\le i\le n$, we derive \eqref{eq:par-alm-min-cutoff-poin}.

Now, by using \eqref{eq:comp-def}, \eqref{eq:G-est}, \eqref{eq:par-alm-min-cutoff-poin} and Young's inequality, we have
 \begin{align*}
        III&=\int_{Q^\e_{r,\rho}}(-t)|\psi_r\D w+\D\psi_r(w-\wu)+\D u-\psi_r\D \wu|^2G\\
        &\le (1+r^\e)\int_{Q^\e_{r,\rho}}(-t)\psi_r^2|\D w|^2G\\
        &\qquad+\left(1+\frac1{r^\e}\right)\int_{\left(B_{r^\e}\sm B_{\frac12r^\e}\right)\times(-r^2,-\rho^2)}|\D\psi_r(w-\wu)+(1-\psi_r)\D u|^2G\\
       &\le (1+r^\e)\int_{S_r\sm S_\rho}(-t)|\D w|^2G\\
       &\qquad+\frac4{r^\e}\int_{\left(B_{r^\e}\sm B_{\frac12r^\e}\right)\times(-r^2,-\rho^2)}(|\D\psi_r(w-\wu)|^2+|\D u|^2)G\\
       &\le (1+r^\e+Cr^{2-5\e})\int_{S_r\sm S_\rho}(-t)|\D w|^2G+Cr^{2-5\e}\int_{S_r\sm S_\rho}(-t)|\D\wu|^2G\\
       &\qquad+Cr^{4-7\e}\int_{S_r\sm S_\rho}(w-\wu)^2G+C\|u\|_{W^{1,1}_2(Q_1)}^2e^{-\frac1{33r^{2-2\e}}}.
\end{align*}

\medskip \noindent\emph{Step 5.} It remains to consider $IV$. By using the equality $\D G=\frac{x}{2t}G$ and applying the integrations by parts and Young's inequality, we get
\begin{align*}
   &\int_{S_r\setminus S_\rho}\frac{|x|^2}{-t}(\wu-w)^2G\\
    &=\int_{S_r\setminus S_\rho}-2x(\wu-w)^2\D G=\int_{S_r\setminus S_\rho}\big[2n(\wu-w)^2+4x\cdot\D(\wu-w)(\wu-w)\big]G\\
    &\le 2n\int_{S_r\setminus S_\rho}(\wu-w)^2G+\frac12\int_{S_r\setminus S_\rho}\frac{|x|^2}{-t}(\wu-w)^2G+C\int_{S_r\setminus S_\rho}(-t)|\D(\wu-w)|^2G.
\end{align*}
This gives
\begin{align*}
   IV\le \int_{S_r\setminus S_\rho}\frac{|x|^2}{-t}(\wu-w)^2G\le C\int_{S_r\setminus S_\rho}(\wu-w)^2G+C\int_{S_r\setminus S_\rho}(-t)|\D(\wu-w)|^2G.
\end{align*}

\medskip\noindent\emph{Step 6.} By combining \eqref{eq:par-ex-unweight-property-rewrite} with the estimates for $I$-$IV$ and recalling $\e=1/3$, we obtain 
\begin{multline*}
    \int_{S_r\setminus S_\rho}\left[(1-Cr^{\al/3})(-t)|\D\tilde u|^2+(-x\cdot\D \tilde u-2t\pa_t\tilde u)(\tilde u-w)\right]G\\
    \le\int_{S_r\setminus S_\rho}\left[(1+Cr^{\al/3})(-t)|\D w|^2+Cr^{\al/3}(\tilde u-w)^2\right]G+C\|u\|_{W^{1,1}_2(Q_1)}e^{-\frac1{34r^{4/3}}}.
\end{multline*}
Finally, since $C\|u\|_{W^{1,1}_2(Q_1)}\le C\|u\|_{W^{1,1}_2(Q_{1/2})}\le C\|\tilde u\|_{W^{1,1}_2(Q_1)}$ by Remark~\ref{rmk:norm-comp}, we have for small $r_0>0$
$$
C\|u\|_{W^{1,1}_2(Q_1)}e^{-\frac1{34r^{4/3}}}\le \|\tilde u\|_{W^{1,1}_2(Q_1)}e^{-\frac1r},\quad 0<r<r_0.
$$
This completes the proof.
\end{proof}

Now we are ready to introduce some examples of almost minimizers, with the help of Lemma~\ref{lem:par-ex-unweight-property}.

\begin{example}\label{ex:par-alm-min-var} Given $0<\al<1$, let $A$ be a variable coefficient matrix satisfying $|A(x,t)-I|\le C(|x|^2+|t|)^{\al/2}$. Let $u\in W^{1,1}_2(Q_1)$ be a solution of the parabolic $A$-Signorini problem in $Q_1$
 \begin{align*}
    -\dv(A\D u)+ \pa_tu=0
    \quad\text{in }Q_1^\pm,\\
    \begin{multlined}u\geq 0,\quad \langle A\D u,\nu^+\rangle+\langle A\D u,\nu^-\rangle\geq 0,\\
     \qquad\qquad\qquad u(\langle A\D u,\nu^+\rangle+\langle A\D u,\nu^-\rangle)=0\quad\text{on }Q_1',
    \end{multlined}
  \end{align*}
where $\nu^\pm=\mp e_n$. We interpret this in the weak sense that $u$ satisfies for a.e. $t\in(-1,0)$ the variational inequality \begin{align}\label{eq:par-ex-var-ineq-stan}
\int_{B_1}\mean{A\D u,\D(u-w)}+\pa_tu(u-w)\le 0,
\end{align}
for any $w\in W^{1,2}(B_1)$ with $w=u$ on $\pa B_1$ and $w\ge 0$ on $B_1'$. Then 
\begin{enumerate}
    \item[(i)] $u$ satisfies the unweighted almost parabolic Signorini property at $0$ with a gauge function $\eta(r)=Cr^\al$.
    \item[(ii)] $\wu=u\psi$ satisfies the weighted almost parabolic Signorini property at $0$ with a gauge function $\eta(r)=Cr^{\al/3}$.
\end{enumerate}
\end{example}

\begin{proof}
We first treat (i). For any $0<r<1$, let $w\in W^{1,0}_2(Q_r)$ be such that $w=u$ on $\pa_pQ_r$ and $w\ge 0$ on $Q_r'$. By extending $w=u$ in $Q_1\sm Q_r$, we get from \eqref{eq:par-ex-var-ineq-stan} that $$
\int_{Q_r}\langle A\D u, \D(u-w)\rangle+\pa_tu(u-w)\le 0.
$$
Thus \begin{align*}
    &\int_{Q_r}|\D u|^2+\pa_tu(u-w)\\
    &\qquad=\int_{Q_r}\langle A\D u,\D(u-w)\rangle+\pa_tu(u-w)\\
    &\qquad\qquad+\int_{Q_r}\mean{\D u,\D w}+\mean{(A-I)\D u,\D w}+\mean{(I-A)\D u,\D u}\\
    &\qquad\le \frac12\int_{Q_r}(|\D u|^2+|\D w|^2)+\frac12\int_{Q_r}\left(r^{-\al}|(A-I)\D u|^2+r^\al|\D w|^2\right)\\
    &\qquad\qquad+\frac12\int_{Q_r}\left(r^{-\al}|(I-A)\D u|^2+r^\al|\D u|^2\right)\\
    &\qquad\le \frac{1+Cr^\al}2\int_{Q_r}(|\D u|^2+|\D w|^2).
\end{align*}
This gives the unweighted almost parabolic Signorini property of $u$ at $0$.

\medskip To prove the weighted property (ii), we observe that $u$ also satisfies for a.e. $t\in (-1,0)$ the following variational inequality \begin{align}
    \label{eq:par-ex-var-ineq-weight}
    \int_{B_1}[(-2t)\mean{A\D u,\D(u-v)}-\mean{x,A\D u}(u-v)+(-2t)\pa_tu(u-v)]G(\cdot,t)\le 0,
\end{align}
for any competitor $v\in W^{1,0}_2(B_1,G)$ with $v=u$ on $\pa B_1$ and $v\ge 0$ on $B'_1$. In fact, this follows by inserting $w=u+(v-u)e^{\frac{|x|^2}{4t}}$  in \eqref{eq:par-ex-var-ineq-stan} and multiplying $\frac{-2t}{(-4\pi t)^{n/2}}$ in both sides. To prove \eqref{eq:par-ex-unweight-property} for $z_0=0$, which readily implies (ii) by Lemma~\ref{lem:par-ex-unweight-property}, we fix $\e=1/3$. Then, for any $0\le\rho<r<1$ and $v\in W^{1,0}_2(Q^\e_{r,\rho},G)$ such that $v-u\in L^2(-r^2,-\rho^2;W^{1,2}_0(B_{r^\e}))$ and $v\ge 0$ on $Q^\e_{r,\rho}\cap Q'_1$, we extend $v=u$ in $(B_1\sm B_{r^\e})\times(-r^2,-\rho^2)$ and use \eqref{eq:par-ex-var-ineq-weight} to obtain
\begin{align*}
    \int_{Q^\e_{r,\rho}}(-2t)\mean{A\D u,\D(u-v)}G-\mean{x,A\D u}(u-v)G+(-2t)\pa_tu(u-v)G\le 0.
\end{align*}
Using $2\D u\cdot \D(u-v)\ge|\D u|^2-|\D v|^2$, $|\D u\cdot\D(u-v)|\le 3/2|\D u|^2+|\D v|^2$, and $|A-I|\le Cr^{\e\al}$ in $Q^\e_{r,\rho}$, we get 
\begin{align*}
    &\int_{Q^\e_{r,\rho}}(-2t)\mean{A\D u,\D(u-v)}G\\
    &\qquad=\int_{Q^\e_{r,\rho}}(-2t)\D u\cdot\D(u-v)G+\int_{Q^\e_{r,\rho}}(-2t)\mean{(A-I)\D u,\D(u-v)}G\\
    &\qquad\ge \left(1-Cr^{\e\al}\right)\int_{Q^\e_{r,\rho}}(-t)|\D u|^2G-\left(1+Cr^{\e\al}\right)\int_{Q^\e_{r,\rho}}(-t)|\D v|^2G.
\end{align*}
Combining the above two estimates yields
\begin{multline*}
    \left(1-Cr^{\e\al}\right)\int_{Q^\e_{r,\rho}}(-t)|\D u|^2G+\int_{Q^\e_{r,\rho}}(-x\cdot\D u-2t\pa_tu)(u-v)G\\
    \le\left(1+Cr^{\e\al}\right)\int_{Q^\e_{r,\rho}}(-t)|\D v|^2G+\int_{Q^\e_{r,\rho}}\mean{x,(A-I)\D u}(u-v)G.
\end{multline*}
Finally, by estimating the last term with Young's inequality
$$
\int_{Q^\e_{r,\rho}}\mean{x,(A-I)\D u}(u-v)G\le Cr^{\e\al}\int_{Q^\e_{r,\rho}}(-t)|\D u|^2G+\frac{|x|^2}{-t}(u-v)^2G,
$$
we conclude \eqref{eq:par-ex-unweight-property} for $z_0=0$.
\end{proof}

\begin{example}\label{ex:par-alm-min-drift}
Let $u$ be a solution of the parabolic Signorini problem for the Laplacian with drift with the velocity field $b\in L^\infty(-1,0\,;\,L^p(B_1))$, $p>n$: \begin{align*}
    -\La u+b(x,t)\cdot\D u+\pa_tu=0\quad&\text{in }Q_1^\pm\\
    -\pa_{x_n}u\ge 0,\,\,\,u\ge 0\,\,\,u\pa_{x_n}u=0\quad&\text{on }Q'_1,
\end{align*}
even in $x_n$-variable. We understand this in the weak sense that $u$ satisfies the variational inequality: for any $-1<t<0$, $$
\int_{B_1\times\{t\}}\D u\cdot\D(v-u)+b(x,t)\cdot\D u(v-u)+\pa_tu(v-u)\ge 0,
$$
for any competitor $v\in W^{1,2}(B_1)$ such that $v\ge 0$ on $B'_1$ and $v=u$ on $\pa B_1$. Then \begin{enumerate}
    \item[(i)] $u$ is an unweighted almost minimizer for the parabolic Signorini problem in $Q_1$ with a gauge function $\eta(r)=Cr^{1-n/p}$.
    \item[(ii)] $\tilde u=u\psi$ is a weighted almost minimizer for the parabolic Signorini problem in $Q'_1$ with a gauge function $\eta(r)=Cr^{\frac13(1-n/p)}$.
\end{enumerate}
\end{example}

\begin{proof}
Since (i) is proved in \cite{JeoPet21}*{Example~A.1} for more general case with variable coefficients, it is sufficient to prove (ii). For this purpose, as in Example~\ref{ex:par-alm-min-var}, we prove \eqref{eq:par-ex-unweight-property} for every $z_0\in Q'_1$. Indeed, without loss of generality we may assume that $z_0=0$. By the similar argument as in Example~\ref{ex:par-alm-min-var}, $u$ also satisfies for a.e. $t\in(-1,0)$ the variational inequality 
\begin{multline*}
\int_{B_1\times\{-t\}}\big[(-2t)\D u\cdot\D(u-v)
+(-x\cdot\D u-2t\pa_tu)(u-v)\\+(-2t)b\cdot\D u(u-v)\big]G\le 0
\end{multline*}
for any $v\in W^{1,2}(B_1,G(\cdot,t))$ with $v=u$ on $\pa B_1$ and $v\ge 0$ on $B'_1$. For $\e=1/3$ and $0\le\rho<r<1$, let $v\in W^{1,0}_2(Q^\e_{r,\rho},G)$ be such that $v-u\in L^2(-r^2,-\rho^2;W^{1,2}_0(B_{r^\e},G))$ and $v\ge 0$ on $Q^\e_{r,\rho}\cap Q'_1$. Extending $v $ to $B_1\times(-r^2,-\rho^2)$ by $v=u$ on $(B_1\sm B_{r^\e})\times(-r^2,-\rho^2)$ and using the above variational inequality, we get 
\begin{align*}
    &\int_{Q^\e_{r,\rho}}\left((-t)|\D u|^2+(-x\cdot\D u-2t\pa_tu)(u-v)\right)G\\
    &\quad\le \int_{Q^\e_{r,\rho}}\left((-t)|\D u|^2+(-2t)\D u\cdot\D(v-u)+(-2t)b\cdot\D u(v-u)\right)G\\
    &\quad=\int_{Q^\e_{r,\rho}}\left(-(-t)|\D u|^2+(-2t)\D u\cdot\D v\right)G+\int_{-r^2}^{-\rho^2}(-2t)\int_{B_{r^\e}}b\cdot\D u(v-u)G\,dxdt\\
    &\quad\le \int_{Q^\e_{r,\rho}}(-t)|\D v|^2G+\int_{-r^2}^{-\rho^2}(-2t)M\|\D uG^{1/2}\|_{L^2(B_{r^\e})}\|(v-u)G^{1/2}\|_{L^{p^*}(B_{r^\e})}\,dt,
\end{align*}
where $M:=\sup\{\|b(\cdot,t)\|_{L^p(B_1)}:-1<t<0\}$ and $p^*=\frac{2p}{p-2}$.
For $\g=1-n/p$, we have by Sobolev's inequality,
\begin{align*}
    \|(v-u)G^{1/2}\|_{L^{p^*}(B_{r^\e})}&\le C_{n,p}r^{\e\g}\|\D((v-u)G^{1/2})\|_{L^2(B_{r^\e})}\\
    &\le Cr^{\e\g}\left(\|\D(v-u)G^{1/2}\|_{L^2(B_{r^\e})}+\|(v-u)\frac xtG^{1/2}\|_{L^2(B_{r^\e})}\right).
\end{align*}
Thus
\begin{align*}
    &\int_{-r^2}^{-\rho^2}(-2t)M\|\D uG^{1/2}\|_{L^2(B_{r^\e})}\|(v-u)G^{1/2}\|_{L^{p^*}(B_{r^\e})}\,dt\\
    &\qquad\begin{multlined}
        \le Cr^{\e\g}\int_{-r^2}^{-\rho^2}(-2t)\|\D uG^{1/2}\|_{L^2(B_{r^\e})}\Big(\|\D(v-u)G^{1/2}\|_{L^2(B_{r^\e})}\\
        +\|(v-u)\frac xtG^{1/2}\|_{L^2(B_{r^\e})}\Big)dt\end{multlined}\\
    &\qquad\le Cr^{\e\g}\int_{Q^\e_{r,\rho}}(-2t)\left(|\D u|^2+|\D v|^2\right)G+Cr^{\e\g}\int_{Q^\e_{r,\rho}}\frac{|x|^2}{(-t)}(v-u)^2G,
\end{align*}
where constants $C>0$ depend only on $n,p$ and $M$.
Therefore, 
\begin{multline*}
    \int_{Q^\e_{r,\rho}}\left(1-Cr^{\e\g}\right)(-t)|\D u|^2G+(-x\cdot\D u-2t\pa_tu)(u-v)G\\
    \le \int_{Q^\e_{r,\rho}}\left(1+Cr^{\e\g}\right)(-t)|\D v|^2G+Cr^{\e\g}\frac{|x|^2}{(-t)}(v-u)^2G.
\end{multline*}
This completes the proof.
\end{proof}

%%%%%%%%%%%%%%%%%%%%%%%%%%%%%%%%%%%%%%%%%%%%%%%%%%%

{\bf Disclosure statement.} The authors report there are no competing interests to declare.

%%%%%%%%%%%%%%%%%%%%%%%%%%%%%%%%%%%%%%%%%%%%%%%%

\begin{bibdiv}
\begin{biblist}

\bib{AryShi24}{article}{
   author={Arya, Vedansh},
   author={Shi, Wenhui},
   title={Optimal regularity for the variable coefficients parabolic Signorini problem},
   pages={67 pp},
   date={2024},
   status={arXiv:2401.13305 preprint},
 }

\bib{AthCafSal08}{article}{
   author={Athanasopoulos, I.},
   author={Caffarelli, L. A.},
   author={Salsa, S.},
   title={The structure of the free boundary for lower dimensional obstacle
   problems},
   journal={Amer. J. Math.},
   volume={130},
   date={2008},
   number={2},
   pages={485--498},
   issn={0002-9327},
   review={\MR{2405165}},
   doi={10.1353/ajm.2008.0016},
}

\bib{CafSalSil08}{article}{
   author={Caffarelli, Luis A.},
   author={Salsa, Sandro},
   author={Silvestre, Luis},
   title={Regularity estimates for the solution and the free boundary of the
   obstacle problem for the fractional Laplacian},
   journal={Invent. Math.},
   volume={171},
   date={2008},
   number={2},
   pages={425--461},
   issn={0020-9910},
   review={\MR{2367025}},
   doi={10.1007/s00222-007-0086-6},
 }

\bib{DanGarPetTo17}{article}{
   author={Danielli, Donatella},
   author={Garofalo, Nicola},
   author={Petrosyan, Arshak},
   author={To, Tung},
   title={Optimal regularity and the free boundary in the parabolic
   Signorini problem},
   journal={Mem. Amer. Math. Soc.},
   volume={249},
   date={2017},
   number={1181},
   pages={v + 103},
   issn={0065-9266},
   isbn={978-1-4704-2547-0},
   isbn={978-1-4704-4129-6},
   review={\MR{3709717}},
   doi={10.1090/memo/1181},
 }

\bib{DuvLio76}{book}{
   author={Duvaut, Georges},
   author={Lions, Jacques-Louis},
   title={Inequalities in mechanics and physics},
   note = {Translated from the French by C. W. John,Grundlehren der Mathematischen Wissenschaften, 219},
   publisher={Springer-Verlag, Berlin-New York},
   date={1976},
   pages={xvi+397},
   isbn={3-540-07327-2},
   review={\MR{0521262}},
}

\bib{EdqPet08}{article}{
   author={Edquist, Anders},
   author={Petrosyan, Arshak},
   title={A parabolic almost monotonicity formula},
   journal={Math. Ann.},
   volume={341},
   date={2008},
   number={2},
   pages={429--454},
   issn={0025-5831},
   review={\MR{2385663}},
}

\bib{GarPet09}{article}{
   author={Garofalo, Nicola},
   author={Petrosyan, Arshak},
   title={Some new monotonicity formulas and the singular set in the lower
   dimensional obstacle problem},
   journal={Invent. Math.},
   volume={177},
   date={2009},
   number={2},
   pages={415--461},
   issn={0020-9910},
   review={\MR{2511747}},
   doi={10.1007/s00222-009-0188-4},
}

\bib{GarPetSVG16}{article}{
   author={Garofalo, Nicola},
   author={Petrosyan, Arshak},
   author={{Smit Vega Garcia}, Mariana},
   title={An epiperimetric inequality approach to the regularity of the free
   boundary in the Signorini problem with variable coefficients},
   language={English, with English and French summaries},
   journal={J. Math. Pures Appl. (9)},
   volume={105},
   date={2016},
   number={6},
   pages={745--787},
   issn={0021-7824},
   review={\MR{3491531}},
   doi={10.1016/j.matpur.2015.11.013},
}

\bib{JeoPet21}{article}{
   author={Jeon, Seongmin},
   author={Petrosyan, Arshak},
   title={Almost minimizers for the thin obstacle problem},
   journal={Calc. Var. Partial Differential Equations},
   volume={60},
   date={2021},
   pages={Paper No. 124, 59},
   issn={0944-2669},
   review={\MR{4277328}},
   doi={10.1007/s00526-021-01986-8},
 }

\bib{JeoPet23}{article}{
   author={Jeon, Seongmin},
   author={Petrosyan, Arshak},
   title={Regularity of almost minimizers for the parabolic thin obstacle problem},
   journal={Nonlinear Anal.},
   volume={237},
   date={2023},
   pages={Paper No. 113386, 29},
   issn={0362-546X},
   review={\MR{4645649}},
   doi={10.1016/j.na.2023.113386},
 }

\bib{JeoPetSVG24}{article}{
   author={Jeon, Seongmin},
   author={Petrosyan, Arshak},
   author={{Smit Vega Garcia}, Mariana},
   title={Almost minimizers for the thin obstacle problem with variable coefficients},
   journal={Interfaces Free Bound.},
   volume={},
   date={2024},
   pages={},
   issn={ },
   review={ },
   doi={10.4171/IFB/507},
 }

\bib{PetShaUra12}{book}{
   author={Petrosyan, Arshak},
   author={Shahgholian, Henrik},
   author={Uraltseva, Nina},
   title={Regularity of free boundaries in obstacle-type problems},
   series={Graduate Studies in Mathematics},
   volume={136},
   publisher={American Mathematical Society, Providence, RI},
   date={2012},
   pages={x+221},
   isbn={978-0-8218-8794-3},
   review={\MR{2962060}},
   doi={10.1090/gsm/136},
 }

\bib{Poo96}{article}{
 author={Poon, Chi-Cheung},
 title={Unique continuation for parabolic equations},
 journal={Comm. Partial Differential Equations}, 
 volume={21},
 date={1996},
 pages={521--539},
 issn={0360-5302},
 review={\MR{1387458}},
 doi={10.1080/03605309608821195},
 }

 \bib{Shi20}{article}{
 author={Shi, Wenhui},
 title={An epiperimetric inequality approach to the parabolic Signorini problem},
 journal={Disc. Cont. Dyn. Syst.}, 
 volume={40},
 date={2020},
 pages={1813--1846},
 issn={1078-0947},
 review={\MR{4063946}},
 doi={10.3934/dcds.2020095},
 }

\bib{Wei99}{article}{
   author={Weiss, Georg S.},
   title={A homogeneity improvement approach to the obstacle problem},
   journal={Invent. Math.},
   volume={138},
   date={1999},
   number={1},
   pages={23--50},
   issn={0020-9910},
   review={\MR{1714335}},
   doi={10.1007/s002220050340},
 }

\end{biblist}
\end{bibdiv}

\end{document}